\newtheorem{theorem}{Theorem}[section]      	      	                        
\newtheorem{corollary}[theorem]{Corollary}     	      	      	      	      
\newtheorem{lemma}[theorem]{Lemma}     	       	      	      	      	      
\newtheorem{proposition}[theorem]{Proposition} 	      	      	      	      
\newtheorem{definition}[theorem]{Definition}   	      	                      
\newtheorem*{problem}{Problem}   	      	                                    
\newtheorem*{setting}{Setting}   	      	                                    
\theoremstyle{remark}
\newtheorem{remark}[theorem]{Remark}
\numberwithin{equation}{section}                                              
\numberwithin{theorem}{section}                                               
\numberwithin{figure}{section}                                                
\newcommand{\mf}[1]{\mathfrak{#1}}                                            
\newcommand{\mc}[1]{\mathcal{#1}}                                             
\newcommand{\N}{\mathbb{N}}                                                   
\newcommand{\R}{\mathbb{R}}                                                   
\newcommand{\Sph}{\mathbb{S}}                                                 
\newcommand{\snabla}{\slashed{\nabla}}                                        
\newcommand{\sDe}{\slashed{\Delta}}                                           
\newcommand{\pd}{\partial}                                                    
\newcommand{\nab}{\nabla}                                                     
\newcommand{\be}{\beta}
\newcommand{\de}{\delta}
\newcommand{\ep}{\varepsilon}
\newcommand{\ka}{\kappa}
\newcommand{\la}{\lambda}
\newcommand{\si}{\sigma}
\newcommand{\te}{\theta}
\newcommand{\vp}{\varphi}
\newcommand{\De}{\Delta}
\newcommand{\Ga}{\Gamma}
\newcommand{\Om}{\Omega}
\renewcommand\leq\leqslant
\renewcommand\geq\geqslant
\DeclareMathOperator\Div{\nabla\cdot}
\renewcommand\pi\Pi
\begin{document} 

\title[Control of parabolic equations with inverse square potentials]{Controllability of parabolic equations\\ with inverse square infinite potential wells\\ via global Carleman estimates}
	
\author{Alberto Enciso}
\address{Instituto de Ciencias Matem\'aticas, Consejo Superior de Investigaciones Cient\'\i ficas, Madrid, Spain}
\email{aenciso@icmat.es}

\author{Arick Shao}
\address{School of Mathematical Sciences, Queen Mary University, London, United Kingdom}
\email{a.shao@qmul.ac.uk}
	
\author{Bruno Vergara}
\address{Department of Mathematics, Brown University, Kassar House, 151 Thayer St., Providence, RI 02912, USA}
\email{bruno\textunderscore vergara\textunderscore biggio@brown.edu}

\begin{abstract}
We consider heat operators on a convex domain $\Om$, with a critically singular potential that diverges as the inverse square of the distance to the boundary of $\Om$.
We establish a general boundary controllability result for such operators in all dimensions, in particular providing the first such result in more than one spatial dimension.
The key step in the proof is a new global Carleman estimate with a carefully chosen weight that captures the appropriate boundary conditions, the global geometry of the domain $\Omega$, and the $H^1$-energy for this problem.
The estimate is derived by combining two intermediate Carleman inequalities with distinct and carefully constructed weights involving non-smooth powers of the boundary distance.
\end{abstract}

\maketitle

\section{Introduction} \label{S.Intro}
	
In this paper, we consider, on a bounded domain in $\R^n$, the heat operator with a potential that diverges as the inverse square of the distance to the boundary hypersurface.
More precisely, our setting will be the following:

\begin{setting} \label{ass.domain}
Throughout the paper, we let $\Ga$ denote a closed, connected, and convex $C^4$-hypersurface in $\R^n$ ($n \geq 1$), and we let $\Omega$ denote the interior domain that is bounded by $\Ga$.
In addition, we let $d_\Ga: \Omega \rightarrow \R$ denote the distance to $\Ga$.
\end{setting}

We will consider the following equation on $\Omega$ and over a time interval:
\begin{equation}
\label{eq.0} -\pd_t v + \Big( \De + \frac{ \sigma }{ d_\Ga^2 } \Big) v + Y \cdot \nab v + W \, v = 0 \text{.}
\end{equation}
Here, $\si \in \R$ is a parameter measuring the strength of the singular potential, while $Y$ and $W$ represent first and zero-order coefficients that are less singular at $\Ga$.

Our main objective in this paper is to derive boundary null controllability for the above equation.
Given any initial state $v (0)$ and $T>0$, the question is whether one can pick some control $f$ on the boundary $( 0, T ) \times \Ga$ so that the evolution through \eqref{eq.0}---together with the boundary control---drives the solution to the target state $v(T)=0$ at time $T$.
While results have been established in one spatial dimension using moment methods (see \cite{Biccari} and references therein), here we provide, to our best knowledge, the first such result for general domains in arbitrary dimensions.

To show the above property, we derive sharp Carleman estimates for the operator in \eqref{eq.0}.
Indeed, genuinely new estimates are needed, since the singular potential scales as the Laplacian near the hypersurface $\Ga$, hence one cannot treat \eqref{eq.0} as a perturbation of the standard heat equation.
Moreover, these estimates will enable us to obtain robust boundary observability and controllability results, in that we both treat any spatial dimension and deal with a large class of lower-order coefficients.
Note the inclusion of $Y$ and $W$ in \eqref{eq.0} is very natural in our context, as $d_\Ga$ itself fails to be regular and well-behaved away from $\Ga$.
	
\subsection{Boundary asymptotics}

Let us start by describing the role of the strength parameter $\si$ in the boundary asymptotics of solutions to \eqref{eq.0}.
We let $\si<\frac14$, and we consider the Cauchy problem associated to \eqref{eq.0}, with initial data $v (0) = v_0$.
Moreover, it will often be convenient to write $\sigma := \ka ( 1 - \ka )$, with $\ka := \ka (\si) < \tfrac{1}{2}$.
        
According to the classical Frobenius theory for ODEs, the inverse-square singularity of the potential at $d_\Ga=0$ implies the characteristic exponents of this equation are precisely $\ka$ and $1-\ka$.
Therefore, if $\ka$ is not a half-integer (which ensures that logarithmic branches will not appear), solutions to the equation are expected to behave either like $d_\Ga^\ka$ or $d_\Ga^{1-\ka}$ close to $\Ga$ and correspond to the Dirichlet and Neumann branches, respectively.
As a result, the boundary data for our problem must be formulated with these $d_\Ga$-weights taken into account.

Now, as such quantities will naturally appear throughout the article, we set the following notations for future convenience:
 
\begin{definition} \label{kappa_def}
Given a strength parameter $\sigma \in ( -\infty, \frac{1}{4} )$:
\begin{itemize}
\item We let $\kappa := \kappa (\si) \in \R$ be the unique parameter satisfying
\begin{equation}
\label{kappa} \sigma := \ka (1 - \ka ) \text{,} \qquad \kappa < \tfrac{1}{2} \text{.}
\end{equation}

\item We define the associated Dirichlet and Neumann trace operators:
\begin{equation}
\label{bdry_data} \mc{D}_\sigma \phi := d_\Ga^{-\ka} \phi |_{ d_\Ga \searrow 0 } \text{,} \qquad \mc{N}_\sigma \phi := d_\Ga^{2\ka} \nab d_\Ga \cdot \nab( d_\Ga^{-\kappa} \phi ) |_{ d_\Ga \searrow 0 } \text{.}
\end{equation}

\item In addition, we introduce the following notation:
\begin{equation}\label{operator}
\De_\si := \De + \frac{\si}{ d_\Ga^2} \text{.}
\end{equation}
\end{itemize}
\end{definition}

\begin{remark}
We stress that throughout the paper, $\ka$ will always implicitly depend on $\si$ via the relation \eqref{kappa}.
Note that there is a one-to-one correspondence between the values of $\sigma \in ( -\infty, \frac{1}{4} )$ and $\kappa \in ( -\infty, \frac{1}{2} )$.
In particular:
\begin{align*}
\sigma \nearrow \tfrac{1}{4} \qquad &\leftrightarrow \qquad \kappa \nearrow \tfrac{1}{2} \text{,} \\
\sigma = 0 \qquad &\leftrightarrow \qquad \kappa = 0 \text{,} \\
\sigma = -\tfrac{3}{4} \qquad &\leftrightarrow \qquad \kappa = -\tfrac{1}{2} \text{.}
\end{align*}
In addition, all the associated quantities in \eqref{eq.0} and Definition \ref{kappa_def} reduce to the standard ones in the absence of the singular potential, i.e., when $\si=0$.
\end{remark}

Later in this paper, we will show that the Dirichlet and Neumann traces in \eqref{bdry_data} indeed lead to viable well-posedness theories for \eqref{eq.0}, at least for a subset of values~$\sigma$; see Sections \ref{S.obs} and \ref{S.control}.
As a result, \eqref{bdry_data} provides natural notions of boundary data for our upcoming main boundary control results.

The specific range of $\si$ for which we will develop well-posedness results is discussed further below.
For the moment, we note $\sigma = \frac{1}{4}$ can be viewed as a critical threshold, as \eqref{eq.0} is expected to be ill-posed for $\sigma > \frac{1}{4}$; see \cite{Baras, Biccari, VaZuazua}.
(Moreover, \cite{Biccari} showed---in one spatial dimension---that the cost of boundary control blows up in the limit $\sigma \nearrow \frac{1}{4}$.)
We also highlight $\sigma = -\frac{3}{4}$ as another natural threshold, since the Dirichlet branch fails to lie in $L^2$ once $\sigma \leq -\frac{3}{4}$.

\begin{remark}
Analogues of the adapted boundary data \eqref{bdry_data} have been considered before in the literature in different contexts for other singular operators; see, e.g., \cite{MM, Warnick}.
The boundary conditions \eqref{bdry_data} were also used in \cite{JEMS} toward Carleman and observability estimates for the wave equation analogue of \eqref{eq.0}.
\end{remark}

\subsection{Motivation}
	
Parabolic problems involving inverse square potentials have been intensively studied in the past decades; see \cite{Baras, Ireneo}, for instance, as well as references within for some early results.
Since the literature in this area is far too extensive to describe in full, we restrict our focus here to null controllability and Carleman estimates to keep the present discussion concise.

First, in one spatial dimension, in which we can set $\Omega := ( 0, 1 )$ without loss of generality, there are ample results treating the singular heat operator
\begin{equation}
\label{eq.0_1d} - \partial_t + \partial_x^2 + \tfrac{ \sigma }{ x^2 } \text{.}
\end{equation}
For instance, interior null controllability results for \eqref{eq.0_1d}---with the control supported away from $x = 0$---were established in \cite{CMV, CMV2, CMV5, MV}.
Also, various boundary null controllability results for \eqref{eq.0_1d} have been proven, both at $x = 1$ (away from the singularity) \cite{CMV5} and at $x = 0$ (at the singularity) \cite{Biccari, CMV4, Gueye}.

\begin{remark}
Many of the above results treated the degenerate parabolic operator
\begin{equation} \label{degen}
- \partial_t + \partial_x ( x^\alpha \partial_x \, \cdot \, ) \text{,} \qquad \alpha \in ( 0, 2 ) \text{.}
\end{equation}
However, this can be transformed to \eqref{eq.0_1d} through an appropriate change of variables, at least for a subset of parameters $\sigma$; see \cite[Appendix A]{Biccari} for details.
\end{remark}

Of particular relevance is the recent result of Biccari \cite{Biccari}, which established boundary null controllability at $x = 0$ for \eqref{eq.0_1d}, with $\sigma < \frac{1}{4}$.
As \cite{Biccari} applied the moment method, which relied strongly upon an eigenfunction decomposition of $\partial_x^2 + \sigma x^{-2}$, the results do not readily extend to higher dimensions, nor to parabolic equations with general lower-order terms as in \eqref{eq.0}.
Partly for this reason, the author listed several open questions of interest; see \cite[Section 8]{Biccari}.

A key motivation of the present work is to address a number of these points:
\begin{enumerate}
\item We use \emph{Carleman estimates} to prove our controllability result.
Such techniques have the advantage of being more robust, in that they allow one to treat lower-order terms and to more easily extend to nonlinear problems.

\item We treat the case where the potential diverges on all of $\Ga$.
As mentioned in \cite{Biccari}, even in one spatial dimension, the case of a potential singular at both $x = 0$ and $x = 1$ cannot be treated via the moment method.

\item We obtain boundary null controllability in \emph{all spatial dimensions}, under the assumption $\Ga$ is convex.
To our knowledge, this is the first such boundary control result in higher dimensions; see the discussions below.
\end{enumerate}
In particular, \cite{Biccari} highlighted the problem of developing Carleman estimates adapted to the weighted boundary data \eqref{bdry_data} as being especially challenging.

Next, turning to higher dimensions (with general $\Omega \subseteq \R^n$), \cite{CMV3, Ervedoza, VZuazua} established interior controllability results for the singular heat operator
\begin{equation}
\label{eq.0_pt} - \partial_t + \Delta + \tfrac{ \sigma }{ | x - x_0 |^2 } \text{,} \qquad x_0 \in \Omega \text{,}
\end{equation}
i.e., a singular potential that diverges as an inverse square of the distance to a single interior point.
The above results were then extended in \cite{Cazacu} to the case $x_0 \in \Ga$, in which the potential instead diverges at a single boundary point.

The case of higher dimensional settings \eqref{eq.0}, where the potential becomes singular on all of $\Ga$, is known to be particularly difficult.
Incidentally, these arise naturally when considering parabolic equations on conformally compact Riemannian manifolds; see, e.g., \cite{Vazquez}.
Along this direction, Biccari and Zuazua \cite{BZuazua} first proved interior null controllability for the operator $-\partial_t + \Delta_\sigma$ using Carleman estimates.

The authors in \cite{BZuazua} stress that one cannot employ their results to derive boundary controllability or boundary observability properties.
The key reason is that their Carleman estimates do not capture an appropriate notion of the Neumann data at the boundary, \eqref{bdry_data} in particular.
Moreover, the Carleman estimate in \cite{BZuazua} only captures the full $L^2$-norm, and not the (unweighted) $H^1$-norm; as we shall see, the full $H^1$-norm will be a critical part of our setup.

\subsection{Boundary controllability and observability}

In this subsection, we state the main results of this paper.
However, before doing so, we first give a precise description of the lower-order coefficients $Y$ and $W$ in \eqref{eq.0}:

\begin{definition} \label{lower_order_def}
We let $\mc{Z}$ denote the collection of all pairs $( Y, W )$, where:
\begin{itemize}
\item $Y: \Omega \rightarrow \R^n$ is a $C^1$-vector field, and $W: \Omega \rightarrow \R$ is an $L^\infty$-function.

\item $Y$ extends to a $C^3$-vector field on a neighborhood of $\Ga$.

\item $d_\Ga W$ extends to a $C^2$-function on a neighborhood of $\Ga$.
\end{itemize}
\end{definition}

While the exact form of Definition \ref{lower_order_def} is technical in nature, at an informal level, our results will require $Y$ and $W$ to have sufficient regularity at $\Ga$.
On other other hand, since $d_\Ga$ fails to be regular away from $\Ga$, then it will also be useful for $Y$ and $W$ to be less regular away from $\Ga$; see Remark \ref{wp_y}.
Though the conditions in Definition \ref{lower_order_def} are not optimal, we adopt these particular assumptions in their current form, since they allow for a simpler presentation.

The main result of this paper is the boundary null controllability for the singular parabolic equation \eqref{eq.0}.
More precisely, we consider the following Cauchy problem:

\begin{problem}[C]
Given initial data $v_0$ on $\Omega$, as well as Dirichlet boundary data $f$ on $( 0, T ) \times \Ga$, solve the initial-boundary value problem for $v$,
\begin{align}
\label{heat_ctl}  -\partial_t v + \Delta_\sigma v + Y \cdot \nabla v + W v = 0 &\quad \text{on $( 0, T ) \times \Omega$,} \\
\notag  v ( 0 ) = v_0 &\quad \text{on $\Omega$,} \\
\notag \mc{D}_\sigma v = f &\quad \text{on $( 0, T ) \times \Gamma$} \text{,}
\end{align}
where $\sigma \in ( -\frac{3}{4}, 0 )$, and where the lower-order coefficients satisfy $( Y, W ) \in \mc{Z}$.
\end{problem}
	
The following statement, which is a simplification of the more precise Theorem \ref{T.control} in the main text, represents our main boundary control result:

\begin{theorem} \label{T.control0}
Let $\Omega \subseteq \R^n$ be a bounded domain, with a convex, connected, $C^4$-boundary $\Ga$, and fix $\sigma \in ( -\frac{3}{4}, 0 )$.
Then, Problem (C) is boundary null controllable in any positive time---for any initial data $v_0 \in L^2(\Om)$ and any $T > 0$, there exists Dirichlet boundary data $f \in L^2((0,T) \times \Ga)$ such that the corresponding solution $v$ to \eqref{heat_ctl}, with the above $v_0$ and $f$, satisfies $v(T) \equiv 0$.
\end{theorem}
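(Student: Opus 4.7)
The plan is the standard reduction of boundary null controllability to a boundary observability estimate for the adjoint problem, with the substantive work being the global Carleman estimate announced in the abstract. Via the Hilbert Uniqueness Method (or equivalently transposition), Theorem \ref{T.control0} reduces to proving the observability inequality
\begin{equation*}
\| u(0) \|_{L^2(\Om)}^2 \leq C \int_0^T \!\! \int_\Ga | \mc{N}_\si u |^2 \, dS \, dt
\end{equation*}
for solutions $u$ of the formal adjoint
\begin{equation*}
\pd_t u + \De_\si u - Y \cdot \nab u + ( W - \nab \cdot Y ) u = 0 \text{ on } (0, T) \times \Om \text{,} \qquad \mc{D}_\si u = 0 \text{ on } (0, T) \times \Ga \text{.}
\end{equation*}
Setting this up rigorously relies on the weighted well-posedness theory of Sections \ref{S.obs}--\ref{S.control}, which makes sense of $\mc{N}_\si u$ as an $L^2$ boundary trace and justifies defining the controlled solution $v$ of \eqref{heat_ctl} by transposition against $u$. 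Once observability is available, the control $f$ is obtained in the usual way as $\mc{N}_\si u_\star$, where $u_\star$ minimizes the HUM-type functional $J(u_T) = \tfrac{1}{2} \| \mc{N}_\si u \|_{L^2((0,T) \times \Ga)}^2 + \langle u(0), v_0 \rangle_{L^2(\Om)}$ over a Hilbert space on which $\mc{N}_\si u$ is square-integrable.

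The passage from the Carleman estimate to observability is then routine: given a global Carleman inequality schematically of the form
\begin{equation*}
\int_0^T \!\! \int_\Om e^{-2s\vp} \bigl( s^3 |u|^2 + s |\nab u|^2 \bigr) dx \, dt \leq C \int_0^T \!\! \int_\Ga e^{-2s\vp} | \mc{N}_\si u |^2 \, dS \, dt \text{,}
\end{equation*}
with weight $\vp = \theta(t) \psi(x)$ blowing up as $t \to 0^+, T^-$, the $H^1$-bulk term absorbs all perturbative contributions from $(Y, W) \in \mc{Z}$ once $s$ is taken large, while the time weight restricts attention to a central time window on which $\vp$ is bounded. A backward energy estimate for the adjoint parabolic equation---derived in the weighted functional framework dictated by $\mc{D}_\si u = 0$---then transports the resulting control of $\| u \|_{L^2(\Om)}$ at intermediate time back to $t = 0$, yielding the observability inequality.

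The genuine difficulty, and the principal technical hurdle, is the Carleman estimate itself; the need for two distinct weights is what makes the argument delicate. The potential $\si d_\Ga^{-2}$ scales exactly like $\De$ near $\Ga$, so it cannot be absorbed as a lower-order perturbation, and standard smooth Fursikov--Imanuvilov weights fail to produce either coercivity up to the boundary or the correct boundary term $|\mc{N}_\si u|^2$. One is therefore forced to work with weights built from the non-smooth powers $d_\Ga^{\ka}$ and $d_\Ga^{1-\ka}$ dictated by the Frobenius characteristic exponents of \eqref{operator}. Following the strategy indicated in the abstract, I would prove two intermediate inequalities. The first, with a weight tuned to the singular potential, would conjugate $\pd_t + \De_\si$ into a coercive form on $H^1$ up to $\Ga$ and absorb $\si d_\Ga^{-2} |u|^2$ directly, rather than treating it perturbatively. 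The second, with a weight constructed so that a Rellich-type multiplier produces precisely the boundary contribution $d_\Ga^{2\ka} |\nab d_\Ga \cdot \nab ( d_\Ga^{-\ka} u )|^2$ matching \eqref{bdry_data}, would exploit the convexity of $\Ga$---i.e.\ the sign of its second fundamental form---both to make this boundary term positive and to secure the H\"ormander pseudoconvexity condition for $\vp$ up to $\Ga$. The two estimates are then combined so that the $H^1$-bulk from the first absorbs the unsigned remainders left by the second, and the restriction $\si \in ( -\tfrac{3}{4}, 0 )$, i.e.\ $\ka \in ( -\tfrac{1}{2}, 0 )$, ensures that the Dirichlet branch lies in $L^2$ and avoids the half-integer resonances where logarithmic branches appear. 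Simultaneously arranging coercive bulk control, the correct sign of the boundary term, and an identification of that term with $\mc{N}_\si u$ rather than a mixed trace is where the whole argument is won or lost.
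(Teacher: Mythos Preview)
Your high-level reduction via HUM to a boundary observability inequality for the adjoint is correct and matches the paper's strategy, as does your identification of the global Carleman estimate as the crux. However, your account of \emph{why} two Carleman weights are needed, and what each does, misidentifies the obstruction.

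You propose two weights serving different roles: one tuned to absorb the singular potential and yield $H^1$-bulk coercivity up to $\Ga$, the other designed via a Rellich multiplier to generate the boundary term $|\mc{N}_\sigma u|^2$. In the paper's argument, a \emph{single} weight of the form $\theta(t)\bigl[\tfrac{1}{1+2\ka} y^{1+2\ka} + \beta\bigr]$ already accomplishes both tasks simultaneously near $\Ga$: the specific power $1+2\ka$ is chosen precisely so that differentiation produces the factor $y^{2\ka}$ appearing in $\mc{N}_\sigma$, while convexity of $\Ga$ gives the correct sign for both the bulk $|\nabla u|^2$ term and the boundary contribution. The singular potential is handled within this same computation, via a pointwise Hardy inequality and the algebraic condition $p^2 - 2p + \sigma \geq -\tfrac{3}{4}$ (Lemma~\ref{L.pointC}), not by a separate weight.

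The actual reason two weights enter is an \emph{interior} obstruction you do not mention: $d_\Ga$ fails to be $C^2$ away from $\Ga$ (cut locus), so one must replace it by a smooth boundary defining function $y$ that agrees with $d_\Ga$ near $\Ga$. Any such $y$ has a critical point $x_* \in \Omega$ (its maximum), and near $x_*$ the factor $|\nabla y|^2$ in front of the zero-order bulk term vanishes, so the single-weight estimate loses control of $u^2$ on a ball $B_\delta(x_*)$. The paper's fix (Lemma~\ref{L.gluing}) is to construct two boundary defining functions $y_1, y_2$ of the \emph{same} form but with distinct critical points $x_{1,*} \neq x_{2,*}$, and to \emph{sum} the two resulting Carleman estimates: the positive $\lambda^3$-weighted $L^2$ term from the $y_2$-estimate, coercive near $x_{1,*}$, absorbs the negative contribution from the $y_1$-estimate there, and vice versa. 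This is purely an interior gluing mechanism and has nothing to do with the boundary analysis. Without it, your sketch would stall the moment you try to make the Carleman weight globally smooth and coercive on all of $\Omega$; convexity of $\Ga$ gives you nothing at interior critical points.

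A secondary point: the paper's observability is at the $H^1$ level, $\|u(0)\|_{H^1}^2 \lesssim \|\mc{N}_\sigma u\|_{L^2}^2$, paired with the hidden regularity $\|\mc{N}_\sigma u\|_{L^2} \lesssim \|u_T\|_{H^1}$ of Proposition~\ref{neumann_trace}. The HUM functional is therefore posed on $H^1_0(\Omega)$ and yields controls for $v_0 \in H^{-1}(\Omega)$, which is how the transposition framework of Section~\ref{S.control} is set up.
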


Theorem \ref{T.control0} is, to our best knowledge, the first boundary controllability result for $(-\pd_t + \De_\si ) v = 0$ in spatial dimensions higher than $1$, and for the equation \eqref{eq.0}---containing also general lower-order terms---in any dimension.

To prove Theorem \ref{T.control0}, we employ (the variational formulation of) the celebrated Hilbert uniqueness method (HUM); see \cite{Lions1, MZuazua}.
As is standard, the main step is to obtain key estimates---most crucially an appropriate \emph{observability inequality}---for the dual problem.
Thus, in the context of observability, we will consider the following Cauchy problem for the backwards singular heat equation:

\begin{problem}[O]
Given final data $u_T$ on $\Omega$, solve the following problem for $u$,
\begin{align}
\label{heat_obs}  \partial_t u + \Delta_\sigma u + X \cdot \nabla u + V u = 0 &\quad \text{on $( 0, T ) \times \Omega$,} \\
\notag u ( T ) = u_T &\quad \text{on $\Omega$,} \\
\notag u = 0 &\quad \text{on $( 0, T ) \times \Gamma$} \text{,}
\end{align}
where $\sigma \in (-\frac{3}{4}, 0 )$, and where the lower-order coefficients satisfy $( X, V ) \in \mc{Z}$.
\end{problem}

\begin{remark}
Since $\sigma < 0$, the boundary condition in \eqref{heat_obs} implies $\mc{D}_\sigma u = 0$.
While one could develop an equivalent theory using instead the condition $\mc{D}_\sigma u = 0$, here we remain with $u = 0$ to be consistent with the existing literature, e.g., \cite{Biccari, BZuazua}.
\end{remark}

A first consideration in the proof of Theorem \ref{T.control0} is finding an appropriate choice of spaces for the controllability problem (C), as well as the corresponding spaces for the observability problem (O).
To apply the HUM, one requires estimates for an appropriately defined Neumann trace in Problem (O):
\begin{itemize}
\item The Neumann trace should be bounded by the final data $u_T$.

\item The Neumann trace should satisfy a boundary observability estimate, that is, it should be bounded from below by $u (0)$.
\end{itemize}
We will show that the above indeed holds when $u_T$ lies in the usual energy space.

In particular, in Section \ref{S.obs}, we briefly summarize the well-posedness theory for Problem (O) with final data $u_T \in H^1_0 ( \Omega )$---that is, the analogue of strict solutions in \cite{Biccari, BZuazua}.
We then show (in Proposition \ref{neumann_trace}) that if $\sigma \in ( -\frac{3}{4}, \frac{1}{4} )$, then the quantity $\mc{N}_\sigma u$ from \eqref{bdry_data} is indeed well-defined and bounded in $L^2$ by the $H^1$-norm of $u_T$.
Furthermore, if $\sigma < 0$ as well, then we prove (in Theorem \ref{obs}) observability by bounding $\mc{N}_\sigma u$ in $L^2$ from below by the $H^1$-norm of $u (0)$.
The above two estimates can be roughly summarized by the following theorem:

\begin{theorem} \label{T.Observability0}
Let $\Omega \subseteq \R^n$ be a bounded domain, with a convex, connected, $C^4$-boundary $\Ga$, and fix $\sigma \in ( -\frac{3}{4}, 0 )$.
Moreover, let $u$ be a solution to Problem (O), with final data $u_T \in H^1_0 (\Om)$.
Then, the Neumann data $\mc{N}_\sigma u$ on $\Ga$ is finite, and
\[
\int_\Om | \nab u(0) |^2 \lesssim \int_{ (0,T) \times \Ga } ( \mc{N}_\si u )^2 \lesssim \int_\Om | \nab u_T |^2 \text{.}
\]
\end{theorem}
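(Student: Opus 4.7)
The theorem bundles two inequalities of rather different nature, corresponding to Proposition \ref{neumann_trace} and Theorem \ref{obs} cited just above the statement, and my plan is to attack them with different tools. A preliminary step common to both is the $H^1_0$ well-posedness of Problem~(O) summarised in Section \ref{S.obs}, which provides enough regularity to integrate by parts on the collar $\{d_\Ga>\ep\}$ and to take boundary traces as $\ep\searrow 0$. The hypothesis $\si\in(-\tfrac34,0)$, equivalently $\ka\in(-\tfrac12,0)$, is convenient here: the Dirichlet branch $d_\Ga^\ka$ blows up at $\Ga$, so the classical boundary condition $u|_\Ga=0$ automatically forces $\mc{D}_\si u=0$, leaving only the Neumann trace to carry boundary information.

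For the upper (hidden regularity) bound I would pass to the conjugated unknown $w:=d_\Ga^{-\ka}u$, so that $\mc{N}_\si u=\nab d_\Ga\cdot\nab w|_\Ga$ is an ordinary normal derivative, and then run a Rellich-type multiplier argument: test the resulting degenerate-parabolic equation for $w$ against $\nab d_\Ga\cdot\nab w$ on $(0,T)\times\{d_\Ga>\ep\}$ and integrate by parts in space and time. The boundary contribution at $\Ga$ reproduces $\int(\mc{N}_\si u)^2$ with a definite sign, since convexity of $\Ga$ forces $\nab^2 d_\Ga\leq 0$ on a tubular neighbourhood. The interior terms are bounded by $\int_0^T\|\nab u(t)\|_{L^2}^2$, which is in turn controlled by $\|\nab u_T\|_{L^2}^2$ through the standard backward parabolic energy inequality; the regularity built into $\mc{Z}$ is precisely what lets the lower-order terms $X\cdot\nab u+Vu$ be absorbed.

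For the lower (observability) bound I would feed $u$ into the global Carleman estimate announced in the abstract, with the temporal part of the weight chosen in the customary way so that it diverges as $t\searrow 0$ and $t\nearrow T$ and attains its minimum on a slab around $t=T/2$. By the preliminary remark above, the only surviving boundary contribution in the Carleman identity is a multiple of $\int(\mc{N}_\si u)^2$, while the interior side controls a weighted $H^1$-norm of $u$, in particular $\|\nab u(T/2)\|_{L^2}^2$ after absorbing the weight. A backward energy identity for \eqref{heat_obs}, obtained by testing against $u$ (resp.\ $\De_\si u$) and invoking a Hardy inequality adapted to $d_\Ga$---legitimate because $\si>-\tfrac34$---then transfers this midpoint control to $\|\nab u(0)\|_{L^2}^2$ up to a Gronwall factor depending on $T$, $\|X\|_{C^1}$, and $\|d_\Ga W\|_{L^\infty}$.

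The principal obstacle will be the delicate matching of scales at $\Ga$: every integration by parts above must be performed on $\{d_\Ga>\ep\}$ and its $\ep\searrow 0$ limit has to reproduce exactly the weighted traces in \eqref{bdry_data}, which in turn requires the Carleman weight to conjugate cleanly with the powers $d_\Ga^{-\ka}$ and $d_\Ga^{2\ka}$. This is precisely what the novel Carleman weight---built from non-smooth powers of $d_\Ga$---is designed to achieve, and is also the structural reason the paper has to combine two Carleman inequalities with carefully tuned weights: one to capture the Neumann trace in the boundary term, and one to capture the full (unweighted) $H^1$-norm in the interior.
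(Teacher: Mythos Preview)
Your outline for the lower (observability) inequality is essentially the paper's: apply the global Carleman estimate with the weight \eqref{weight} (more precisely \eqref{weightmu}) and $2p=1+2\ka$, check that the only surviving boundary contribution is $\int(\mc{N}_\si u)^2$, extract a time-integrated weighted $H^1$-bound, and then propagate to $t=0$ via the $H^1$-energy inequality \eqref{energy_strict}. One technical point you do not mention and the paper spends effort on (Proposition \ref{stupid_boundary}) is the boundary term containing $\partial_t u\,D_y u$: it is the least regular one and does not vanish by a naive trace argument, but requires a density argument through $\mf{D}(A_\si)$.

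Two substantive corrections, however.

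First, your upper-bound strategy rests on the claim that after conjugation $w:=d_\Ga^{-\ka}u$ one has $\mc{N}_\si u=\nabla d_\Ga\cdot\nabla w|_\Ga$. This is false: by \eqref{bdry_data}, $\mc{N}_\si u=d_\Ga^{2\ka}\,\nabla d_\Ga\cdot\nabla w|_{d_\Ga\searrow 0}$, and since $\ka<0$ the factor $d_\Ga^{2\ka}$ diverges. In fact \eqref{hidden_dirichlet} shows $w\sim(1-2\ka)^{-1}d_\Ga^{1-2\ka}\,\mc{N}_\si u$, so the unweighted normal derivative $\partial_\nu w|_\Ga$ vanishes, and a plain Rellich multiplier $D_y w$ would produce a zero boundary term, not $\int(\mc{N}_\si u)^2$. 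The paper does not use a multiplier identity here at all: instead it integrates $D_y[y^{2\ka}D_y(y^{-\ka}u)]$ along normal rays and invokes the weighted second-order bound $\|y^{-\ka}\nabla[y^{2\ka}\nabla(y^{-\ka}u)]\|_{L^2}$ from \eqref{elliptic_H2} and \eqref{energy_strict}. A weighted Rellich argument could in principle be made to work, but only with the multiplier $y^{2\ka}D_y w$ and a careful accounting of the degenerate weights---not as the ``ordinary normal derivative'' computation you describe.

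Second, your reading of why \emph{two} Carleman weights are summed is off. Both weights $F_1,F_2$ have the same near-boundary structure \eqref{weightmu} and each one already captures simultaneously the Neumann trace and the full $H^1$-norm. The obstruction is purely interior: each boundary defining function $y_j$ has a unique critical point $x_{j,*}$ where $|\nabla y_j|=0$, and the $L^2$-bulk coefficient (which carries $|\nabla y_j|^4$) degenerates there. The two functions are built (Lemma \ref{L.convex}) with $x_{1,*}\neq x_{2,*}$, so that the degeneracy of one is compensated by the positivity of the other; see \eqref{crucial_absorb}.
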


Using the estimates of Theorem \ref{T.Observability0}, the boundary null controllability of Theorem \ref{T.control0} then follows by adapting standard duality arguments.
In Section \ref{S.control}, we develop the dual theory of weak (or transposition) ($H^{-1}$-)solutions for Problem (C), now in the presence of the singular potential.
We then show (in Theorem \ref{T.control}) that in this setting, one can construct the desired null boundary controls in $L^2$.

We stress that the well-posedness theories for Problems (C) and (O) are far from direct due to the singular potential, and are further complicated by the lower-order terms.
Thus, for completeness, we develop both theories in Sections \ref{S.obs} and \ref{S.control}.

The key ingredient to establishing the crucial Theorem \ref{T.Observability0} is a new global Carleman estimate, proved in this paper, that captures both the boundary data \eqref{bdry_data} and the $H^1$-energy of the solutions.
This is discussed in the following subsection.

\subsection{Global Carleman inequality}

Carleman estimates have found many applications in PDEs, such as in unique continuation \cite{Arons, Calderon, Carleman, DosSantos, EF, Hormander, Tataru1}, control theory \cite{DZZ, FI, LL, LLZ, Tataru_Thesis, Tataru0}, inverse problems \cite{BK0, BK, Klibanov}, and embedded eigenvalues in the continuous spectrum of Schr\"{o}dinger operators.

We next motivate and state the new global Carleman estimate for the singular parabolic operator $\pm \pd_t + \De_\si$.
The premier issue is that of capturing the Neumann boundary data from \eqref{bdry_data}, which now involves powers of $d_\Ga$ that blow up at $\Gamma$.
This is achieved through a specially constructed Carleman weight that is designed to generate precisely the correct power of $d_\Ga$ at $\Ga$.

In the Carleman estimate of \cite{BZuazua} (which yielded interior observability for solutions of $(\pd_t+\De_\si)u=0$), the authors employed a weight having, near $\Ga$, the form
\begin{equation}\label{CWZ}
f_0 (t, x) := \frac{1}{ t^3 (T-t)^3 } \left[ C - d_\Ga (x)^2 \, \psi (x) - \bigg( \frac{d_\Ga(x)}{d_0} \bigg)^s \, e^{s \, \psi(x)} \right] \text{,}
\end{equation}
with $s$ a large enough real number, $C$ and $d_0$ constants, and $\psi \in C^4 (\bar{\Om})$ a function of $d_\Ga$ so that $f_0$ is sufficiently smooth near and at $\Ga$.
For our case, we must replace~$s$ by a smaller power depending on $\ka$, so that appropriate singular weights appear upon differention.
In particular, near $\Ga$, our weight will be of the form
\begin{equation}
\label{weight_pre} F_0 ( t, x ) := \frac{1}{ t (T-t) } \left[ \frac{1}{ 1 + 2 \ka } \, d_\Ga (x)^{1+2\ka} + \beta \right] \text{,}
\end{equation}
with $\beta > 0$ a suitably chosen constant.
A key step will be then
to show that the weight \eqref{weight_pre} indeed suffices to capture the
Neumann trace from \eqref{bdry_data} on $\Ga$.

Next, observe that in order to prove Theorem \ref{T.Observability0}, our Carleman estimate will also need to control the $H^1$-norm on $\Om$.
In \cite{BZuazua}, their choice of weight \eqref{CWZ} yields control of a bulk quantity that is roughly of the form
\[
\int_0^T \frac{1}{ t^3 (T-t)^3 } \int_\Om d_\Ga^s | \nabla u |^2 \,dx\, dt \text{,} \qquad s > 0 \text{.}
\]
Because of the factor $d_\Ga(x)^s$, which vanishes near $\Ga$, their estimate fails to capture the full $H^1$-energy of $u$.
(Only the full $L^2$-norm was needed in \cite{BZuazua}.)

For our setting, we show that by using the weight \eqref{weight_pre} instead, we can capture the full $\dot{H}^1$-norm, without a weight that degenerates at $\Ga$.
Here, we note that our assumption of $\Ga$ being convex is crucial, as this ensures that the bulk terms in our Carleman estimate containing $| \nabla u |^2$ are uniformly positive.

Unfortunately, \eqref{weight_pre} does not yet suffice for a global Carleman estimate on all of $\Omega$.
This is because while $d_\Ga$ is $C^4$ near $\Ga$ (by the regularity of $\Ga$), it can fail to be differentiable elsewhere in $\Om$ due to the presence of caustics.
To get around this, we replace $d_\Ga$ by a more general \emph{boundary defining function} $y \in C^4 (\Om)$ that coincides with $d_\Ga$ in a thin neighborhood of $\Ga$.
While this function $y$ remains regular away from $\Ga$, it also
retains almost the same convexity properties as $d_\Ga$.
See Definition \ref{ep-bdf} for the precise properties of $y$, and Lemma \ref{L.convex} for its construction.

Thus, for our global Carleman weight, we replace $d_\Ga$ in \eqref{weight_pre} by $y$:
\begin{equation} \label{weight}
F (t,x) := \frac{1}{ t (T-t) } \left[ \frac{1}{ 1 + 2 \ka } \, y (x)^{1+2\ka} + \beta \right] \text{.}
\end{equation}
Since $y$ is ``close enough" to $d_\Ga$ for our purposes, by using $F$,
we both capture the Neumann trace and bound the global $\dot{H}^1$-norm on all of $\Om$ as desired.

We emphasize that the above still leaves untreated one fundamental issue---while these arguments suffice to control the $L^2$-norm of $\nabla u$, the same cannot be said for the $L^2$-norm of $u$ itself.
Away from the (unique by construction) critical point $x_\ast$ of $y$, our Carleman inequality allows us to control bulk integrals (over $( 0, T ) \times \Om$) of $u^2$ with uniformly positive weights, provided $\sigma \in (-\frac{3}{4}, 0)$.
However, these weights, which are accompanied by factors of $| \nabla y |^2$, can become non-positive near $x_\ast$.

To overcome this rather serious obstacle, we construct \emph{two} boundary defining functions $y_1$, $y_2$ with distinct critical points $x_{1,\ast} \neq x_{2,\ast}$; see Lemma \ref{L.convex}.
We then \emph{sum the two Carleman estimates} arising from $y_1$ and $y_2$.
In particular, the above-mentioned non-positivity for the $y_1$-Carleman estimate near $x_{1,\ast}$ can be overcome by a positive $L^2$-contribution in the $y_2$-estimate (since $x_{1,\ast}$ is away from $x_{2,\ast}$), which also has an extra factor of the large Carleman parameter $\lambda$.
Thus, by combining two Carleman estimates, we can absorb all non-positive terms into positive ones.

\begin{remark}
Similar tricks involving summing two Carleman estimates with different weights were used in \cite{AS, Jena, ArickS}, in the context of wave equations.
\end{remark}

Combining all the above leads to our Carleman estimate, for which an informal simplified version is stated below; see Theorem \ref{T.preciseC} for the precise statement.

\begin{theorem}[Global Carleman estimate]\label{T.mainglobal}
Let $\Omega \subseteq \R^n$ be a bounded domain, with a convex, connected, $C^4$-boundary $\Ga$, and fix $\sigma \in ( -\frac{3}{4}, 0 )$.
Then, there are two boundary defining functions $y_1, y_2 \in C^4(\Om)$, such that for any $T > 0$ and $\la \gg 1$, and for any sufficiently regular function $u$ satisfying $u = 0$ on $(0, T) \times \Ga$,
\begin{align}
\label{E.mainglobal} &C \la \int_{ (0, T) \times \Ga } (\mc{N}_\si u)^2 \, dS dt + \sum_{j=1,2} \int_{ (0,T) \times \Om } e^{-2 \la F_j} (\pm \pd_t u + \De_\si u)^2 \\
\notag &\quad \gtrsim \la\sum_{j=1,2} \int_{(0,T)\times \Om} e^{-2 \la F_j} \big[ y_j^{2\ka} |\nab u|^2 + y_j^{6\ka-1} (\la^2 + y_j^{-1-4\ka})u^2 \big] \text{,}
\end{align}
where $F_j$ is the Carleman weight \eqref{weight}, but with $y$ replaced by $y_j$.
\end{theorem}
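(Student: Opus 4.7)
My plan is to derive a single-weight Carleman estimate for each $y_j$ separately by the standard conjugation-and-commutator method, then sum the two to cure the one region of degeneracy each individual estimate suffers from.

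\textbf{Single-weight estimate for each $y_j$.} Fix $j \in \{1,2\}$, set $w := e^{-\la F_j} u$, and write $e^{-\la F_j}(\pm\pd_t + \De_\si) u = (S + A) w$, where $S$ is formally self-adjoint and $A$ is formally skew-adjoint in $L^2((0,T) \times \Om)$; the real potential $\si d_\Ga^{-2}$ sits entirely inside $S$. Then $\|(S+A)w\|_{L^2}^2 \geq 2\langle Sw, Aw\rangle = \langle [S,A] w, w\rangle + \mc{B}_j$, where $\mc{B}_j$ collects the boundary contributions produced by integration by parts. The weight is tailored so that $\nab F_j = [t(T-t)]^{-1} y_j^{2\ka} \nab y_j$, and each derivative in the commutator reproduces the exact power of $y_j$ needed to couple cleanly against $\si d_\Ga^{-2}$ near $\Ga$ through a Hardy-type cancellation. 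The leading terms in $\langle [S,A]w, w\rangle$ are of three kinds: (i) a quadratic form in $\nab w$ bounded from below by $\la \nab^2 F_j$, non-negative by convexity of $y_j$ (Definition~\ref{ep-bdf}), supplying the $\la \int e^{-2\la F_j} y_j^{2\ka} |\nab u|^2$ term on the RHS; (ii) a cubic-in-$\la$ contribution of the form $\la^3 [t(T-t)]^{-3} y_j^{6\ka-1} |\nab y_j|^2 w^2$; (iii) a Hardy-type $L^2$-term $\la [t(T-t)]^{-1} y_j^{2\ka-2} w^2$ in a collar around $\Ga$, positive precisely because $\si \in (-\tfrac{3}{4}, 0)$ pins the Hardy constant to the right sign. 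Combined, (ii) and (iii) yield the $\la [t(T-t)]^{-1} y_j^{6\ka-1}(\la^2 + y_j^{-1-4\ka}) u^2$ term modulo the degeneracy discussed below.

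\textbf{Boundary contribution.} The temporal boundary at $t = 0, T$ contributes nothing since $e^{-2\la F_j}$ and the relevant normal derivatives of $F_j$ vanish there by construction of $F_j$. On $(0,T) \times \Ga$, the hypothesis $u|_\Ga = 0$ kills tangential derivatives, and the remaining normal-derivative integrand reduces, using $y_j = d_\Ga$ in the collar, to a positive multiple of $\la [t(T-t)]^{-1} d_\Ga^{2\ka} (\pd_\nu u)^2\big|_\Ga$, which coincides with $\la (\mc{N}_\si u)^2$ up to a positive factor; this supplies the first term on the LHS of \eqref{E.mainglobal}. Changing variables back from $w$ to $u$ restores all the $e^{-2\la F_j}$ weights in the quantities listed above.

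\textbf{Combining the two weights.} The chief obstacle is that the cubic-in-$\la$ $u^2$-coefficient $y_j^{6\ka-1} |\nab y_j|^2$ vanishes at the unique critical point $x_{j,\ast}$ of $y_j$, so near $x_{j,\ast}$ the single-weight $y_j$-estimate loses positive control of the $u^2$-term. By Lemma~\ref{L.convex} we may choose $y_1, y_2$ with $x_{1,\ast} \neq x_{2,\ast}$; on any compact subset of $\Om$ the ratios $e^{-2\la F_1}/e^{-2\la F_2}$ are bounded uniformly in $\la$, so on a neighborhood of $x_{1,\ast}$ (disjoint from $x_{2,\ast}$) the uniformly positive $y_2$-contribution $\la^3 e^{-2\la F_2} y_2^{6\ka-1} |\nab y_2|^2 u^2$ dominates, for $\la$ sufficiently large, any negative $y_1$-term present there, and vice versa. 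Adding the two single-weight inequalities and reorganizing the bulk terms yields exactly \eqref{E.mainglobal}, provided $\la$ is taken large enough to absorb all lower-order and cross terms.
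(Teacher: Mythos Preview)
Your overall architecture matches the paper's, but there is a genuine gap in the ``combining the two weights'' step. The assertion that on any compact subset of $\Om$ the ratio $e^{-2\la F_1}/e^{-2\la F_2}$ is bounded uniformly in $\la$ is false. That ratio equals $\exp\bigl[-2\la\,\theta(t)\bigl(f_1(y_1)-f_2(y_2)\bigr)\bigr]$, and unless $f_1(y_1)\equiv f_2(y_2)$ on the region in question it is exponentially large or exponentially small in $\la$. This is exactly the issue: the negative $y_1$-term near $x_{1,*}$ carries the weight $e^{-2\la F_1}$, while the positive $y_2$-term you want to absorb it into carries $e^{-2\la F_2}$. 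If $F_1<F_2$ on $B_\delta(x_{1,*})$, the positive term is exponentially \emph{smaller} than the negative one, and no amount of largeness of $\la$ rescues the absorption.

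The paper's fix uses the one piece of freedom you left unexploited: the additive constants $\beta_1,\beta_2$ in the weights. One first fixes $\delta>0$ and values $0<r_j<y_j(x_{j,*})$ so that $B_\delta(x_{1,*})\subset\{y_1\geq r_1\}\cap\{y_2\leq r_2\}$ (and symmetrically with indices swapped), then chooses
\[
\beta_2-\beta_1 \;=\; \tfrac{1}{1+2\ka}\bigl(r_1^{\,1+2\ka}-r_2^{\,1+2\ka}\bigr),
\]
which forces $f_1(r_1)=f_2(r_2)$. On $B_\delta(x_{1,*})$ one then has $y_1\geq r_1$ and $y_2\leq r_2$, so monotonicity of $f_j$ gives $F_1\geq F_2$ there, hence the \emph{one-sided} bound $e^{-2\la F_1}\leq e^{-2\la F_2}$ with constant $1$, independent of $\la$, in precisely the direction needed. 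The absorption then succeeds because the negative bulk term is only $O(\la^2)$ while the positive $y_2$-term is $O(\la^3)$. Without tuning $\beta_1,\beta_2$ this way, the argument does not close.

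A smaller point: you write ``non-negative by convexity of $y_j$'', but the boundary defining functions are \emph{concave} (Definition~\ref{ep-bdf}(d) gives $-\nabla^2 y_j\geq 0$ near $\Ga$); it is this concavity, together with the sign of the $\nabla y\otimes\nabla y$ piece of $\nabla^2 F_j$, that makes the first-order bulk term non-negative.
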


The proof of Theorem \ref{T.mainglobal} follows the usual multiplier approach to Carleman estimates for heat equations, using the weights \eqref{weight} for both $y_1$ and $y_2$.
Aside from the ideas mentioned before, there are two key technical challenges to overcome.
The first is showing that the boundary terms capture the Neumann trace; this follows from computations for the boundary terms (Lemma \ref{L.pointC}), along with the detailed understanding of boundary asymptotics gained from Proposition \ref{neumann_trace}.

The second, and more difficult, challenge is to ensure all the top-order bulk terms obtained in the computations have good sign.
As there are many singular weights involved, we have more dangerous terms to consider than in standard derivations of Carleman estimates.
These are treated via extensive computations (see Lemma \ref{L.pointC}) that use, in an essential way, both the geometry of the domain---via convexity of the level hypersurfaces of $y$---and our assumption that $\si \in (-\frac{3}{4}, 0)$.

\begin{remark}
In the full statement, Theorem \ref{T.preciseC}, of our Carleman estimate, the power $1 + 2 \ka$ in \eqref{weight} is replaced by a more general parameter $2p$.
For purposes of boundary control, one requires $2p = 1 + 2 \ka$ to capture the Neumann trace.
However, allowing for more general powers $p$ leads to unique continuation properties for a larger range of $\sigma$.
We plan to revisit this point in a future paper.
\end{remark}

\begin{remark}
Note the estimate in the precise Theorem \ref{T.preciseC} differs from that of Theorem \ref{T.mainglobal} in that the Neumann integral in \eqref{E.mainglobal} is replaced by various boundary limits of integrals over hypersurfaces $\{ y_j = \delta \}$, as $\delta \searrow 0$.
However, one can show that, in the context of Problem (O), each of these boundary limits will either vanish or be bounded by the desired Neumann integral; for details, see the proof of Theorem \ref{obs}, as well as Section \ref{sec.neumann} for the limit computations.
The simpler Neumann integral was written in \eqref{E.mainglobal} for conceptual clarity.
\end{remark}

\subsection{Further discussions}

Let us now elaborate on the specific range $\sigma \in ( -\frac{3}{4}, 0 )$ that is assumed in all our main results.
As mentioned before, this is required in the proof of Theorem \ref{T.mainglobal} to ensure positivity of the bulk $L^2$-terms.
However, there are also conceptual reasons for applying this particular restriction.

First, the condition $\sigma > -\frac{3}{4}$ is crucial to the setup of our well-posedness theories.
As mentioned before, on the control side (Problem (C)), this is needed for solutions of \eqref{heat_ctl} with inhomogeneous Dirichlet data to be $L^2$-integrable on $\Om$.
Furthermore, on the observability side (Problem (O)), this seems necessary in order to bound the Neumann trace from above by the $H^1$-energy; see Proposition \ref{neumann_trace}.
The latter is an essential part of the Hilbert uniqueness method setup we apply here.
Thus, we do not expect our results to extend to $\sigma \leq -\frac{3}{4}$, at least within the well-posedness and HUM settings adopted in this paper.

Of course, the case $\si=0$ is just the classical heat equation, for which global Carleman estimates are now standard.
However, one should note that the proof of Theorem \ref{T.mainglobal} does not carry over to this case simply by setting $\si=0$, as it uses in a crucial way the critical singularity of the potential.

On the other hand, it is less clear whether our results can be extended beyond to the range $\si \in ( 0, \frac{1}{4} )$, though there seem to be some obstacles.
For one, note the Carleman estimate \eqref{E.mainglobal} fails to control the full $H^1$-energy when $\si > 0$ ($\ka > 0$).
Furthermore, in this regime, the boundary condition $u = 0$ in \eqref{heat_obs} does not directly imply $\mc{D}_\sigma u = 0$.
Therefore, one could expect that our Carleman weight \eqref{weight} and our choice of spaces are not well-adapted to this case $\si \in ( 0, \frac{1}{4} )$.

\begin{remark}
Also worth mentioning is the result of Gueye \cite{Gueye}, which established boundary controllability of the degenerate parabolic equation \eqref{degen} in one spatial dimension using spectral theoretic methods and a variant of Ingham's inequality.
However, this result cannot be directly compared to ours, since \cite{Gueye} uses different spaces in its HUM setup.
In particular, \cite{Gueye} showed that the $L^2$-norm of the Neumann datum controls the fractional $H^{-\ka}$-norm of the solution, and vice versa.
In contrast, we are less optimal with regards to regularity, but we use the smoothing property of parabolic equations to our advantage.
\end{remark}

Finally, for wave operators having the same singular potential (i.e., $-\partial_{t}^2 + \Delta_\sigma$), we recently established in \cite{JEMS}---in the special case $n > 2$ and $\Om$ a unit ball---boundary observability through a similar sharp global Carleman estimate.
While the Carleman weight is different from \eqref{weight}, due to the equation being hyperbolic, it is built upon the same sharp power of $d_\Ga$ yielding both the $H^1$-energy and the Neumann boundary trace.
Interestingly, this observability fails to imply boundary control for wave operators, as the lack of smoothing prevents us from applying the HUM machinery.
Using this framework, boundary controllability would necessitate working with fractional Sobolev spaces of optimal regularity, as in \cite{Gueye}.

One can also view Theorem \ref{T.mainglobal} partly as extending the methods of \cite{JEMS} (which hold only for $\Om$ being a unit ball) to all convex domains.
It would be interesting to determine whether the results of \cite{JEMS} also hold for general convex $\Om$.

\subsection{Organization of the paper}
	
In Section \ref{S.Carleman}, we construct boundary defining functions that coincide with the distance $d_\Ga$ near the boundary $\Ga$.
These are then used to prove a precise version of our global Carleman estimate, Theorem \ref{T.mainglobal}.
The applications of this Carleman estimate to boundary observability and controllability are then presented in Sections \ref{S.obs} and \ref{S.control}, respectively.	

\section{The Carleman estimate} \label{S.Carleman}

In this section, we prove a precise version of Theorem \ref{T.mainglobal}---our main Carleman estimate for parabolic operators with inverse square potentials.

In the remainder of the paper, we adopt the setting described in the beginning of the introduction---in particular the domain $\Omega \subseteq \R^n$, its convex boundary $\Ga$, and the distance $d_\Ga$ to the boundary.
Moreover, since $d_\Ga$ is always $C^4$ in a neighborhood of $\Ga$, we can also adopt the following for convenience:

\begin{setting}[Regularity of $d_\Ga$] \label{ass.d0}
Let $0 < d_0 \ll 1$ be such that $d_\Ga$ is $C^4$ on the domain
\[
\{ x \in \Om \mid d_\Ga (x) < 2 d_0 \} \text{.}
\]
\end{setting}

\subsection{Construction of boundary defining functions}

As described in the introduction, the proof of our Carleman estimate will require, as weights, boundary defining functions that extend $d_\Ga$ while essentially preserving concavity and regularity.
Here, we detail the construction of such functions.

First, we list the precise conditions needed for our constructions:

\begin{definition} \label{ep-bdf}
Given constants $\ep, \ep' > 0$, we call $y \in C^4 (\Om)$ an \emph{$( \ep, \ep' )$-boundary defining function} for $\Om$ iff the following properties hold:
\begin{enumerate}[\hspace{0.3cm}$a)$]
\item $y$ is strictly positive on $\Om$, and $y = d_\Gamma$ on $\{ x \in \Omega : d_\Gamma (x) < d_0 \}$.

\item $y$ has a unique critical point $x_\ast \in \Omega$, with $d_\Gamma (x_\ast) > 2 d_0$.

\item $y$ satisfies the following gradient bounds:
\begin{equation}
\label{est-grad} \begin{cases}
| \nab y |^2 = 1 &\quad d_\Gamma (x) \leq d_0 \text{,} \\
| \nab y |^2 \geq \tfrac{1}{2} &\quad d_0 < d_\Gamma < 2 d_0 \text{.}
\end{cases}
\end{equation}

\item $y$ satisfies the following concavity properties for each $x \in \Omega$ and $\xi \in \R^n$:
\begin{equation}
\label{est-pair} \begin{cases}
- \xi \cdot \nab^2 y (x) \cdot \xi \geq 0 &\quad d_\Gamma (x) \leq d_0 \text{,} \\
- \xi \cdot \nab^2 y (x) \cdot \xi \geq - \ep' | \xi |^2 &\quad d_0 < d_\Gamma (x) < 2 d_0 \text{,} \\
- \xi \cdot \nab^2 y (x) \cdot \xi \geq \ep |\xi|^2 &\quad d_\Gamma (x) \geq 2 d_0 \text{.}
\end{cases}
\end{equation}
\end{enumerate}
\end{definition}

\begin{remark}
Definition \ref{ep-bdf} implies $x_*$ is a non-degenerate maximum of $y$, that is, $\nab y ( x_* ) = 0$ and $\nab^2 y ( x_* )$ is negative-definite.
Furthermore, note that $x_*$ is the only maximum of $y$, so that $\nab y$ vanishes only at $x_*$.
\end{remark}

\begin{definition} \label{ep-pair}
Given any $\ep, \ep' > 0$, we refer to $( y_1, y_2 )$ as an \emph{$( \ep, \ep' )$-boundary defining pair} in $\Om$ iff the following properties hold:
\begin{enumerate}[$i)$]
\item Both $y_1$ and $y_2$ are $( \ep, \ep' )$-boundary defining functions.

\item The (unique) critical points of $y_1$ and $y_2$ are distinct: $x_{ 1, * } \neq x_{ 2, * }$.
\end{enumerate}
\end{definition}

In the proof of our global Carleman estimates, we will employ a carefully constructed boundary defining pair.
As the first step, we show that any convex domain admits such a pair, given sufficiently small parameters:

\begin{lemma} \label{L.convex}
There exist $C, C', \ep_0 > 0$---depending only on $\Omega, d_0$---such that for any $0 < \ep < \ep_0$, there exists a $( C \ep, C' \ep )$-boundary defining pair $(y_1, y_2)$ in $\Om$.
\end{lemma}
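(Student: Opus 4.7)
The plan is to construct $y_j$ by composing $d_\Ga$ with a $C^4$ concave radial profile $F$ (to smoothly extend $d_\Ga$ past its cut locus) and then adding a strongly concave quadratic perturbation of size $\ep$ centred at a prescribed interior point; the pair $(y_1, y_2)$ is obtained by running the construction at two distinct such points. Convexity of $\Ga$ plays a twofold role throughout: it supplies $-\nab^2 d_\Ga \succeq 0$ on the collar where $d_\Ga$ is smooth (directly giving the concavity in condition (d) of Definition \ref{ep-bdf} on $\{d_\Ga \leq d_0\}$), and it makes $d_\Ga$ globally concave on $\Om$, which I will use to rule out spurious critical points.

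Concretely, I would first (possibly shrinking $d_0$ in Setting \ref{ass.d0}) assume $d_\Ga \in C^4(\{d_\Ga < 3 d_0\})$ and fix distinct points $x_{1,\ast}, x_{2,\ast}$ in the nonempty open set $\{d_\Ga > 3 d_0\} \subset \Om$. Next I fix a $C^4$, nondecreasing, concave function $F : [0, \infty) \to \R$ with $F(s) = s$ on $[0, 2 d_0]$ and $F(s) \equiv c_0$ on $[5 d_0/2, \infty)$, arranged so that all left-derivatives of $F$ vanish at $5 d_0/2$; I then define $\rho := F \circ d_\Ga$ on $\{d_\Ga \leq 5 d_0/2\}$ and $\rho := c_0$ elsewhere. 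The chain rule with $F' \geq 0$, $F'' \leq 0$, and $-\nab^2 d_\Ga \succeq 0$ gives $\rho \in C^4(\Om)$ with $\rho = d_\Ga$ on the collar $\{d_\Ga \leq 2 d_0\}$ and $-\nab^2 \rho \succeq 0$ globally. Finally, choosing a $C^4$-cutoff $\chi : [0, \infty) \to [0, 1]$ with $\chi \equiv 0$ on $[0, d_0]$ and $\chi \equiv 1$ on $[2 d_0, \infty)$, I set
\[
y_j(x) := \rho(x) + \ep \, \chi(d_\Ga(x)) \bigl( M - |x - x_{j,\ast}|^2 \bigr) \text{,} \qquad M > \operatorname{diam}(\Om)^2 \text{.}
\]

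Verification of Definition \ref{ep-bdf} splits cleanly by region: on $\{d_\Ga \leq d_0\}$, $\chi = 0$ and $\rho = d_\Ga$ give $y_j = d_\Ga$, hence $|\nab y_j|^2 = 1$ and $-\nab^2 y_j \succeq 0$; on $\{d_0 < d_\Ga < 2 d_0\}$, $\rho = d_\Ga$ still and the $C^2$-norm of the $\ep$-perturbation is bounded by $C \ep$ (with $C = C(\Om, d_0)$), giving $|\nab y_j|^2 \geq 1/2$ and $-\nab^2 y_j \succeq -C' \ep I$ for $\ep$ small; on $\{d_\Ga \geq 2 d_0\}$, $\chi \equiv 1$ and $\nab \chi = 0$, so $\nab^2 y_j = \nab^2 \rho - 2 \ep I$ and $-\nab^2 y_j \succeq 2 \ep I$ by concavity of $\rho$. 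Uniqueness of the critical point follows from a two-case analysis: in $\{d_\Ga \geq 5 d_0/2\}$, $\nab \rho = 0$, so $\nab y_j = -2\ep(x - x_{j,\ast})$ vanishes only at $x_{j,\ast}$; in the sub-transition $\{2 d_0 \leq d_\Ga \leq 5 d_0/2\}$, the critical-point equation $F'(d_\Ga) \nab d_\Ga = 2\ep (x - x_{j,\ast})$, paired with $x - x_{j,\ast}$, contradicts the inequality $\nab d_\Ga(x) \cdot (x - x_{j,\ast}) < -d_0/2$ that follows from concavity of $d_\Ga$ on $\Om$ and the estimate $d_\Ga(x_{j,\ast}) - d_\Ga(x) > d_0/2$ (for $F' > 0$ the pairing is strictly negative, while the right-hand side $2\ep |x - x_{j,\ast}|^2$ is strictly positive since $x \neq x_{j,\ast}$).

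The \emph{main technical obstacle} is the pointwise Hessian bookkeeping in the transition $\{d_0 < d_\Ga < 2 d_0\}$: expanding $\nab^2 y_j$ produces many cross terms involving $\chi'(d_\Ga)$, $\chi''(d_\Ga)$, $\nab d_\Ga$, and $\nab^2 d_\Ga$, all of which must be collected and bounded in operator norm by $C' \ep$ uniformly, with $C'$ depending only on $\Om$ and $d_0$. Once this is done, tracking the constants fixes $C, C', \ep_0$, and the pair $(y_1, y_2)$ with $x_{1,\ast} \neq x_{2,\ast}$ satisfies both Definition \ref{ep-bdf} and the distinctness condition of Definition \ref{ep-pair}.
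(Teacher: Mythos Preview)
Your construction is correct and genuinely different from the paper's. The paper smooths $d_\Ga$ by \emph{mollification}, setting $d^\ep := \phi^\ep * d_\Ga$ and then blending via
\[
y_1 = d_\Ga + (1-\vp)\bigl(d^\ep - d_\Ga - \ep|x|^2\bigr)
\]
with a cutoff $\vp$; the critical point of $y_1$ falls wherever it falls, and $y_2$ is produced afterwards by a Morse-lemma change of coordinates near $x_{1,*}$ followed by a small linear shift. Your route replaces both steps: you smooth $d_\Ga$ by composing with a concave radial profile $F$ that plateaus past the collar (so $\rho = F\circ d_\Ga$ is globally $C^4$ and concave without any convolution), and you \emph{prescribe} the two critical points from the outset by centering the quadratic perturbation at chosen $x_{j,*}$. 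The concavity/supergradient argument you use to exclude critical points in the sub-transition $\{2d_0 \le d_\Ga \le 5d_0/2\}$ is a nice touch that the paper does not need (its mollified $d^\ep$ is globally concave, so strict concavity of $y_1$ on $\{d_\Ga \ge 2d_0\}$ forces uniqueness directly). Your approach is more elementary---no mollifiers, no Morse lemma---and gives explicit control over where the critical points sit; the price is that you need $d_\Ga \in C^4$ on a slightly thicker collar $\{d_\Ga < 3d_0\}$ than the paper's $\{d_\Ga < 2d_0\}$, which you handle by shrinking $d_0$.

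One small point to tidy: in the sub-transition argument you only treat the case $F'(d_\Ga(x)) > 0$ explicitly. You should either note that $F$ can be chosen with $F' > 0$ on $[2d_0, 5d_0/2)$ (e.g.\ take $F'' < 0$ strictly in the interior of the transition), or dispose of the $F' = 0$ case separately---there the critical-point equation forces $x = x_{j,*}$, which contradicts $d_\Ga(x) \le 5d_0/2 < 3d_0 < d_\Ga(x_{j,*})$.
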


\begin{proof}
We begin by constructing one such boundary defining function $y_1$.
First, note that if $T_p \Ga$ is the tangent hyperplane to $\Ga$ at a point $p$ and $x \in \Om$, then
\[
d_\Ga (x) = \inf_{p\in\Ga} \operatorname{dist} (x, T_p \Ga) \text{,}
\]
by the convexity of $\Ga$.
This implies $d_\Ga$ is a concave function on $\Om$; in particular, for any $\xi \in \R^n$, the distributional derivative $- \xi \cdot \nab^2 d_\Ga \cdot \xi$ is a nonnegative measure.

Consider now the function
\begin{equation}
\label{d_moll} d^\ep := \phi^\ep \ast d_\Ga \text{,} \qquad \phi^\ep (x) := \ep^{-n} \phi \big( \tfrac{x}{\ep} \big)
\end{equation}
for a small parameter $0 < \ep < \ep_0$, where $\phi$ is a standard positive mollifier:
\[
\phi \in C^\infty_c ( B_1(0), [0,\infty) ) \text{,} \qquad \int_{\R^n} \phi(x) \, dx = 1 \text{.}
\]
Note $d^\ep$ is smooth and concave by the concavity of $d_\Ga$---indeed, for all $\xi \in \R^n$,
\begin{equation}
\label{d_moll_ccv} - \xi \cdot \nab^2 d^\ep \cdot \xi = \phi^\ep * ( - \xi \cdot \nab d_\Ga \cdot \xi ) \geq 0 \text{,}
\end{equation}
since $\phi \geq 0$ and $- \xi \cdot \nab^2 d_\Ga \cdot \xi$ is a nonnegative measure on $\Om$.
	
We now introduce a new cutoff $\vp \in C^\infty ( \Om )$ such that
\begin{equation}
\label{bdf_vp} \vp (x) = \begin{cases}
1 &\quad d_\Ga (x) \leq d_0 \text{,} \\
0 &\quad d_\Ga (x) \geq 2 d_0 \text{.}
\end{cases}
\end{equation}
In terms of these functions, we then set $y_1 \in C^4 ( \Omega )$ as
\begin{equation}
\label{bdfy} y_1 (x) := d_\Ga (x) + [1 - \vp(x)] [ d^\ep(x) - d_\Ga(x) - \ep |x|^2 ] \text{,}
\end{equation}
with $\ep < \ep_0 \ll d_0$.
(Note $1 - \vp \equiv 0$ near $\Ga$, where $d^\ep$ fails to be defined.)

To begin with, in the region $\{ d_\Ga \leq d_0 \}$, we note that $y_1 = d_\Ga$, since $\vp = 1$ there by \eqref{bdf_vp}.
Furthermore, the above implies
\[
| \nab y_1 |^2 = 1 \text{,} \qquad - \xi \cdot \nab^2 y_1 \cdot \xi \geq 0 \text{,} \quad \xi \in \R^n \text{.}
\]
In particular, all the conditions in Definition \ref{ep-bdf} are satisfied by $y_1$ on $\{ d_\Ga \leq d_0 \}$.

Next, consider the intermediate region $\{ d_0 < d_\Ga < 2 d_0 \}$.
Since $d_\Ga$ is $C^4$ there,
\[
\| d^\ep - d_\Ga \|_{ C^4 ( \{ d_0 < d_\Ga < 2 d_0 \} ) } \leq C \ep \text{.}
\]
Moreover, since $|\nab d_\Ga|=1$ and $|1-\vp| \leq 1$ in this region, the above implies
\begin{align}
\label{est-inter} y_1 &\geq \tfrac{d_0}{2} - C' \ep > 0 \text{,} \\
\notag |\nab y_1|^2 &\geq 1 - C' \ep^2 > \tfrac{1}{2} \text{,}
\end{align}
for sufficiently small $\ep_0$ (depending on $\Omega$, $d_0$), as well as
\begin{align}
\label{est-inter-2} - \xi \cdot \nab^2 y_1 \cdot \xi &= - \xi \cdot \nab^2 d_\Ga \xi - \xi \cdot \nab^2 \big[ (1-\vp) ( d^\ep - d_\Ga - \ep|x|^2 ) \big] \cdot \xi \\
\notag &\geq -C' \ep |\xi|^2
\end{align}
for all $\xi \in \R^n$, where $C' > 0$ denotes constants (depending on $\Om$, $d_0$) that can change between lines.
Thus, $y_1$ satisfies the conditions of Definition \ref{ep-bdf} on $\{ d_0 < d_\Ga < 2 d_0 \}$.

Lastly, consider the region $\{ d_\Ga \geq 2 d_0 \}$, on which
\[
y_1 = d^\ep - \ep |x|^2 \text{.}
\]
The above, along with \eqref{d_moll}, implies that $y_1$ is uniformly positive on this region for sufficiently small $\ep_0$, and that $y_1$ is uniformly concave, since by \eqref{d_moll_ccv},
\[
- \xi \cdot \nab^2 y_1 \cdot \xi = - \xi \cdot \nab^2 d^\ep \cdot \xi + 2 \ep |\xi|^2 \geq 2 \ep |\xi|^2 \text{,} \qquad \xi \in \R^n \text{.}
\]
Moreover, since $y_1$ is a positive function on $\Om$ whose gradient does not vanish on $\{ d_\Ga \leq 2 d_0 \}$ and which is uniformly concave on $\{ d_\Ga \geq 2 d_0 \}$, then $y_1$ must have a unique critical point $x_{1,*} \in \{ d_\Ga > 2 d_0 \}$, its maximum.
Therefore, $y_1$ satisfies all the conditions of Definition \ref{ep-bdf} on $\{ d_0 \geq 2 d_0 \}$.

The above yields that $y_1$ is a $( C \ep, C' \ep )$-boundary defining function, for appropriate constants $C, C'$ and sufficiently small $\ep_0$.
It remains to construct a new function $y_2$ so that $(y_1, y_2)$ defines a corresponding boundary defining pair.

To this end, note by the Morse lemma, there is a neighborhood $U \subset \{ d_\Ga > 2 d_0\}$ around $x_{1,*}$ and local coordinates $z: U \to\R^n$ such that $y_1$ is a quadratic form on $U$: $y_1 = z \cdot A \cdot z$, with $A$ a non-singular $n \times n$ matrix.
Furthermore, without loss of generality, one can assume $z (x_{1,*}) = 0$, and $U$ is an open ball $B_{2\eta}(0)$ in $z$-coordinates, for some small $\eta>0$.
As the critical point $x_{1,*}$ is non-degenerate, $|\nab y_1| \geq c_0 > 0$ on $U \setminus U'$, where $U' = B_\eta(0)$ in $z$-coordinates.
	
We then take a cutoff function $\chi \in C^\infty ( \Omega )$ satisfying $\chi \equiv 1$ on $U'$ and $\chi \equiv 0$ on on $\Om \setminus U$, and we define the function
\begin{equation}
\label{y2} y_2 := y_1 + \de \chi b \cdot z \text{,} \qquad b \in \Sph^{n-1} \text{,} \quad \de \ll 1 \text{.}
\end{equation}
Since $y_2$ coincides with $y_1$ on $\Om \setminus U$, then $y_2$ satisfies all the conditions of Definition \ref{ep-bdf} (with the same parameters) on both $\{ d_\Ga \leq d_0 \}$ and $\{ d_0 < d_\Ga < 2 d_0 \}$.

For the remaining inner region, note we have on $U'$ that
\[
\nab_z y_2 = 2 A \cdot z + \de b \text{,}
\]
so that $y_2$ has a critical point at
\[
z (x_{2,*}) = -\tfrac{1}{2} \de A^{-1} b \neq 0 \text{,}
\]
which is unique in $U'$.
Moreover, as the shift $\de\chi b\cdot z$ is supported in $U$ and
\begin{equation}
\label{shift-est} \| \de \chi b \cdot z \|_{C^j} \lesssim \de \eta^{1-j} \text{,} \qquad j \leq 4
\end{equation}
no new critical points are introduced as long as $\de$ is taken small enough with respect to $\eta$; in particular, we can ensure that $| \nab_z y_2 | > 0$ in $U \setminus U'$.
Similarly, by further shrinking $\delta$ if needed, \eqref{y2} and \eqref{shift-est} also ensure that on $\{ d_\Ga > 2 d_0 \}$,
\[
- \xi \cdot \nab^2 y_2 \cdot \xi \geq C \ep |\xi|^2 \text{,} \qquad \xi \in \R^n \text{,}
\]
possibly with a different value of $C$.
Thus, $y_2$ satisfies all the conditions of Definition \ref{ep-bdf}, hence $(y_1, y_2)$ is our desired boundary defining pair.
\end{proof}

Finally, since $d_\Ga$ fails to be regular away from $\Ga$, it will often be useful to replace our singular operator by a smoother variant:

\begin{definition} \label{def-operator'}
Given any $y \in C^4 ( \Omega )$, we define the $y$-modified operators
\begin{equation}\label{operator'}
\De_{ \sigma, y } := \De + \sigma y^{-2} \text{,} \qquad \sigma \in \R \text{.}
\end{equation}
For convenience, we also adopt the following notation for the $y$-derivative:
\begin{equation}
\label{Dyv} D_y v := \nab y \cdot \nab v \text{,} \qquad v \in C^1 ( \Omega ) \text{.}
\end{equation}
\end{definition}

In particular, when $y$ is a boundary defining function, so that $y = d_\Ga$ near $\Ga$, the difference $d_\Ga^{-2} - y^{-2}$ is hence bounded on $\Om$.
Thus, it will suffice to prove Carleman estimates for $\pm \partial_t + \Delta_{ \sigma, y }$, which has a $C^4$ singular potential, rather than $\pm \partial_t + \Delta_\sigma$.
	
With this notation in hand, we prove a pointwise Hardy-type inequality associated with $D_y$-derivatives that will useful in proving our Carleman estimates:

\begin{lemma} \label{pointHardy}
The following holds for any $q \in \R$, $y \in C^4 (\Om)$, and $v \in C^1(\Om)$:
\begin{align}
\label{Hardy} y^{2q} ( D_y v )^2 &\geq \tfrac{1}{4} (1-2q)^2 y^{2q-2} |\nab y|^4 \, v^2 + \nab \cdot \big[ \tfrac{1}{2} (1-2q) y^{2q-1} \nab y |\nab y|^2 \, v^2 \big] \\
\notag &\qquad - \tfrac{1}{2} (1-2q) \big[	y^{2q-1} \De y |\nab y|^2 \, v^2 + 2 y^{2q-1} ( \nab y \cdot \nab^2 y \cdot \nab y ) \, v^2 \big] \text{.}
\end{align}
\end{lemma}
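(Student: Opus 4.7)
The inequality is a pointwise Hardy identity obtained by completing the square together with a divergence identity, so the plan is essentially algebraic. I would first fix a real parameter $c$ to be chosen later and start from the nonnegative square
\[
\bigl( y^q D_y v + c\, y^{q-1} |\nabla y|^2\, v \bigr)^2 \geq 0 \text{,}
\]
which after expansion rearranges into
\[
y^{2q} (D_y v)^2 \geq -2c\, y^{2q-1} |\nabla y|^2\, v\, D_y v - c^2\, y^{2q-2} |\nabla y|^4\, v^2 \text{.}
\]
The point is then to rewrite the cross term as a divergence up to controlled correction terms.

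Next, using $2 v D_y v = D_y (v^2) = \nabla y \cdot \nabla (v^2)$, I would compute the divergence of the vector field $y^{2q-1} |\nabla y|^2 \nabla y \cdot v^2$ via the Leibniz rule:
\[
\nabla \cdot \bigl( y^{2q-1} |\nabla y|^2 \nabla y \, v^2 \bigr) = y^{2q-1} |\nabla y|^2 \nabla y \cdot \nabla(v^2) + v^2 \, \nabla \cdot \bigl( y^{2q-1} |\nabla y|^2 \nabla y \bigr) \text{,}
\]
and then expand the last divergence to obtain
\[
\nabla \cdot \bigl( y^{2q-1} |\nabla y|^2 \nabla y \bigr) = (2q-1)\, y^{2q-2} |\nabla y|^4 + 2\, y^{2q-1} \nabla y \cdot \nabla^2 y \cdot \nabla y + y^{2q-1} |\nabla y|^2 \Delta y \text{.}
\]
Solving for $y^{2q-1} |\nabla y|^2\, v\, D_y v$ and substituting into the inequality above yields an estimate of the form
\[
y^{2q}(D_y v)^2 \geq -c\, \nabla \cdot \bigl( y^{2q-1} |\nabla y|^2 \nabla y\, v^2 \bigr) + \bigl[c(2q-1) - c^2\bigr] y^{2q-2} |\nabla y|^4 v^2 + \bigl( \text{Hessian and } \Delta y \text{ terms} \bigr) \text{.}
\]

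Finally, I would optimize in $c$: the coefficient $c(2q-1) - c^2$ is maximized at $c = (2q-1)/2 = -(1-2q)/2$, which produces the sharp Hardy constant $\tfrac{1}{4}(1-2q)^2$ and simultaneously turns the prefactor $-c$ on the divergence into $\tfrac{1}{2}(1-2q)$, matching exactly the right-hand side of \eqref{Hardy}. The computation with this choice of $c$ collapses to the stated inequality. I do not anticipate a genuine obstacle here; the only care needed is bookkeeping of signs and verifying that the Hessian and $\Delta y$ corrections line up with the bracketed last line of \eqref{Hardy}, which they do once $c = -(1-2q)/2$ is inserted.
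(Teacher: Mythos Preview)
Your proposal is correct and follows essentially the same argument as the paper: the paper also starts from the nonnegative square $(y^q D_y v + b\, y^{q-1}|\nabla y|^2 v)^2 \geq 0$, expands, rewrites the cross term via the same divergence identity, and then takes the optimal $b = (2q-1)/2$. Your parameter $c$ plays the role of the paper's $b$, and the bookkeeping you outline matches line for line.
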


\begin{proof}
This is a direct consequence of the inequality
\begin{align*}
0 &\leq ( y^q \, D_y v + b y^{ q - 1 } |\nab y|^2 \, v )^2 \\
&= y^{ 2 q } \, ( D_y v )^2 + b^2 y^{ 2 q - 2 } |\nab y|^4 \, v^2 + 2 b y^{ 2 q - 1 } |\nab y|^2 \, v D_y v \\
&= y^{ 2 q } \, ( D_y v )^2 + b ( b - 2 q + 1 ) y^{ 2 q - 2 } |\nab y|^4 \, v^2 - 2 b y^{ 2 q - 1 } ( \nab y \cdot \nab^2 y \cdot \nab y ) \, v^2	\\
&\qquad - b y^{ 2 q - 1 } \De y |\nab y|^2 \, v^2 + \nabla \cdot ( b y^{ 2q - 1 } \nabla y |\nab y|^2 \, v^2 ) \text{,}
\end{align*}
which holds for any constants $q, b \in \R$.
Equation~\eqref{Hardy} is then obtained by taking the optimal value
of the parameter $b := \tfrac{ 2q - 1 }{2}$.
\end{proof}

\subsection{The global pointwise inequality}

Our aim here is to prove the following key lemma, which serves as the pointwise Carleman inequality obtained from a single boundary defining function $y$.
In particular, this attains adequate control for our solutions everywhere except near the critical point of $y$.

\begin{lemma} \label{L.pointC}
Fix $T > 0$, and let $p, \sigma \in \R$ satisfy
\begin{equation}
\label{mu_kappa} 0 < p < \tfrac{1}{2} \text{,} \qquad p^2 - 2p + \sigma \geq -\tfrac{3}{4} \text{.}
\end{equation}
Let $\ep, \ep', \de > 0$ be sufficiently small (depending on $T, \Omega, d_0, \sigma, p$), let $y \in C^4 ( \Omega )$ be an $( \ep, \ep' )$-boundary defining function, and let $x_*$ denote the critical point of $y$.
Then, there exist $C, C', \lambda_0 > 0$ (depending on $T, \Omega, d_0, \sigma, p, \ep, \ep', \de$) such that the following inequality holds on $[0,T] \times \Om$ for any $\lambda \geq \lambda_0$ and any $u \in C^2 ( [0,T] \times \Om )$,
\begin{align}
\label{pointC} &e^{-2 \la F} | ( \pm \partial_t + \Delta_{ \sigma, y } ) u |^2 - 4 ( \pd_t J^t + \Div J ) \\
\notag &\quad \geq C \la \te e^{-2\la F} y^{-1+2p} \, | \nab u |^2 - C' \la^2 \te^3 e^{-2 \la F} y^{-3+4p} \mathbbm{1}_{ B_\delta (x_*) } \, u^2 \\
\notag &\quad\qquad + C e^{-2\la F} ( \la^3 \te^3 y^{-4+6p} + \la \te y^{-3+2p} ) \mathbbm{1}_{ \Om \setminus B_\delta (x_*) } \, u^2 \text{,}
\end{align}
where the weight $F$ is given by
\begin{equation}
\label{GF} F(t,x) := \te(t) f(y(x)) \text{,} \qquad \te(t) := \tfrac{1}{t(T-t)} \text{,} \qquad f(y) := \tfrac{1}{2p} y^{2p} + \be \text{,}
\end{equation}
with $\be > 0$ an arbitrary constant, where $J^t$ is a scalar function satisfying
\begin{equation}
\label{Jt-statement} | J^t | \leq C e^{-2 \lambda F} \, | \nab u |^2 + C e^{-2 \lambda F } \, \lambda^2 \theta^2 y^{-2} \, u^2 \text{,}
\end{equation}
and where $J$ is a vector field satisfying, sufficiently near $\Ga$,
\begin{align}
\label{J-statement} \nab y \cdot J - e^{-2 \lambda F } \, \partial_t u D_y u &\leq C e^{-2 \lambda F} \, \lambda \theta y^{-1+2p} \, ( D_y u )^2 \\
\notag &\qquad + C e^{-2 \lambda F} \, \lambda^3 \theta^3 y^{-3+2p} \, u^2 \text{.}
\end{align}
\end{lemma}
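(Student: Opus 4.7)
My plan is to prove~\eqref{pointC} by the standard conjugation-commutator method for Carleman estimates, adapted to the singular potential $\sigma y^{-2}$ and to the weight $F=\theta(t)f(y(x))$. I first set $w := e^{-\lambda F} u$ and compute the conjugated operator
\[
Lw := e^{-\lambda F}(\pm \partial_t + \Delta_{\sigma,y})(e^{\lambda F} w) = \pm \partial_t w + \Delta w + 2\lambda \nabla F \cdot \nabla w + V w,
\]
with $V := \sigma y^{-2} + \lambda^2|\nabla F|^2 + \lambda \Delta F \pm \lambda \partial_t F$. I then split $L=S+A$ into its formally symmetric and antisymmetric parts,
\[
S w := \Delta w + \bigl(\sigma y^{-2} + \lambda^2 |\nabla F|^2 \pm \lambda \partial_t F\bigr) w, \qquad A w := \pm \partial_t w + 2 \lambda \nabla F \cdot \nabla w + \lambda (\Delta F) w,
\]
and start from the elementary pointwise lower bound $|Lw|^2 \geq 2 (Sw)(Aw)$, which follows from $(Sw)^2 + (Aw)^2\geq 0$.

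The bulk of the proof consists in expanding $2(Sw)(Aw)$ as a sum of a ``good" bulk term, a time derivative $\partial_t J^t$, and a spatial divergence $\nabla\cdot J$. Concretely, I would apply at the pointwise level the identities
\begin{align*}
2 \Delta w \cdot \partial_t w &= 2 \nabla \cdot (\partial_t w\, \nabla w) - \partial_t |\nabla w|^2, \\
2 \Delta w \cdot (\phi\, D_y w) &= 2 \nabla \cdot \bigl(\phi (D_y w) \nabla w - \tfrac{1}{2} \phi |\nabla w|^2 \nabla y\bigr) \\
&\qquad + |\nabla w|^2 (\phi \Delta y + \nabla \phi \cdot \nabla y) - 2 \phi \, \nabla w \cdot \nabla^2 y \cdot \nabla w - 2(D_y w)(\nabla \phi \cdot \nabla w),
\end{align*}
together with the routine products $2 V w \cdot \partial_t w = \partial_t(Vw^2) - (\partial_t V) w^2$ and $2 V w \cdot D_y w = \nabla \cdot(V w^2 \nabla y) - \nabla\cdot(V \nabla y)\,w^2$, and the companion identity $2\lambda(\Delta F) w \cdot \Delta w = \nabla\cdot\bigl(\lambda(\Delta F)\nabla w^2\bigr) - 2\lambda\,\nabla(\Delta F)\cdot w\nabla w - 2\lambda(\Delta F)|\nabla w|^2$. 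Each bulk contribution is then a polynomial in $\lambda,\theta$ multiplying a weighted $|\nabla w|^2$, $(D_y w)^2$, or $w^2$, with coefficients built from $\nabla y$, $\nabla^2 y$, $\Delta y$ and $\partial_t F$. Passing back to $u$ via $w=e^{-\lambda F} u$ inserts $e^{-2\lambda F}$ everywhere and modifies $(J^t, J)$ only by lower-order pieces; the bounds~\eqref{Jt-statement} and~\eqref{J-statement} then follow by direct inspection, with the near-boundary contribution $e^{-2\lambda F}\lambda \theta y^{-1+2p}(D_y u)^2$ in~\eqref{J-statement} arising from the divergence $\nabla\cdot(\phi(D_y w)\nabla w)$ above (since $\phi\sim\lambda\theta y^{2p-1}$ and $\nabla y\cdot\bigl(\phi(D_y w)\nabla w\bigr) = \phi(D_y w)^2$), and the subtracted term $e^{-2\lambda F}\partial_t u\, D_y u$ on the left of~\eqref{J-statement} coming from the $\nabla\cdot(\partial_t w\,\nabla w)$ divergence in the first identity.

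The crux of the argument is positivity of the assembled bulk. Two mechanisms are at work. First, the convexity properties of $y$ from Definition~\ref{ep-bdf}---namely $-\xi\cdot\nabla^2 y\cdot\xi\geq 0$ where $d_\Gamma\leq d_0$, $\geq -\epsilon'|\xi|^2$ in the intermediate shell, and $\geq \epsilon|\xi|^2$ where $d_\Gamma\geq 2d_0$---feed into the $-2\phi\,\nabla w\cdot\nabla^2 y\cdot\nabla w$ term with $\phi\sim\lambda\theta y^{2p-1}$ and, after absorbing the small $\epsilon'$-loss in the intermediate shell, yield the principal $\lambda\theta y^{-1+2p}|\nabla u|^2$ bulk of~\eqref{pointC}. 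Second, the pointwise Hardy inequality of Lemma~\ref{pointHardy}, applied with a suitable exponent $q$, converts a leftover $(D_y w)^2$ piece into a weighted $w^2$ term whose constant combines with the $-2\lambda\nabla\cdot(V\nabla y)$-type contributions from $V$---and in particular with the $\sigma y^{-2}$ piece---to produce an effective $w^2$ coefficient whose nonnegativity is exactly the hypothesis $p^2 - 2p + \sigma \geq -\tfrac{3}{4}$ from~\eqref{mu_kappa}. Outside a small ball $B_\delta(x_\ast)$ around the unique critical point of $y$, the factor $|\nabla y|^2$ inside $\lambda^2|\nabla F|^2$ is bounded below uniformly, which produces the positive bulk $\lambda^3\theta^3 y^{-4+6p}+\lambda\theta y^{-3+2p}$ in the $w^2$-coefficient; inside $B_\delta(x_\ast)$ this lower bound degenerates, yielding precisely the localized negative term $-C'\lambda^2\theta^3 y^{-3+4p}\mathbbm{1}_{B_\delta(x_\ast)}u^2$ on the right-hand side.

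The principal obstacle is the sheer combinatorial bookkeeping: the symmetric potential $V$ contains three multiplication pieces with distinct singular behaviors ($\sigma y^{-2}$, $\lambda^2\theta^2 y^{4p-2}|\nabla y|^2$, and the subprincipal $\lambda\Delta F\pm\lambda\partial_t F$), each producing several crossed contributions with different powers of $\lambda$, $\theta$, and $y$, all of which must be correctly dominated by the principal $\lambda^3\theta^3 y^{-4+6p}$ and $\lambda\theta y^{-1+2p}$ positive terms and, where they do not, moved cleanly into either $\nabla\cdot J$ (so that $\nabla y\cdot J$ near $\Gamma$ indeed recovers the Neumann-type trace~\eqref{J-statement}) or $\partial_t J^t$ (so that $J^t$ has the natural energy-density structure~\eqref{Jt-statement}). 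This is the technical content of the ``single-weight" pointwise identity proved in the present lemma, which will then be combined in the subsequent global Carleman estimate with its counterpart for the second boundary defining function $y_2$, whose critical point lies outside $B_\delta(x_{1,\ast})$, in order to absorb the localized bad term near $x_\ast$.
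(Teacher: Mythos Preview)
Your overall strategy---conjugate, split the operator, expand the cross term $2(Sw)(Aw)$ into bulk plus divergences, and use the concavity of $y$ together with the pointwise Hardy inequality---is the right framework, and your identification of where the $p^2-2p+\sigma\geq -\tfrac34$ condition enters, where the $B_\delta(x_*)$ defect arises, and how the boundary currents \eqref{Jt-statement}--\eqref{J-statement} come about is accurate. However, there is a genuine gap in the first-order bulk positivity.

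You claim that the term $-2\phi\,\nabla w\cdot\nabla^2 y\cdot\nabla w$ (with $\phi\sim\lambda\theta y^{2p-1}$) ``yields the principal $\lambda\theta y^{-1+2p}|\nabla u|^2$ bulk.'' But near $\Gamma$ (where $y=d_\Gamma$), one has $|\nabla y|^2\equiv 1$, so $\nabla y\cdot\nabla^2 y\cdot\nabla y=\tfrac12 D_y|\nabla y|^2=0$, and hence $-\nabla^2 y$ always has a zero eigenvalue in the normal direction; moreover Definition~\ref{ep-bdf} gives only $-\xi\cdot\nabla^2 y\cdot\xi\geq 0$ there, with no uniform lower bound. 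With your standard symmetric/antisymmetric split, the $\lambda\Delta F\,|\nabla w|^2$ contributions from $\Delta w\cdot Aw$ and from $\Delta w\cdot(\lambda\Delta F\,w)$ cancel, so your entire first-order bulk near $\Gamma$ is $2(1-2p)\lambda\theta y^{-2+2p}(D_yw)^2 - 2\lambda\theta y^{-1+2p}\nabla w\cdot\nabla^2 y\cdot\nabla w$, which controls only $(D_yw)^2$ and not the tangential part of $|\nabla w|^2$; you cannot get $\geq C\lambda\theta y^{-1+2p}|\nabla w|^2$ this way.

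The paper fixes this by \emph{not} using the naive $S/A$ split. It introduces the constant $z:=\tfrac{1-2p}{2\,y(x_*)}>0$ and shifts the zero-order piece $-2z\lambda D_yF\cdot v$ from the multiplier $Sv$ into the potential $\mathcal A_0$. The point is that multiplying $\Delta v$ by this extra piece produces a new bulk term $+2z\lambda D_yF\,|\nabla v|^2=2z\lambda\theta y^{-1+2p}|\nabla y|^2|\nabla v|^2$, which is \emph{strictly} positive near $\Gamma$ and compensates for the degenerate Hessian; see \eqref{first_order_1} and the matrix bound \eqref{symmat}--\eqref{symmat1}. The specific value of $z$ is also used in the zero-order analysis (cf.\ \eqref{zero_order_12}) to keep the resulting $u^2$ coefficients positive on all of $\Omega\setminus B_\delta(x_*)$. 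This shift is the missing ingredient in your proposal.
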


\begin{proof}
Throughout, we let $C, C' > 0$ denote constants with the same dependencies as in the lemma statement, and such that their values can change from line to line.
Furthermore, it suffices to prove \eqref{pointC} for just the backward operator $\partial_t + \Delta_{ \sigma, y }$, as the estimate for $- \partial_t + \Delta_{ \sigma, y }$ then follows via a time reversal $t \mapsto T - t$.

For clarity of exposition, we divide the proof into four steps.

\subsubsection*{Step 1: The conjugate inequality}

First, we prove the key preliminary commutator estimates for the operator $\partial_t + \Delta_{ \sigma, y }$.
For this, let us set
\begin{equation}
\label{v_G} v := e^{-\lambda F} u \text{.}
\end{equation}
Furthermore, the following constant will be useful later in the proof:
\begin{equation}
\label{eqz} z := \tfrac{1-2p}{2 \, y(x_*)} > 0 \text{.}
\end{equation}

Using \eqref{v_G} and \eqref{eqz}, we expand $( \partial_t + \Delta_{ \sigma, y } ) u$ as follows:
\begin{align}
\label{ident1} e^{-\lambda F} ( \partial_t + \Delta_{ \sigma, y } ) u &= e^{-\lambda F} ( \partial_t + \Delta_{ \sigma, y } ) ( e^{\lambda F} v ) \\
\notag &= Sv + \Delta v + \mc{A}_0 \, v \text{,}
\end{align}
where $Sv$ and $\mc{A}_0$ are given by
\begin{align}
\label{aux} Sv &:= \partial_t v + 2 \lambda \, \nabla F \cdot \nabla v + \lambda ( \Delta F - 2 z D_y F ) \, v \text{,} \\
\notag \mc{A}_0 &:= \lambda \partial_t F + 2 \lambda z D_y F + \lambda^2 | \nabla F |^2 + \sigma y^{-2} \text{.}
\end{align}
Multiplying \eqref{ident1} by $Sv$, and noting from Cauchy's inequality that
\[
e^{-\lambda F} ( \partial_t + \Delta_{ \sigma, y } ) u \, Sv \leq \tfrac{1}{4} e^{-2 \lambda F} | ( \partial_t + \Delta_{ \sigma, y } ) u |^2 + | Sv |^2 \text{,}
\]
we then conclude
\begin{equation}
\label{ident2} \tfrac{1}{4} e^{-2 \lambda F} | ( \partial_t + \Delta_{ \sigma, y } ) u |^2 \geq \Delta v S v + \mc{A}_0 \, v Sv \text{.}
\end{equation}

We now expand the terms on the right-hand side of \eqref{ident2}.
First, we have
\begin{align}
\label{I_Delta_0} \Delta v S v &= \Delta v \partial_t v + 2 \lambda \Delta v ( \nabla F \cdot \nabla v ) + \lambda ( \Delta F - 2 z D_y F ) \, v \Delta v \\
\notag &= I^\Delta_t + I^\Delta_1 + I^\Delta_0 \text{.}
\end{align}
The first term on the right-hand side is straightforward:
\begin{equation}
\label{I_Delta_1} I^\Delta_t = \nabla \cdot ( \nabla v \partial_t v ) + \partial_t \big( \tfrac{1}{2} | \nabla^2 v | \big) \text{.}
\end{equation}
The most involved term is $I^\Delta_1$, requiring multiple applications of the Leibniz rule:
\begin{align}
\label{I_Delta_2} I^\Delta_1 &= \nabla \cdot [ 2 \lambda \nabla v ( \nabla F \cdot \nabla v ) ] - \lambda \, \nabla F \cdot \nabla ( | \nabla v |^2 ) - 2 \lambda \, ( \nabla v \cdot \nabla^2 F \cdot \nabla v ) \\
\notag &= \nabla \cdot [ 2 \lambda \, \nabla v ( \nabla F \cdot \nabla v ) - \lambda \, \nabla F \, | \nabla v |^2 ] + \lambda \Delta F \, | \nabla v |^2 \\
\notag &\qquad - 2 \lambda \, ( \nabla v \cdot \nabla^2 F \cdot \nabla v ) \text{.}
\end{align}
A similar computation also yields
\begin{align}
\label{I_Delta_3} I^\Delta_0 &= \nabla \cdot [ \lambda ( \Delta F - 2 z D_y F ) \, v \nabla v ] - \lambda \, ( \Delta F - 2 z D_y F ) \, | \nabla v |^2 \\
\notag &\qquad - \tfrac{1}{2} \lambda \nabla ( \Delta F - 2 z D_y F ) \cdot \nabla ( v^2 ) \\
\notag &= \nabla \cdot \big[ \lambda ( \Delta F - 2 z D_y F ) \, v \nabla v - \tfrac{1}{2} \lambda \nabla ( \Delta F - 2 z D_y F ) \, v^2 \big] \\
\notag &\qquad - \lambda \, ( \Delta F - 2 z D_y F ) \, | \nabla v |^2 + \tfrac{1}{2} \lambda \Delta ( \Delta F - 2 z D_y F ) \, v^2 \text{.}
\end{align}
Moreover, for the remaining term in \eqref{ident2}, we expand
\begin{align}
\label{I_A} \mc{A}_0 \, v Sv &= \tfrac{1}{2} \mc{A}_0 \, \partial_t ( v^2 ) + \lambda \mc{A}_0 \, \nabla F \cdot \nabla ( v^2 ) + \lambda ( \Delta F - 2 z D_y F ) \mc{A}_0 \, v^2 \\
\notag &= \partial_t \big( \tfrac{1}{2} \mc{A}_0 \, v^2 \big) + \nabla \cdot ( \lambda \mc{A}_0 \nabla F \, v^2 ) - \tfrac{1}{2} \partial_t \mc{A}_0 \, v^2 \\
\notag &\qquad - \lambda \nabla F \cdot \nabla \mc{A}_0 \, v^2 - 2 z \lambda D_y F \mc{A}_0 \, v^2 \text{.}
\end{align}

Combining \eqref{I_Delta_0}--\eqref{I_A}, the estimate \eqref{ident2} then becomes
\begin{align}
\label{conjestimate} \tfrac{1}{4} e^{-2 \lambda F} | ( \partial_t + \Delta_{ \sigma, y } ) u |^2 &\geq \partial_t J^0_t + \nabla \cdot J^0 + 2 z \lambda D_y F \, | \nabla v |^2 \\
\notag &\qquad - 2 \lambda \, ( \nabla v \cdot \nabla^2 F \cdot \nabla v ) + \mc{A} \, v^2 \text{,}
\end{align}
where the zero-order coefficient $\mc{A}$ is given by
\begin{equation}
\label{coefA} \mc{A} := - \tfrac{1}{2} \partial_t \mc{A}_0 - \lambda \nabla F \cdot \nabla \mc{A}_0 - 2 z \lambda D_y F \mc{A}_0 + \tfrac{1}{2} \lambda \Delta ( \Delta F - 2 z D_y F ) \text{,}
\end{equation}
and where the scalar $J^t$ and vector field $J_0$ are given by
\begin{align}
\label{currents} J^t &= \tfrac{1}{2} \, | \nabla v |^2 + \tfrac{1}{2} \mc{A}_0 \, v^2 \text{,} \\
\notag J_0 &= \nabla v \partial_t v + 2 \lambda \, \nabla v ( \nabla F \cdot \nabla v ) - \lambda \, \nabla F \, | \nabla v |^2 + \lambda ( \Delta F - 2 z D_y F ) \, v \nabla v \\
\notag &\qquad - \tfrac{1}{2} \lambda \nabla ( \Delta F - 2 z D_y F ) \, v^2 + \lambda \mc{A}_0 \nabla F \, v^2 \text{.}
\end{align}

\subsubsection*{Step 2: First-order terms}

We record here the following identities for $F$:
\begin{align}
\label{hessianf} \nabla F &= \theta y^{-1+2p} \, \nabla y \text{,} \\
\notag \nabla^2 F &= -(1-2p) \theta y^{-2+2p} \, ( \nabla y \otimes \nabla y ) + \theta y^{-1+2p} \, \nabla^2 y \text{,} \\
\notag \Delta F &= -(1-2p) \theta y^{-2+2p} \, | \nabla y |^2 + \theta y^{-1+2p} \, \Delta y \text{.}
\end{align}
As a result, we see from \eqref{hessianf} that
\begin{align}
\label{first_order_1} &2 z \lambda D_y F \, | \nabla v |^2 - 2 \lambda \, ( \nabla v \cdot \nabla^2 F \cdot \nabla v ) \\
\notag &\quad = 2 z \lambda \theta y^{-1+2p} | \nabla y |^2 \, | \nabla v |^2 - 2 \lambda \theta y^{-1+2p} \, ( \nabla v \cdot \nabla^2 y \cdot \nabla v ) \\
\notag &\quad\qquad+ 2 (1-2p) \lambda \theta y^{-2+2p} \, ( D_y v )^2 \\
\notag &\quad \geq 2 \lambda \theta y^{-1+2p} \, [ \nabla v \cdot ( \eta z | \nabla y |^2 I - \nabla^2 y ) \cdot \nabla v ] + 2 (1-2p) \lambda \theta y^{-2+2p} \, ( D_y v )^2 \\
\notag &\quad\qquad + 2 (1-\eta) z \lambda \theta y^{-1+2p} \, ( D_y v )^2 \text{,}
\end{align}
for some $0 < \eta < 1$ whose value will be chosen later.

Applying the Hardy inequality (with $q = -1 + p$ and $q = -\frac{1}{2} + p$), we see that
\begin{align}
\label{first_order_2} &2 (1-2p) \lambda \theta y^{-2+2p} \, ( D_y v )^2 + 2 (1-\eta) z \lambda \theta y^{-1+2p} \, ( D_y v )^2 \\
\notag &\quad \geq \nabla \cdot J_H + \tfrac{1}{2} (1-2p) (3-2p)^2 \lambda \theta y^{-4+2p} | \nabla y |^4 \, v^2 \\
\notag &\qquad + 2 (1-\eta) (1-p)^2 z \lambda \theta y^{-3+2p} | \nabla y |^4 \, v^2 \\
\notag &\qquad - (1-2p) (3-2p) \lambda \theta y^{-3+2p} \Delta y | \nabla y |^2 \, v^2 - C' \lambda \theta y^{-2+2p} \, v^2 \text{,}
\end{align}
where the vector field $J_H$ is given by
\begin{align}
\label{current_hardy} J_H &:= (1-2p) (3-2p) \lambda \theta y^{-3+2p} \nabla y | \nabla y |^2 \, v^2 \\
\notag &\qquad + 2 (1-\eta) (1-p) z \lambda \theta y^{-2+2p} \nabla y | \nabla y |^2 \, v^2 \text{.}
\end{align}
In particular, we have collected all terms of order $y^{-2+2p}$ or better into the final negative term in the right-hand side of \eqref{first_order_2}.
Furthermore, any term containing $\nabla y \cdot \nabla^2 y \cdot \nabla y$ can be included in this negative term by default, since by Definition \ref{ep-bdf}, both $| \nabla y |^2 = 1$ and $\nabla y \cdot \nabla^2 y \cdot \nabla y = 0$ in the region $d_\Ga \leq d_0$.

Combining now \eqref{conjestimate} with \eqref{first_order_1}--\eqref{current_hardy} yields
\begin{align}
\label{first_order_10} &\tfrac{1}{4} e^{-2 \lambda F} | ( \partial_t + \Delta_{ \sigma, y } ) u |^2 - ( \partial_t J^t + \nabla \cdot J ) \\
\notag &\quad \geq 2 \lambda \theta y^{-1+2p} \, [ \nabla v \cdot ( \eta z | \nabla y |^2 I - \nabla^2 y ) \cdot \nabla v ] \\
\notag &\quad\qquad + \tfrac{1}{2} (1-2p) (3-2p)^2 \lambda \theta y^{-4+2p} | \nabla y |^4 \, v^2 \\
\notag &\quad\qquad + 2 (1-\eta) (1-p)^2 z \lambda \theta y^{-3+2p} | \nabla y |^4 \, v^2 \\
\notag &\quad\qquad - (1-2p) (3-2p) \lambda \theta y^{-3+2p} \Delta y | \nabla y |^2 \, v^2 \\
\notag &\quad\qquad + \mc{A} \, v^2 - C' \lambda \theta y^{-2+2p} \, v^2 \text{,}
\end{align}
where the vector field $J$ in \eqref{J-statement} can now be given explicitly by
\begin{align}
\label{current_J} J &:= J_0 + J_H \\
\notag &= \nabla v \partial_t v + 2 \lambda \, \nabla v ( \nabla F \cdot \nabla v ) - \lambda \, \nabla F \, | \nabla v |^2 + \lambda ( \Delta F - 2 z D_y F ) \, v \nabla v \\
\notag &\qquad - \tfrac{1}{2} \lambda \nabla ( \Delta F - 2 z D_y F ) \, v^2 + (1-2p) (3-2p) \lambda \theta y^{-3+2p} \nabla y | \nabla y |^2 \, v^2 \\
\notag &\qquad + 2 (1-\eta) (1-p) z \lambda \theta y^{-2+2p} \nabla y | \nabla y |^2 \, v^2 + \lambda \mc{A}_0 \nabla F \, v^2 \text{.}
\end{align}

Recalling that $y$ satisfies Definition \ref{ep-bdf}, we then have, for any $\xi \in \R^n$,
\begin{align}
\label{symmat} \eta z |\nab y|^2 |\xi|^2 - \xi \cdot \nab^2 y \cdot \xi &\geq \begin{cases}
\eta z |\xi|^2 &\quad d_\Gamma \leq d_0 \text{,} \\
\big( \tfrac{1}{2} \eta z - \ep' \big) |\xi|^2 &\quad d_0 < d_\Gamma < 2 d_0 \text{,} \\
\ep |\xi|^2 &\quad d_\Gamma \geq 2 d_0 \text{.}
\end{cases}
\end{align}
In particular, letting $\ep'$ be sufficiently small, and choosing
\begin{equation}
\label{eta_symmat} \eta := \tfrac{ 4 \ep' }{ z } \in ( 0, 1 ) \text{,}
\end{equation}
we obtain from \eqref{symmat} that
\begin{equation}
\label{symmat1} \eta z |\nab y|^2 |\xi|^2 - \xi \cdot \nab^2 y \cdot \xi \geq C | \xi |^2 \text{.}
\end{equation}
Furthermore, the same concavity properties \eqref{est-pair} also yield the following in $\Om$:
\begin{align}
\label{laplacianbound} - \De y \geq \begin{cases}
  0 &\quad d_\Gamma \leq d_0 \text{,} \\
  -	\ep' n &\quad d_0 < d_\Gamma < 2 d_0 \text{,} \\
  \ep n &\quad d_\Gamma \geq 2 d_0 \text{.}
\end{cases}
\end{align}
From \eqref{first_order_10} and \eqref{symmat1}, we now conclude
\begin{align}
\label{first_order_20} &\tfrac{1}{4} e^{-2 \lambda F} | ( \partial_t + \Delta_{ \sigma, y } ) u |^2 - ( \partial_t J_t + \nabla \cdot J ) \\
\notag &\quad \geq C \lambda \theta y^{-1+2p} \, | \nabla v |^2 + \tfrac{1}{2} (1-2p) (3-2p)^2 \lambda \theta y^{-4+2p} | \nabla y |^4 \, v^2 \\
\notag &\quad\qquad + 2 (1-\eta) (1-p)^2 z \lambda \theta y^{-3+2p} | \nabla y |^4 \, v^2 \\
\notag &\quad\qquad - (1-2p) (3-2p) \lambda \theta y^{-3+2p} \Delta y | \nabla y |^2 \, v^2 \\
\notag &\quad\qquad + \mc{A} \, v^2 - C \lambda \theta y^{-2+2p} \, v^2 \text{.}
\end{align}

\subsubsection*{Step 3: The zeroth order terms}

It remains to estimate the zero-order coefficient $\mc{A}$.
First, for $\mc{A}_0$, note from \eqref{aux} and \eqref{hessianf} that
\begin{equation}
\label{zero_order_0} \mc{A}_0 = \sigma y^{-2} + \lambda^2 \theta^2 y^{-2+4p} | \nabla y |^2 + 2 z \lambda \theta y^{-1+2p} | \nabla y |^2 + \tfrac{1}{2p} \lambda \theta' y^{2p} \text{.}
\end{equation}
Using that
\begin{equation}
\label{theta_deriv} | \theta' | \lesssim \theta^2 \text{,} \qquad | \theta'' | \lesssim \theta^3 \text{,}
\end{equation}
we then compute that
\begin{equation}
\label{zero_order_1} - \tfrac{1}{2} \partial_t \mc{A}_0 \geq - C' \lambda^2 \theta^3 y^{-2+2p} \text{.}
\end{equation}

Next, using \eqref{hessianf} and \eqref{zero_order_0} we expand
\begin{align}
\label{zero_order_2} - \lambda \nabla F \cdot \nabla \mc{A}_0 &= - \lambda \theta y^{-1+2p} D_y \mc{A}_0 \\
\notag &\geq 2 \sigma \lambda \theta y^{-4+2p} | \nabla y |^2 + 2 (1-2p) \lambda^3 \theta^3 y^{-4+6p} | \nabla y |^4 \\
\notag &\qquad - \lambda^3 \theta^3 y^{-3+6p} ( \nabla y \cdot \nabla^2 y \cdot \nabla y ) \\
\notag &\qquad + 2 z (1-2p) \lambda^2 \theta^2 y^{-3+4p} | \nabla y |^4 - C' \lambda^2 \theta^2 y^{-2+2p} \text{,}
\end{align}
as well as
\begin{align}
\label{zero_order_3} - 2 z \lambda D_y F \, \mc{A}_0 &= - 2 z \lambda \theta y^{-1+2p} | \nabla y |^2 \, \mc{A}_0 \\
\notag &\geq - 2 \sigma z \lambda \theta y^{-3+2p} | \nabla y |^2 - 2 z \lambda^3 \theta^3 y^{-3+6p} | \nabla y |^4 \\
\notag &\qquad - C' \lambda^2 \theta^2 y^{-2+2p} \text{.}
\end{align}
Lastly, observe from \eqref{hessianf} that
\begin{align}
\label{zero_order_4} &\tfrac{1}{2} \lambda \Delta ( \Delta F - 2 z D_y F ) \\
\notag &\quad = \tfrac{1}{2} \lambda \theta \Delta [ -(1-2p) y^{-2+2p} \, | \nabla y |^2 + y^{-1+2p} \, \Delta y - 2 z y^{-1+2p} | \nabla y |^2 ] \\
\notag &\quad \geq - (1-2p) (1-p) (3-2p) \lambda \theta y^{-4+2p} | \nabla y |^4 \\
\notag &\quad\qquad + 2 (1-2p) (1-p) \lambda \theta y^{-3+2p} | \nabla y |^2 \Delta y \\
\notag &\quad\qquad - 2 z (1-2p) (1-p) \lambda \theta y^{-3+2p} | \nabla y |^4 - C' \lambda \theta y^{-2+2p} \text{.}
\end{align}

Thus, combining \eqref{coefA} and \eqref{zero_order_1}--\eqref{zero_order_4} yields
\begin{align}
\label{zero_order_10} \mc{A} &\geq [ 2 \sigma | \nabla y |^{-2} - (1-2p) (1-p) (3-2p) ] \lambda \theta y^{-4+2p} | \nabla y |^4 \\
\notag &\qquad - [ 2 \sigma z | \nabla y |^{-2} + 2 z (1-2p) (1-p) ] \lambda \theta y^{-3+2p} | \nabla y |^4 \\
\notag &\qquad + 2 (1-2p) (1-p) \Delta y | \nabla y |^{-2} \, \lambda \theta y^{-3+2p} | \nabla y |^4 \\
\notag &\qquad + 2 \lambda^3 \theta^3 [ (1-2p) - zy ] y^{-4+6p} | \nabla y |^4 \\
\notag &\qquad + 2 z (1-2p) \lambda^2 \theta^2 y^{-3+4p} | \nabla y |^4 - C' \lambda^2 \theta^3 y^{-2+2p} \text{.}
\end{align}
Putting \eqref{zero_order_10} together with our estimate \eqref{first_order_20}, we then have
\begin{align}
\label{zero_order_11} &\tfrac{1}{4} e^{-2 \lambda F} | ( \partial_t + \Delta_{ \sigma, y } ) u |^2 - ( \partial_t J_t + \nabla \cdot J ) \\
\notag &\quad \geq C \lambda \theta y^{-1+2p} \, | \nabla v |^2 + 2 \big( p^2 - 2p + \sigma | \nabla y |^{-2} + \tfrac{3}{4} \big) \lambda \theta y^{-4+2p} | \nabla y |^4 \, v^2 \\
\notag &\quad\qquad + 2 z [ p - \eta (1-p) - \sigma | \nabla y |^{-2} ] \lambda \theta y^{-3+2p} | \nabla y |^4 \, v^2 \\
\notag &\quad\qquad - (1-2p) \lambda \theta y^{-3+2p} \Delta y | \nabla y |^2 \, v^2 \\
\notag &\quad\qquad + 2 \lambda^3 \theta^3 [ (1-2p) - zy ] y^{-4+6p} | \nabla y |^4 \, v^2 \\
\notag &\quad\qquad + 2 z (1-2p) \lambda^2 \theta^2 y^{-3+4p} | \nabla y |^4 \, v^2 - C' \lambda^2 \theta^3 y^{-2+2p} \, v^2 \text{.}
\end{align}

Choosing $\varepsilon'$ sufficiently small, so that \eqref{eta_symmat} implies
\[
p - \eta (1-p) \geq \tfrac{1}{2} p + C \varepsilon \text{,}
\]
and recalling \eqref{mu_kappa}, \eqref{eqz}, and \eqref{laplacianbound}, we conclude that
\begin{align}
\label{zero_order_12} &\tfrac{1}{4} e^{-2 \lambda F} | ( \partial_t + \Delta_{ \sigma, y } ) u |^2 - ( \partial_t J_t + \nabla \cdot J ) \\
\notag &\quad \geq C \lambda \theta y^{-1+2p} \, | \nabla v |^2 + C \lambda^3 \theta^3 y^{-4+6p} | \nabla y |^4 \, v^2 \\
\notag &\quad\qquad + 2 \big( p^2 - 2p + \sigma | \nabla y |^{-2} + \tfrac{3}{4} \big) \lambda \theta y^{-4+2p} | \nabla y |^4 \, v^2 \\
\notag &\quad\qquad + z ( p - 2 \sigma | \nabla y |^{-2} ) \lambda \theta y^{-3+2p} | \nabla y |^4 \, v^2 - C' \lambda^2 \theta^3 y^{-3+4p} \, v^2 \text{.}
\end{align}
We now claim that \eqref{zero_order_12} implies
\begin{align}
\label{conjestimate2} &\tfrac{1}{4} e^{-2 \lambda F} | ( \partial_t + \Delta_{ \sigma, y } ) u |^2 - ( \partial_t J_t + \nabla \cdot J ) \\
\notag &\quad \geq C \lambda \theta y^{-1+2p} \, | \nabla v |^2 + C ( \lambda^3 \theta^3 y^{-4+6p} + \lambda \theta y^{-3+2p} ) | \nabla y |^4 \, v^2 \\
\notag &\quad\qquad - C' \lambda^2 \theta^3 y^{-3+4p} \, v^2 \text{.}
\end{align}

First, when $\sigma \leq 0$, then \eqref{conjestimate2} follows from \eqref{mu_kappa}, \eqref{zero_order_12}, and the inequalities
\begin{align*}
2 \sigma | \nabla y |^{-2} \, \lambda \theta y^{-4+2p} | \nabla y |^4 \, v^2 &\geq 2 \sigma \, \lambda \theta y^{-4+2p} | \nabla y |^4 \, v^2 - C' \lambda \theta y^{-2+2p} \, v^2 \text{,} \\
2 \big( p^2 - 2p + \sigma + \tfrac{3}{4} \big) \lambda \theta y^{-4+2p} | \nabla y |^4 \, v^2 &\geq 0 \text{,} \\
z ( p - 2 \sigma | \nabla y |^{-2} ) \lambda \theta y^{-3+2p} | \nabla y |^4 \, v^2 &\geq z p \, \lambda \theta y^{-3+2p} | \nabla y |^4 \text{.}
\end{align*}
(The first inequality above follows, since $| \nab y |^{-2} - 1$ vanishes near $\Ga$ and is bounded from below on $\Om$ by a negative constant.)
On the other hand, the case $\sigma > 0$ follows from \eqref{mu_kappa}, \eqref{eqz}, \eqref{zero_order_12}, and the inequalities
\begin{align*}
2 \big( p^2 - 2p + \tfrac{3}{4} \big) \lambda \theta y^{-4+2p} | \nabla y |^4 \, v^2 &\geq 0 \text{,} \\
2 \sigma \, \lambda \theta y^{-4+2p} | \nabla y |^2 \, v^2 - z \, 2 \sigma \, \lambda \theta y^{-3+2p} | \nabla y |^2 \, v^2 &\geq 4 p \sigma \, \lambda \theta y^{-4+2p} | \nabla y |^2 \, v^2 \text{.}
\end{align*}
Combining the above two cases completes the proof of \eqref{conjestimate2}.

\subsubsection*{Step 4: Completion of the proof}

Since $x_\ast$ is the only critical point of $y$, then $| \nab y |$ is bounded away from zero on $\Omega \setminus B_\delta ( x_* )$, and hence \eqref{conjestimate2} becomes
\begin{align*}
&e^{-2 \lambda F} | ( \partial_t + \Delta_{ \sigma, y } ) u |^2 - 4 ( \partial_t J_t + \nabla \cdot J ) \\
&\quad \geq C \lambda \theta y^{-1+2p} \, | \nabla v |^2 + C ( \lambda^3 \theta^3 y^{-4+6p} + \lambda \theta y^{-3+2p} ) \mathbbm{1}_{ \Omega \setminus B_\delta ( x_* ) } \, v^2 \\
&\quad \qquad - C' \lambda^2 \theta^3 y^{-3+4p} \, v^2 \text{.}
\end{align*}
Furthermore, on $\Omega \setminus B_\delta ( x_* )$, the negative term in the right-hand side of the above can be absorbed into the positive terms by taking $\lambda$ sufficiently large, and hence
\begin{align}
\label{conjestimate3} &e^{-2 \lambda F} | ( \partial_t + \Delta_{ \sigma, y } ) u |^2 - 4 ( \partial_t J_t + \nabla \cdot J ) \\
\notag &\quad \geq C \lambda \theta y^{-1+2p} \, | \nabla v |^2 + C ( \lambda^3 \theta^3 y^{-4+6p} + \lambda \theta y^{-3+2p} ) \mathbbm{1}_{ \Omega \setminus B_\delta ( x_* ) } \, v^2 \\
\notag &\quad\qquad - C' \lambda^2 \theta^3 y^{-3+4p} \mathbbm{1}_{ B_\delta ( x_* ) } \, v^2 \text{.}
\end{align}
The desired estimate \eqref{pointC} now follows from \eqref{conjestimate3} by \eqref{v_G}, and by noting that
\begin{align*}
e^{-2\la F} y^{-1+2p} \, |\nab u|^2 &\leq C y^{-1+2p} |\nab v|^2 + C \la^2 \te^2 y^{-3+6p} |\nab y|^2 v^2 \\
&\leq C y^{-1+2p} |\nab v|^2 + C \la^2 \te^2 y^{-4+6p} \mathbbm{1}_{ \Omega \setminus B_\delta ( x_* ) } v^2 \\
&\qquad + C' \la^2 \te^2 y^{-3+4p} \mathbbm{1}_{ B_\delta ( x_* ) } v^2 \text{.}
\end{align*}

It remains to prove the inequalities \eqref{Jt-statement} and \eqref{J-statement}.
For the latter bound, we apply \eqref{v_G}, \eqref{hessianf}, \eqref{current_J}, and \eqref{zero_order_0} to obtain
\begin{align*}
\nab y \cdot J - \partial_t v D_y v &\leq 2 \lambda \theta y^{-1+2p} \, ( D_y v )^2 - \lambda \theta y^{-1+2p} \, | \nab v |^2 \\
&\qquad + C \lambda \theta y^{-2+2p} \, |v| | D_y v | + C \lambda \theta y^{-3+2p} \, v^2 \\
&\leq C \lambda \theta y^{-1+2p} \, ( D_y v )^2 + C \lambda \theta y^{-3+2p} \, v^2 \\
&\leq C e^{-2 \lambda F} \, \lambda \theta y^{-1+2p} \, ( D_y v )^2 + C e^{-2 \lambda F} \, \lambda^3 \theta^3 y^{-3+2p} \, u^2 \text{,}
\end{align*}
whenever $d_\Ga < d_0$ (so that $| \nab y |^2 = 1$ by Definition \ref{ep-bdf}).
Notice, in particular, that terms containing derivatives of $v$ in directions other than along $\nab y$ are non-positive and hence can be omitted.
The desired \eqref{J-statement} now follows from the above, \eqref{GF}, \eqref{v_G}, and \eqref{theta_deriv}.
Similarly, for \eqref{Jt-statement}, we estimate
\begin{align*}
| J^t | &\leq C | \nab v |^2 + C \lambda^2 \theta^2 y^{-2} \, v^2 \\
&\leq C e^{-2 \lambda F} \, | \nab u |^2 + C e^{-2 \lambda F } \, \lambda^2 \theta^2 y^{-2} \, u^2 \text{,}
\end{align*}
where we applied \eqref{v_G}, \eqref{currents}, \eqref{hessianf}, and \eqref{zero_order_0}.
\end{proof}

\subsection{The global Carleman estimate}

In this subsection, we will state and prove the precise version our main global Carleman estimate.
Before doing so, we must first improve the pointwise estimate \eqref{pointC} by eliminating the negative term in the right-hand side that is supported near the critical point of the boundary defining function.
This is accomplished below by summing two instances of \eqref{pointC}, using two boundary defining functions with distinct critical points.
	
\begin{lemma} \label{L.gluing}
Fix $T > 0$, and let $\sigma, p \in \R$ satisfy \eqref{mu_kappa}.
There exist $C, \ep, \ep', \lambda_0 > 0$ (depending on $T, \Omega, d_0, \sigma, p$) and an $( \ep, \ep' )$-boundary defining pair $(y_1, y_2)$ such that the following holds for all $u \in C^2 ( [ 0, T ] \times \Om )$ and $\lambda \geq \lambda_0$,
\begin{align}
\label{Carleman00} &\sum_{ j = 1 }^2 e^{-2 \la F_j} | ( \pm \partial_t + \Delta_{ \sigma, y_j } ) u |^2 - 4 \sum_{j=1}^2 ( \pd_t J_j^t + \Div J_j ) \\
\notag &\quad \geq C \sum_{ j = 1 }^2 e^{ -2 \la F_j } \big[ \la \te y_j^{-1+2p} | \nab u |^2 + ( \la^3 \te^3 y_j^{-4+6p} + \la \te y_j^{-3+2p} ) u^2 \big] \text{,}
\end{align}
where $F_j$ ($j = 1, 2$) is given by
\begin{equation}
\label{weightmu} F_j(t,x) := \theta (t) \Big( \tfrac{1}{2p} y_j(x)^{2p} + \be_j \Big) \text{,} \qquad \theta (t) := \frac{1}{ t (T-t) } \text{,}
\end{equation}
for appropriately chosen $\beta_j > 0$, where the scalars $J_j^t$ satisfy
\begin{equation}
\label{Jt-statement-j} | J_j^t | \leq C e^{-2 \lambda F_j} \, | \nab u |^2 + C e^{-2 \lambda F_j} \, \lambda^2 \theta^2 y_j^{-2} \, u^2 \text{,}
\end{equation}
and where the vector fields $J_j$ satisfy, sufficiently near $\Ga$,
\begin{align}
\label{J-statement-j} \nab y_j \cdot J_j - e^{-2 \lambda F_j} \partial_t u D_{ y_j } u &\leq C e^{-2 \lambda F_j} \, \lambda \theta y_j^{-1+2p} \, ( D_{ y_j } u )^2 \\
\notag &\qquad + C e^{-2 \lambda F_j} \, \lambda^3 \theta^3 y_j^{-3+2p} \, u^2 \text{.}
\end{align}
\end{lemma}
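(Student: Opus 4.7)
The plan is to apply Lemma~\ref{L.pointC} to each of the boundary defining functions $y_1, y_2$ produced by Lemma~\ref{L.convex}, then sum the two pointwise inequalities. The key geometric observation is that the negative term in \eqref{pointC} is supported in a small ball around the unique critical point of the corresponding $y$; since $x_{1,*} \neq x_{2,*}$, shrinking the radius $\delta$ ensures $B_\delta(x_{1,*}) \cap B_\delta(x_{2,*}) = \emptyset$, so on each bad ball the opposing pointwise estimate supplies the full good $u^2$-term. The subtle point is that this good term, weighted by $e^{-2\lambda F_2}$ on $B_\delta(x_{1,*})$, must dominate the bad term weighted by $e^{-2\lambda F_1}$; this calls for choosing $\beta_1, \beta_2$ in \eqref{weightmu} so that $F_1 > F_2$ strictly on $B_\delta(x_{1,*})$ and $F_2 > F_1$ strictly on $B_\delta(x_{2,*})$, uniformly in $t \in (0, T)$.

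To set up this strict ordering, I would use the explicit form of $y_2$ from Lemma~\ref{L.convex}: on a Morse chart centered at $x_{1,*}$ one has $y_2 = y_1 + \delta' \chi\, b \cdot z$ with $z(x_{1,*}) = 0$, $z(x_{2,*}) = -\tfrac{1}{2}\delta' A^{-1} b$, and $A$ negative-definite. Direct computation then yields $y_1(x_{1,*}) = y_2(x_{1,*})$ while $y_2(x_{2,*}) - y_1(x_{2,*}) = -\tfrac{1}{2}(\delta')^2 b \cdot A^{-1} b > 0$. Setting $f_j := \tfrac{1}{2p} y_j^{2p} + \beta_j$, the difference $f_1 - f_2$ equals $\beta_1 - \beta_2$ at $x_{1,*}$, and at $x_{2,*}$ equals $\beta_1 - \beta_2$ minus a strictly positive quantity of order $(\delta')^2$. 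Choosing $\beta_1 - \beta_2$ to be a small positive number strictly less than $\tfrac{1}{2p}\bigl(y_2^{2p}(x_{2,*}) - y_1^{2p}(x_{2,*})\bigr)$ and then shrinking the radius $\delta$ of the bad balls makes $f_1 - f_2$ strictly positive on $B_\delta(x_{1,*})$ and strictly negative on $B_\delta(x_{2,*})$; multiplying by $\theta(t) > 0$ transfers both signs to $F_1 - F_2$, with a gap bounded below uniformly in $t$ by a positive multiple of $\theta_{\min} = 4/T^2$.

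With this choice in place, I would sum the two pointwise inequalities and analyze the result region by region. On $\Omega \setminus (B_\delta(x_{1,*}) \cup B_\delta(x_{2,*}))$ each estimate already furnishes its full positive $u^2$-term, and the $|\nabla u|^2$-contributions add pointwise everywhere. On $B_\delta(x_{1,*})$, where $y_1, y_2$ are uniformly bounded above and below by positive constants, the $y_1$-estimate contributes only the negative term $-C' \lambda^2 \theta^3 e^{-2\lambda F_1} y_1^{-3+4p} u^2$, while the $y_2$-estimate contributes its full good term $C e^{-2\lambda F_2} \lambda^3 \theta^3 y_2^{-4+6p} u^2$. Since $F_1 - F_2 \geq c \theta_{\min} > 0$ on this set, the ratio $e^{2\lambda(F_2 - F_1)}$ is exponentially small in $\lambda$, so that for $\lambda$ sufficiently large a small constant fraction of the $y_2$-good term dominates both the $y_1$-bad term (the power of $\lambda$ increases from $\lambda^2$ to $\lambda^3$) and a constant multiple of the $y_1$-version positive term $e^{-2\lambda F_1} \lambda^3 \theta^3 y_1^{-4+6p} u^2$ demanded by the conclusion, leaving a positive residual that supplies the $y_2$-version. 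The analogous bookkeeping on $B_\delta(x_{2,*})$ is symmetric. The boundary expressions \eqref{Jt-statement-j}, \eqref{J-statement-j} pass through unchanged from Lemma~\ref{L.pointC} applied to each $y_j$.

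The main obstacle is precisely the delicate choice of $\beta_1, \beta_2$ just described: since $y_1$ and $y_2$ agree at $x_{1,*}$, the shift $\beta_1 - \beta_2$ alone carries the burden of making $F_1 > F_2$ there, while at $x_{2,*}$ the natural ordering is reversed by an amount of order $(\delta')^2$, which constrains the same shift from above. Identifying this nonempty admissible window, and verifying that the resulting gap survives multiplication by $\theta(t)$ uniformly in $t \in (0,T)$, is the genuinely new ingredient of the lemma; once it is in place, the rest of the argument is a standard absorption of lower powers of $\lambda$ by higher ones.
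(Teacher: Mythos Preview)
Your proposal is correct and follows the same overall strategy as the paper: apply Lemma~\ref{L.pointC} to each $y_j$, sum, and choose $\beta_1,\beta_2$ so that on each bad ball $B_\delta(x_{j,*})$ the weight ordering lets the good $u^2$-term from the other index absorb both the negative term and supply the missing positive contribution. The difference lies only in how the $\beta_j$ are selected. You work directly with the Morse chart from Lemma~\ref{L.convex}, compute $y_1(x_{1,*})=y_2(x_{1,*})$ and $y_2(x_{2,*})>y_1(x_{2,*})$, and pick $\beta_1-\beta_2$ in the resulting open window to force a strict gap $f_1>f_2$ on $B_\delta(x_{1,*})$ and $f_2>f_1$ on $B_\delta(x_{2,*})$; the exponential factor $e^{-2\lambda\theta(f_1-f_2)}$ then does more than is needed. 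The paper instead introduces level values $r_1<R_1$, $r_2<R_2$ with $B_\delta(x_1)\subset\{y_1\geq r_1\}\cap\{y_2\leq r_2\}$ and symmetrically for $B_\delta(x_2)$, and imposes the single linear condition $\beta_2-\beta_1=\tfrac{1}{2p}(r_1^{2p}-r_2^{2p})$, i.e.\ $f_1(r_1)=f_2(r_2)$. Monotonicity of $f_j$ then gives only the non-strict inequality $e^{-2\lambda F_j}\leq C\,e^{-2\lambda F_{j^*}}$ on $B_\delta(x_j)$ with $C$ independent of $\lambda$, and the absorption relies solely on the extra factor of $\lambda$ (the bad term carries $\lambda^2$, the good one $\lambda^3$). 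The paper's route is somewhat cleaner in that it uses only the abstract properties of a boundary defining pair (distinct maxima) rather than the explicit perturbation formula for $y_2$; your route buys a stronger exponential gap at the cost of that extra computation.
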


\begin{figure}[t]
\begin{center}
\includegraphics[width=0.88\textwidth]{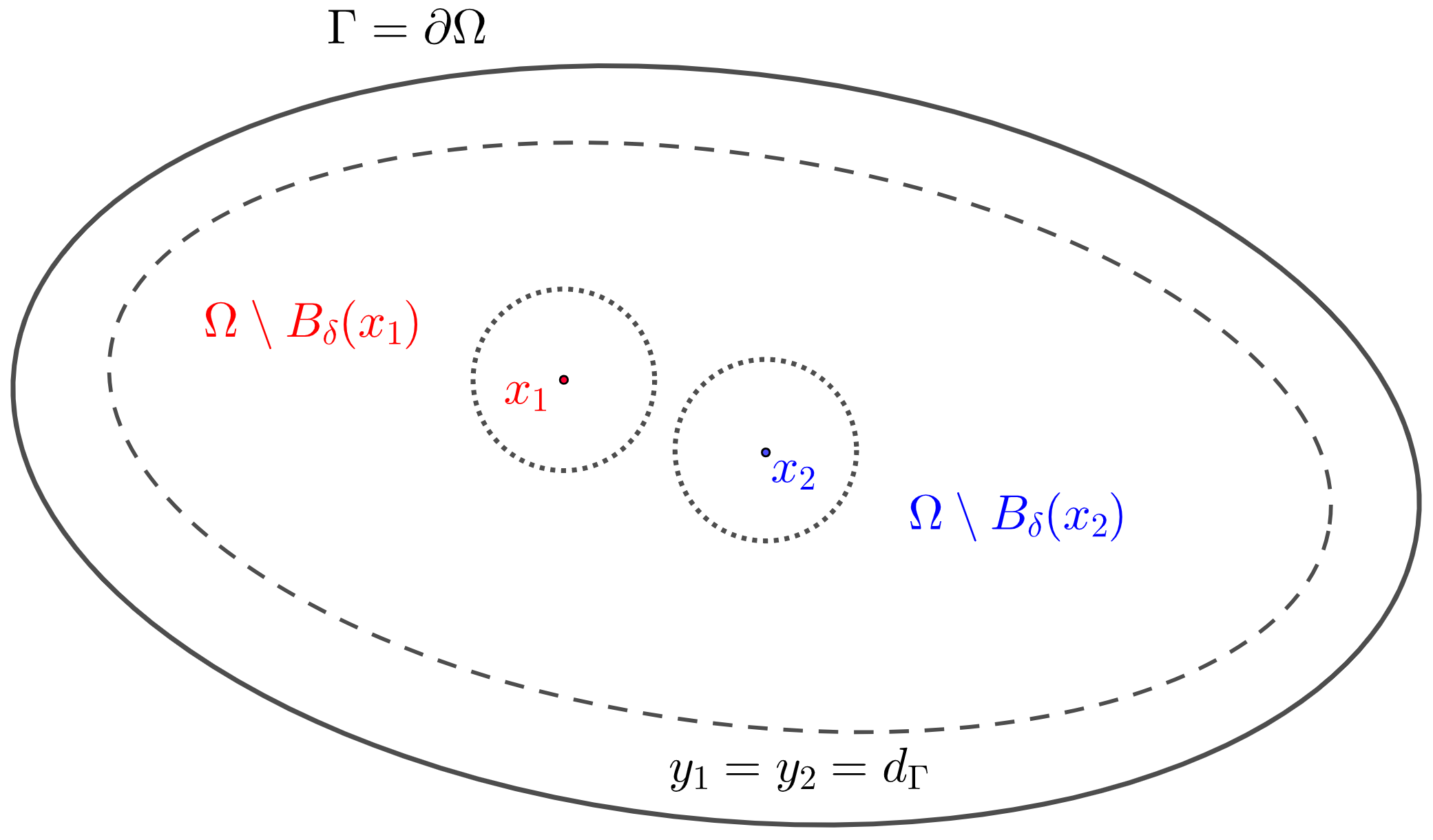}
\caption{The domain $\Om$ with convex boundary $\Ga$ is depicted together with balls centered at the critical points $x_1,x_2$ of two good boundary defining functions $y_1, y_2$.
In a neighborhood of $\Ga$, these functions agree with the distance to the boundary $d_\Ga$.}\label{fig.domain}
\end{center}
\end{figure}
	
\begin{proof}
Lemma \ref{L.convex} yields a $( \ep, \ep' )$-boundary defining $(y_1, y_2)$ satisfying the properties of Definition \ref{ep-pair}, for any sufficiently small $\ep$ and $\ep'$.
In particular, $y_j$ ($j = 1, 2$) has a unique critical point $x_j \in \Omega$, with $d_\Gamma ( x_j ) > 2 d_0$, at which it attains its maximum $R_j := y_j (x_j)$.
Since $x_j$ is the (unique) global maximum of $y_j$, and since $x_1 \neq x_2$, there exist $\delta > 0$, $0 < r_1 < R_1$, and $0 < r_2 < R_2$ such that
\begin{align}
\label{balls_disjoint} B_\delta ( x_1 ) \cap B_\delta ( x_2 ) &= \emptyset \text{,} \\
\notag \{ r_1 \leq y_1 \leq R_1 \} \cap \{ y_2 \leq r_2 \} &\supseteq B_\delta (x_1) \text{,} \\
\notag \{ r_2 \leq y_2 \leq R_2 \} \cap \{ y_1 \leq r_1 \} &\supseteq B_\delta (x_2) \text{.}
\end{align}
See Figure \ref{fig.domain} for an illustration of this setting.
		
We can thus apply Lemma \ref{L.pointC} with $y = y_j$, our given $p, \sigma$, the above $\delta$, and sufficiently large $\lambda > 0$.
Summing both estimates, we derive
\begin{align}
\label{pointCj} &\sum_{ j = 1 }^2 e^{-2 \la F_j } | ( \pm \partial_t + \Delta_{ \sigma, y_j } u ) |^2 - 4 \sum_{ j = 1 }^2 ( \pd_t J_j^t + \Div J_j ) \\
\notag &\quad \geq C \la \te \sum_{ j = 1 }^2 e^{-2 \la F_j} y_j^{-1+2p} | \nab u |^2 - C' \la^2 \te^3 \sum_{ j = 1 }^2 e^{-2 \la F_j } y_j^{-3+4p} \mathbbm{1}_{ B_\delta (x_j) } \, u^2 \\
\notag &\quad\qquad + C \sum_{ j = 1 }^2 e^{-2 \la F_j} ( \la^3 \te^3 y_j^{-4+6p} + \la \te y_j^{-3+2p} ) \mathbbm{1}_{ \Omega \setminus B_\delta (x_j) } \, u^2 \text{.}
\end{align}
Furthermore, we write the Carleman weights $F_j$ as
\[
F_j(t,x) = \te(t) f_j(y_j(x)) \text{,} \qquad f_j (r) := \tfrac{1}{2p} r^{2p} + \be_j \text{,}
\]
and we choose $\be_1, \be_2 > 0$ to satisfy
\[
\be_2 - \be_1 := \tfrac{1}{2p} ( r_1^{2p} - r_2^{2p} ) \text{.}
\]
Notice that with the above choice, we have
\begin{equation}
\label{lambdaj} f_1 (r_1) = f_2 (r_2) \text{.}
\end{equation}

Then, for each $j = 1, 2$ and $j^* := 3 - j$, we have that
\begin{align}
\label{crucial_absorb} e^{ -2 \la F_j } y_j^{ -3 + 4p } \mathbbm{1}_{ B_\delta ( x_j ) } &\leq e^{ -2 \la \theta (t) f_j ( r_j ) } y_j^{ -4 + 6p } \mathbbm{1}_{ B_\delta ( x_j ) } \\
\notag &\leq C e^{ -2 \la \theta (t) f_{j^*} ( r_{j^*} ) } y_{ j^* }^{ -4 + 6p } \mathbbm{1}_{ B_\delta ( x_j ) } \\
\notag &\leq C e^{ -2 \la F_{j^*} } y_{ j^* }^{ -4 + 6p } \mathbbm{1}_{ \Omega \setminus B_\delta ( x_{ j^* } ) } \text{,}
\end{align}
with $C > 0$ independent of $\lambda$.
(Here, the first and third steps in \eqref{crucial_absorb} follow from \eqref{balls_disjoint} and the monotonicity of $f_1$ and $f_2$, while the second step is a consequence of \eqref{lambdaj} and the fact that both $y_j$, $y_{ j^* }$ are bounded away from $0$ on $B_\delta ( x_j )$.)

Applying \eqref{crucial_absorb} and taking $\lambda$ large enough, the negative term in the right-hand side of \eqref{pointCj} can be absorbed in the subsequent positive term, and we arrive at
\begin{align*}
&\sum_{ j = 1 }^2 e^{-2 \la F_j} | ( \pm \partial_t + \Delta_{ \sigma, y_j } ) u |^2 - 4 \sum_{ j = 1 }^2 ( \pd_t J_j^t + \Div J_j ) \\
\notag &\quad \geq C \sum_{ j = 1 }^2 e^{-2 \la F_j} \big[ \la \te y_j^{-1+2p} | \nab u |^2 + ( \la^3 \te^3 y_j^{-4+6p} + \la \te y_j^{-3+2p} ) \mathbbm{1}_{ \Omega \setminus B_\delta (x_j) } \, u^2 \big] \text{.}
\end{align*}
Finally, the desired \eqref{Carleman00} follows by noting that the factor $\mathbbm{1}_{ \Omega \setminus B_\delta ( x_j ) }$ in the above can be removed---this is because $\Omega$ is covered by $\Omega \setminus B_\delta ( x_1 )$ and $\Omega \setminus B_\delta ( x_2 )$, and both $y_1$, $y_2$ are bounded away from zero on $B_\delta ( x_1 ) \cup B_\delta ( x_2 )$.
\end{proof}

We can now state our main global Carleman estimate:

\begin{theorem} \label{T.preciseC}
Fix $T > 0$, and let $\sigma, p \in \R$ satisfy \eqref{mu_kappa}.
Then, there exist constants $C, \ep, \ep', \lambda_0 > 0$ (depending on $T, \Omega, d_0, \sigma, p$) and an $( \ep, \ep' )$-boundary defining pair $(y_1, y_2)$ such that the following Carleman estimate holds,
\begin{align}
\label{intC} &C \sum_{j=1}^2 \int_{ [0,T] \times \Om } e^{-2 \la F_j} \big[ \lambda \theta y_j^{-1+2p} |\nab u|^2 + ( \la^3 \te^3 y_j^{-4+6p} + \la \te y_j^{-3+2p} ) u^2 \big] \\
\notag &\quad \leq \limsup_{ \delta \searrow 0 } \sum_{ j = 1 }^2 \int_{ [0,T] \times \{ y_j = \delta \} } e^{-2 \lambda F_j} [ \lambda \theta y_j^{-1+2p} \, ( D_{ y_j } u )^2 + \lambda^3 \theta^3 y_j^{-3+2p} \, u^2 ] \\
\notag &\quad\qquad + \limsup_{ \delta \searrow 0 } \sum_{ j = 1 }^2 \bigg| \int_{ [0,T] \times \{ y_j = \delta \} } e^{-2 \lambda F_j} \, \partial_t u D_{ y_j } u \bigg| \\
\notag &\quad\qquad + \sum_{j=1}^2 \int_{ [0,T] \times \Om } e^{-2 \la F_j} | ( \pm \partial_t + \Delta_{ \sigma, y_j } ) u |^2 \text{,}
\end{align}
for all $\lambda \geq \lambda_0$ and for all $u \in C^2 ( [ 0, T ] \times \Om )$ having finite energy,
\begin{equation}
\label{finite_energy} \sup_{ t \in [ 0, T ] } \int_{ \{t\} \times \Omega } ( | \nab u |^2 + d_\Ga^{-2} u^2 ) < \infty \text{,}
\end{equation}
and where both $F_j$ ($j = 1, 2$) and $\te$ are defined as in \eqref{weightmu}.
\end{theorem}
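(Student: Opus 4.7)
The plan is to integrate the pointwise inequality \eqref{Carleman00} from Lemma \ref{L.gluing} over the space-time region $[0,T] \times \{ y_j > \delta \}$, apply the divergence theorem to the $\partial_t J_j^t + \Div J_j$ contributions, and then take $\limsup_{\delta \searrow 0}$. Since by Definition \ref{ep-bdf} each $y_j$ is strictly positive on $\Om$ with $y_j = d_\Ga$ on $\{ d_\Ga < d_0 \}$, the sub-level regions $\{ y_j > \delta \}$ exhaust $\Om$ as $\delta \searrow 0$, and for $\delta$ small the level set $\{ y_j = \delta \}$ agrees with $\{ d_\Ga = \delta \}$ with outward unit normal $-\nab y_j$ (using $|\nab y_j| = 1$ near $\Ga$). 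After integration, the target estimate \eqref{intC} will follow by rearranging the pointwise inequality as
\[
C \sum_{j} \iint_{[0,T] \times \{y_j > \delta\}} (\text{good terms}) \leq \sum_j \iint e^{-2\la F_j} |(\pm \pd_t + \De_{\si, y_j}) u|^2 - 4 \sum_j (\text{time bdry}) + 4 \sum_j (\text{spatial bdry}),
\]
where the ``good terms'' are precisely those appearing on the left-hand side of \eqref{intC}, and then passing to the $\limsup$. Monotone convergence handles the bulk side since the integrands are nonnegative.

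Next, the plan is to show that the time-boundary contributions $\int_{\{y_j > \delta\}} J_j^t(t,\cdot)\, dx$ vanish at $t = 0$ and $t = T$. Using \eqref{Jt-statement-j}, these are dominated by $e^{-2\la F_j} (| \nab u |^2 + \la^2 \te^2 y_j^{-2} u^2 )$; since $F_j \geq \be_j \te(t)$ with $\te(t) \to +\infty$ as $t \to 0^+, T^-$, the exponential weight decays faster than any power of $\te$, and the finite-energy hypothesis \eqref{finite_energy} (which controls $| \nab u |^2 + d_\Ga^{-2} u^2 \asymp | \nab u |^2 + y_j^{-2} u^2$ uniformly in $t$) gives the required integrability in $x$, so the product vanishes in the limit. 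Fubini then reduces the time-derivative term to zero.

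For the spatial-boundary contributions, applying the divergence theorem with the outward normal $-\nab y_j$ gives $-4 \int_0^T \!\! \int_{\{y_j = \delta\}} \nab y_j \cdot J_j\, dS\, dt$ (up to a harmless factor $| \nab y_j | = 1$). For $\delta$ small, \eqref{J-statement-j} applies, so
\[
\nab y_j \cdot J_j \leq e^{-2\la F_j}\, \pd_t u\, D_{y_j} u + C e^{-2\la F_j} \big[ \la \te y_j^{-1+2p} (D_{y_j} u)^2 + \la^3 \te^3 y_j^{-3+2p} u^2 \big];
\]
taking absolute values on the indefinite-sign transport term $\pd_t u\, D_{y_j} u$ produces exactly the three families of boundary integrals appearing on the right-hand side of \eqref{intC}. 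Taking $\limsup_{\delta \searrow 0}$ finishes the proof.

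The main obstacle is the rigorous vanishing of the time-boundary terms, which is the only place where the finite-energy hypothesis \eqref{finite_energy} is actually used; everything else is a routine rearrangement of the already established pointwise inequality. The reason the $\limsup$ (rather than a genuine limit) appears in the statement is that we do not assume enough regularity of $u$ at $\Ga$ to ensure boundary traces of $D_{y_j} u$ and $\pd_t u$ converge as $\delta \searrow 0$—the Carleman inequality only asserts that whatever the limiting behavior of the boundary quantities, the bulk side is bounded by their supremum limit.
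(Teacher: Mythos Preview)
Your proposal is correct and follows essentially the same route as the paper's proof: integrate the pointwise inequality \eqref{Carleman00} from Lemma \ref{L.gluing} over $[0,T]\times\{y_j>\delta\}$, use the divergence theorem, kill the time-boundary terms via \eqref{Jt-statement-j} together with $F_j\geq \beta_j\theta$ and the finite-energy hypothesis, bound the spatial-boundary terms using \eqref{J-statement-j}, and pass to the $\limsup$. One small sign slip: since the outward normal to $\{y_j>\delta\}$ is $-\nabla y_j$ and the pointwise inequality carries $-4\,\nabla\cdot J_j$, the spatial-boundary contribution on the right-hand side is $+4\int_{\{y_j=\delta\}}\nabla y_j\cdot J_j$, not $-4$; this is consistent with your subsequent use of the \emph{upper} bound \eqref{J-statement-j}, so the argument is unaffected.
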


\begin{proof}
Let $C, \ep, \ep', \lambda_0, ( y_1, y_2 )$ be chosen as in Lemma \ref{L.gluing}.
Integrating the pointwise estimate \eqref{Carleman00} over the domain $[ 0, T ] \times \{ y_j > \delta \}$ and applying both the fundamental theorem of calculus (in $t$) and the divergence theorem (in $x$) yields
\begin{align}
\label{intC_0} &C \sum_{ j = 1 }^2 \int_{ [ 0, T ] \times \{ y_j > \delta \} } e^{ -2 \la F_j } \big[ \la \te y_j^{-1+2p} | \nab u |^2 + ( \la^3 \te^3 y_j^{-4+6p} + \la \te y_j^{-3+2p} ) u^2 \big] \\
\notag &\quad \leq \sum_{ j = 1 }^2 \int_{ [ 0, T ] \times \{ y_j > \delta \} } e^{-2 \la F_j} | ( \pm \partial_t + \Delta_{ \sigma, y_j } ) u |^2 \\
\notag &\quad\qquad + 4 \sum_{j=1}^2 \int_{ \{ T \} \times \{ y_j > \delta \} } | J_j^t | + 4 \sum_{j=1}^2 \int_{ \{ 0 \} \times \{ y_j > \delta \} } | J_j^t | \\
\notag &\quad\qquad + 4 \sum_{j=1}^2 \int_{ [ 0, T ] \times \{ y_j = \delta \} } ( \nabla y_j \cdot J_j ) \text{.}
\end{align}

By \eqref{J-statement-j}, there exists $C' > 0$ (with the same dependencies as before) with
\begin{align}
\label{intC_1} \int_{ [ 0, T ] \times \{ y_j = \delta \} } ( \nabla y_j \cdot J_j ) &\leq C' \int_{ [ 0, T ] \times \{ y_j = \delta \} } e^{-2 \lambda F_j} \, \lambda \theta y_j^{-1+2p} \, ( D_{ y_j } u )^2 \\
\notag &\qquad + C' \int_{ [ 0, T ] \times \{ y_j = \delta \} } e^{-2 \lambda F_j} \, \lambda^3 \theta^3 y_j^{-3+2p} \, u^2 \\
\notag &\qquad + C' \bigg| \int_{ [0,T] \times \{ y_j = \delta \} } e^{-2 \lambda F_j} \, \partial_t u D_{ y_j } u \bigg| \text{,}
\end{align}
for $j = 1, 2$.
For the remaining boundary integrals for $J^t_j$, note that
\[
\lambda^k \theta^k e^{-\lambda F_j} \leq \lambda^k \theta^k e^{-\la \theta \beta_j }
\]
converges uniformly to $0$ as $t \nearrow T$ and $t \searrow 0$, for any $k \geq 0$.
The above, combined with \eqref{Jt-statement-j} and \eqref{finite_energy}, imply that the terms of \eqref{intC_0} containing $J^t_1$, $J^t_2$ vanish:
\begin{equation}
\label{intC_2} 4 \sum_{j=1}^2 \int_{ \{ T \} \times \{ y_j < \delta \} } | J_j^t | = 4 \sum_{j=1}^2 \int_{ \{ 0 \} \times \{ y_j < \delta \} } | J_j^t | = 0 \text{.}
\end{equation}
Combining \eqref{intC_0}--\eqref{intC_2} and then letting $\delta \searrow 0$ results in \eqref{intC}.
\end{proof}

\begin{remark}
While the final boundary term in \eqref{intC} (involving $\partial_t u$) is expected to vanish in our applications of Theorem \ref{T.preciseC}, it has to be treated especially delicately.
Thus is due to the presence of $\partial_t u$, which counts for two spatial derivatives in the context of parabolic equations, and which makes this the least regular boundary term.
In particular, we will have to take full advantage of the structure of our heat operator in order to ensure that this term is well-defined and finite.
\end{remark}

\section{Boundary Observability} \label{S.obs}

As an application of Theorem \ref{T.preciseC}, we present in this section a boundary observability result for critically singular (backwards) heat equations.
Throughout, we let $\Omega$, $\Ga$, $d_\Ga$, and the constant $d_0$ be as in previous sections.

Before stating our key results, we must first develop the requisite well-posedness theory for our singular heat operators.
For this, we will also have to treat the more general inhomogeneous extension of Problem (O):

\begin{problem}[OI]
Given final data $u_T$ on $\Omega$, and forcing term $F$ on $( 0, T ) \times \Omega$, solve the following final-boundary value problem for $u$,
\begin{align}
\label{heat_ex} ( \partial_t + \Delta_\sigma + X \cdot \nabla + V ) u = F &\quad \text{on $( 0, T ) \times \Omega$,} \\
\notag u ( T ) = u_T &\quad \text{on $\Omega$,} \\
\notag u = 0 &\quad \text{on $( 0, T ) \times \Gamma$,}
\end{align}
where $\sigma \in ( -\frac{3}{4}, 0 )$, and where the lower-order coefficients satisfy $( X, V ) \in \mc{Z}$.
\end{problem}

Our analysis of Problem (OI) is closely connected to the setting studied in \cite{BZuazua} (but only for subcritical $\sigma$).
Since we are dealing with boundary rather than interior observability, here we must deal more carefully with boundary asymptotics.
Moreover, the presence of lower-order terms in \eqref{heat_ex} complicates the analysis.
As a result, we provide abridged proofs of several key results for completeness.

\begin{remark} \label{time_reverse}
We note that all the theory in this section applies to the \emph{forward} heat equation as well, with the final data $u_T$ replaced by initial data $u_0$ at $t = 0$.
Indeed, this can be obtained by applying the time transformation $t \mapsto T - t$.
\end{remark}

For future convenience, we also use Lemma \ref{L.convex} to fix the following:

\begin{setting}[Boundary defining function]
Fix a boundary defining function $y \in C^4 ( \Omega )$, as given in Definition \ref{ep-bdf}.
(The associated constants $\ep$, $\ep'$ are not relevant.)
\end{setting}

The above is mainly for technical simplification, as this allows us to replace $d_\Ga$, which can fail to be differentiable away from $\Ga$, by a smoother quantity.

\begin{remark} \label{wp_y}
Note the equation \eqref{heat_ex} can now be rewritten as
\begin{equation}
\label{heat_ex_y} \partial_t u + y^{-\kappa} \nabla \cdot [ y^{2\kappa} \nabla ( y^{-\kappa} u ) ] + X \cdot \nabla u + V_y u = F \text{,}
\end{equation}
where the modified potential $V_y$ is given by
\begin{equation} \label{Vy}
V_y = V - \ka y^{-1} \Delta y \,\, \phi + \ka ( 1 - \ka ) ( | \nabla y |^2 y^{-2} - d_\Ga^{-2} ) \, \phi \text{.}
\end{equation}
Note in particular that $( X, V_y ) \in \mc{Z}$.
In the upcoming analysis, it will often be more convenient to express $\Delta_\sigma$ in terms of ``$y$-twisted" derivatives, $y^\ka \nabla y^{-\ka}$ and $y^{-\ka} \nabla y^\ka$.
\end{remark}

\subsection{Elliptic and Semigroup Theory}

The first task is to establish the elliptic and semigroup properties for the singular operator $\Delta_\sigma + X \cdot \nabla + V$.

The following Hardy inequality will play a crucial role in our analysis:

\begin{proposition} \label{prop_hardy_main}
The following inequality holds for any $\phi \in H^1_0 ( \Omega )$:
\begin{equation}
\label{hardy_main} \tfrac{1}{4} \int_\Omega d_\Ga^{-2} \phi^2 \leq \int_\Omega | \nabla \phi |^2 \text{.}
\end{equation}
\end{proposition}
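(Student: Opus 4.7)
The plan is to use the classical proof of Hardy's inequality on convex domains, which rests on two key properties of $d_\Ga$: the eikonal identity $|\nab d_\Ga| = 1$ a.e.\ on $\Om$, and the distributional concavity $-\De d_\Ga \geq 0$, which follows from the convexity of $\Ga$ exactly as exploited in the proof of Lemma \ref{L.convex}. By density of $C^\infty_c(\Om)$ in $H^1_0(\Om)$, it suffices to prove \eqref{hardy_main} for $\phi \in C^\infty_c(\Om)$.

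For such $\phi$, the core ingredient is the pointwise bound obtained by specializing Lemma \ref{pointHardy} to $y = d_\Ga$, $q = 0$, $v = \phi$. Since $|\nab d_\Ga|^2 = 1$ where $d_\Ga$ is smooth, differentiating this identity tangentially yields $\nab d_\Ga \cdot \nab^2 d_\Ga \cdot \nab d_\Ga = 0$, and the lemma collapses to
\[
|\nab \phi|^2 \geq (D_{d_\Ga} \phi)^2 \geq \tfrac{1}{4} d_\Ga^{-2} \phi^2 + \nab \cdot \big[\tfrac{1}{2} d_\Ga^{-1} \nab d_\Ga \, \phi^2\big] - \tfrac{1}{2} d_\Ga^{-1} (\De d_\Ga) \, \phi^2,
\]
where the first inequality uses Cauchy--Schwarz together with $|\nab d_\Ga| = 1$. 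Integrating over $\Om$ annihilates the divergence term thanks to the compact support of $\phi$, and the $\De d_\Ga$ contribution acquires the favorable sign by the convexity-induced distributional concavity of $d_\Ga$, yielding \eqref{hardy_main}.

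The one genuine technical obstacle is that $d_\Ga$ is only Lipschitz on $\Om$: it is $C^4$ near $\Ga$ by Setting \ref{ass.d0}, but can be non-differentiable on caustics in the interior, so both the pointwise identity above and the meaning of $\De d_\Ga$ require justification. To handle this rigorously I would regularize as $d_\Ga^\ep := \phi^\ep \ast d_\Ga$, following \eqref{d_moll}. By \eqref{d_moll_ccv}, $d_\Ga^\ep$ is smooth and (still) concave on compact subsets of $\Om$ for $\ep$ sufficiently small, and one has $|\nab d_\Ga^\ep| \leq 1$ by the same convolution argument. Running the computation above with $d_\Ga^\ep$ in place of $d_\Ga$ produces an honest pointwise identity; integration yields the inequality
\[
\int_\Om |\nab \phi|^2 \geq \tfrac{1}{4} \int_\Om (d_\Ga^\ep)^{-2} \, \phi^2 - \tfrac{1}{2} \int_\Om (d_\Ga^\ep)^{-1} (\De d_\Ga^\ep) \, \phi^2,
\]
where the second term is nonnegative by the pointwise concavity of $d_\Ga^\ep$. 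Passing to the limit $\ep \searrow 0$ by dominated convergence---using the uniform convergence $d_\Ga^\ep \to d_\Ga$ on the support of $\phi$ and a uniform lower bound on $d_\Ga$ there---delivers \eqref{hardy_main}. The sharp constant $\tfrac{1}{4}$ emerges automatically from the optimal choice $b = -\tfrac{1}{2}$ in Lemma \ref{pointHardy}.
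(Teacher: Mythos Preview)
The paper does not prove this proposition; it simply cites the references \cite{BM, MMP} in the remark that follows. Your proposal therefore supplies a proof where the paper offers none, and the approach you take---the pointwise Hardy identity of Lemma~\ref{pointHardy} combined with the distributional concavity of $d_\Ga$---is indeed the classical argument underlying those references.

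The argument is essentially correct, but one step in the regularization is written a little loosely. When you replace $d_\Ga$ by $d_\Ga^\ep$ in Lemma~\ref{pointHardy}, the identity $|\nab d_\Ga^\ep| = 1$ no longer holds (you only have $|\nab d_\Ga^\ep| \leq 1$), so the pointwise inequality should retain the factor $|\nab d_\Ga^\ep|^4$ on the $(d_\Ga^\ep)^{-2}\phi^2$ term, and the Hessian contribution $-(d_\Ga^\ep)^{-1}(\nab d_\Ga^\ep \cdot \nab^2 d_\Ga^\ep \cdot \nab d_\Ga^\ep)\phi^2$ does not vanish. Neither of these is fatal: the Hessian term has the favorable sign by the concavity \eqref{d_moll_ccv}, and since $\nab d_\Ga^\ep \to \nab d_\Ga$ a.e.\ with $|\nab d_\Ga| = 1$ a.e., the factor $|\nab d_\Ga^\ep|^4$ converges to $1$ a.e.\ on $\operatorname{supp}\phi$, so dominated convergence still delivers \eqref{hardy_main} in the limit. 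You should just display the intermediate inequality with these factors present before passing to the limit.
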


\begin{remark}
See \cite{BM, MMP} for details on Proposition \ref{prop_hardy_main}.
We mention that the explicit constant $\frac{1}{4}$ in \eqref{hardy_main} is only valid when $\Ga$ is convex; for more general $\Om$ and $\Ga$, one still has \eqref{hardy_main}, but with $\frac{1}{4}$ replaced by a possibly smaller positive constant.
\end{remark}

\begin{corollary} \label{prop_hardy_norm}
The following holds for any $\sigma \in ( -\frac{3}{4}, 0 )$ and $\phi \in H^1_0 ( \Omega )$,
\begin{equation}
\label{hardy_norm_equiv} \| \phi \|_{ H^1 ( \Omega ) } \simeq \| y^\kappa \nabla ( y^{-\kappa} \phi ) \|_{ L^2 ( \Omega ) } + \| \phi \|_{ L^2 ( \Omega ) } \text{,}
\end{equation}
where the constants depend on $\Omega$ and $\sigma$.
\end{corollary}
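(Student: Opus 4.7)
The plan is to establish a pointwise-plus-integration-by-parts identity expressing $\|y^\kappa\nabla(y^{-\kappa}\phi)\|_{L^2}^2$ in terms of $\|\nabla\phi\|_{L^2}^2$ and a Hardy-type bulk term, and then use the strict negativity of $\sigma$ together with the Hardy inequality from Proposition \ref{prop_hardy_main} to close the equivalence. Since $C^\infty_c(\Omega)$ is dense in $H^1_0(\Omega)$, it suffices to prove \eqref{hardy_norm_equiv} for $\phi \in C^\infty_c(\Omega)$ and then pass to the limit.

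First, I would expand, using $D_y$ notation from \eqref{Dyv},
\[
y^\kappa\nabla(y^{-\kappa}\phi) = \nabla \phi - \kappa y^{-1}(\nabla y)\phi \text{,}
\]
square, and integrate. The cross term $-2\kappa\int_\Omega y^{-1}(\nabla y)\cdot\nabla\phi\,\phi = -\kappa\int_\Omega y^{-1}(\nabla y)\cdot\nabla(\phi^2)$ is handled by integration by parts, using $\nabla\cdot(y^{-1}\nabla y) = y^{-1}\Delta y - y^{-2}|\nabla y|^2$; for $\phi\in C^\infty_c(\Omega)$ no boundary term appears. Using the defining relation $\sigma = \kappa(1-\kappa)$, this yields the identity
\[
\int_\Omega y^{2\kappa}|\nabla(y^{-\kappa}\phi)|^2 = \int_\Omega|\nabla\phi|^2 - \sigma\int_\Omega y^{-2}|\nabla y|^2\phi^2 + \kappa\int_\Omega y^{-1}\Delta y\,\phi^2 \text{.}
\]
Now $|\nabla y|^2 = 1$ near $\Ga$ (Definition \ref{ep-bdf}) and $y = d_\Ga$ there, so the second integrand agrees with $-\sigma d_\Ga^{-2}\phi^2$ up to globally bounded contributions, and the third integrand is pointwise dominated by $C d_\Ga^{-1}\phi^2$ near $\Ga$ plus bounded terms.

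To obtain the upper bound in \eqref{hardy_norm_equiv}, I would use $\sigma<0$ so that the $-\sigma$-term is positive and controlled by $\int d_\Ga^{-2}\phi^2 \leq 4\int|\nabla\phi|^2$ from Proposition \ref{prop_hardy_main}; the $\kappa y^{-1}\Delta y$-term is controlled by interpolation $\int d_\Ga^{-1}\phi^2 \leq \|d_\Ga^{-1}\phi\|_{L^2}\|\phi\|_{L^2} \lesssim \|\nabla\phi\|_{L^2}\|\phi\|_{L^2}$ followed by Young. For the lower bound, the key observation is that since $\sigma\in(-\tfrac34,0)$, the term $-\sigma\int y^{-2}|\nabla y|^2\phi^2$ on the right-hand side of the identity is \emph{non-negative}, so it can be dropped, giving
\[
\int_\Omega|\nabla\phi|^2 \leq \int_\Omega y^{2\kappa}|\nabla(y^{-\kappa}\phi)|^2 - \kappa\int_\Omega y^{-1}\Delta y\,\phi^2 \text{,}
\]
and the last integral is absorbed into $\tfrac12\|\nabla\phi\|_{L^2}^2 + C\|\phi\|_{L^2}^2$ exactly as above.

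The main (though mild) obstacle is to justify the integration by parts when we extend from $C_c^\infty(\Omega)$ to $\phi \in H^1_0(\Omega)$, since $y^{-1}\nabla y$ is singular of order $d_\Ga^{-1}$ at $\Ga$; this is handled by observing that Proposition \ref{prop_hardy_main} forces $d_\Ga^{-1}\phi \in L^2(\Omega)$ for $\phi\in H^1_0$, so all terms appearing in the identity are finite and continuous in $\phi$ under the $H^1$-norm, which allows the density extension. The other small issue is that $y\neq d_\Ga$ away from $\Ga$, but there $y$ is bounded below away from zero, so the discrepancy between $y^{-2}|\nabla y|^2$ and $d_\Ga^{-2}$ contributes only an $L^\infty$-weighted $\|\phi\|_{L^2}^2$ term, which is harmless on both sides of \eqref{hardy_norm_equiv}.
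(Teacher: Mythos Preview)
Your argument is correct and follows essentially the same route as the paper: both rely on the relation $y^\kappa\nabla(y^{-\kappa}\phi)=\nabla\phi-\kappa y^{-1}(\nabla y)\phi$, the sign condition $\sigma<0$, Hardy's inequality \eqref{hardy_main}, and absorption of the lower-order $y^{-1}\Delta y$ term via Cauchy--Schwarz and Young. The only cosmetic difference is that the paper obtains the upper bound more directly by the triangle inequality (no integration by parts), and for the lower bound it starts from $\int|\nabla\phi|^2\leq\int\phi(-\Delta\phi-\sigma y^{-2}\phi)$ and then passes to the twisted Laplacian $y^{-\kappa}\nabla\cdot[y^{2\kappa}\nabla(y^{-\kappa}\cdot)]$, whereas you first derive the explicit squared-norm identity and manipulate it in both directions.
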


\begin{proof}
Half of \eqref{hardy_norm_equiv} is an immediate consequence of \eqref{hardy_main}:
\[
\| y^\ka \nabla ( y^{-\ka} \phi ) \|_{ L^2 ( \Omega ) } \lesssim \| \nabla \phi \|_{ L^2 ( \Omega ) } + \| y^{-1} \phi \|_{ L^2 ( \Omega ) } \lesssim \| \nabla \phi \|_{ L^2 ( \Omega ) } \text{.}
\]
For the reverse inequality, we integrate by parts to obtain, for $\phi \in C^\infty_0 ( \Om )$,
\begin{align*}
\int_\Omega | \nabla \phi |^2 &\leq \int_\Om \phi ( - \Delta \phi - \sigma y^{-2} \, \phi ) \\
&\leq - \int_\Om \phi \, ( y^{-\kappa} \nabla \cdot [ y^{2\kappa} \nabla ( y^{-\kappa} \phi ) ] ) + [ \| \nabla \phi \|_{ L^2 ( \Omega ) } + \| \phi \|_{ L^2 ( \Omega ) } ] \| \phi \|_{ L^2 ( \Omega ) } \\
&\leq \int_\Om | y^\ka \nabla ( y^{-\ka} \phi ) |^2 + [ \| \nabla \phi \|_{ L^2 ( \Omega ) } + \| \phi \|_{ L^2 ( \Omega ) } ] \| \phi \|_{ L^2 ( \Omega ) } \text{.}
\end{align*}
The result now follows from the above via approximation.
\end{proof}

\begin{remark}
One can in fact show, using \eqref{hardy_main}, that \eqref{hardy_norm_equiv} holds for all $\sigma < \frac{1}{4}$.
\end{remark}

Next, we show that $\De_\sigma + X \cdot \nabla + V$ generates an appropriate semigroup, from which one can derive well-posedness properties for Problem (OI):

\begin{proposition} \label{elliptic}
Fix $\sigma \in ( -\frac{3}{4}, 0 )$ and $( X, V ) \in \mc{Z}$, and consider the operator
\begin{equation}
\label{A_sigma} A_\sigma := \Delta_\sigma + X \cdot \nabla + V \text{,}
\end{equation}
which we view as an unbounded operator on $L^2 ( \Omega )$,
\[
A_\sigma: \mf{D} ( A_\sigma ) \rightarrow L^2 ( \Omega ) \text{,} \qquad \mf{D} ( A_\sigma ) := \{ \phi \in H^1_0 ( \Omega ) \mid A_\sigma \phi \in L^2 ( \Omega ) \} \text{.}
\]
Then, there exists $\gamma \geq 0$ such that:
\begin{itemize}
\item $\lambda I - A_\sigma$ is invertible for any $\lambda > \gamma$, and
\begin{equation}
\label{resolvent} \| ( \lambda I - A_\sigma )^{-1} f \|_{ L^2 ( \Omega ) } \leq ( \lambda - \gamma )^{-1} \| f \|_{ L^2 ( \Omega ) } \text{,} \qquad f \in L^2 ( \Omega ) \text{.}
\end{equation}

\item $-A_\sigma$ generates a $\gamma$-contractive semigroup $t \mapsto e^{-t A_\sigma}$ on $L^2 ( \Omega )$, that is,
\begin{equation}
\label{semigroup} \| e^{-t A_\sigma } \phi \|_{ L^2 ( \Omega ) } \leq e^{ \gamma t } \| \phi \|_{ L^2 ( \Omega ) } \text{,} \qquad t > 0 \text{,} \quad \phi \in L^2 ( \Omega ) \text{.}
\end{equation}
\end{itemize}
Furthermore, if $\phi \in \mf{D} ( A_\sigma )$, then $\phi \in H^2_{\text{loc}} ( \Omega )$, and
\begin{equation}
\label{elliptic_H2} \| y^{-\kappa} \nabla [ y^{2\kappa} \nabla ( y^{-\kappa} \phi ) ] \|_{ L^2 ( \Omega ) } + \| \nabla \phi \|_{ L^2 ( \Omega ) } \lesssim \| A_\sigma \phi \|_{ L^2 ( \Omega ) } + \| \phi \|_{ L^2 ( \Omega ) } \text{,}
\end{equation}
with the constant of the inequality depending only on $\Omega$, $\sigma$, $X$, $V$.
\end{proposition}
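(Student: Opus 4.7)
The plan is to invoke Hille--Yosida after setting up the energy via the twisted formulation from Remark \ref{wp_y} and establishing coercivity through Proposition \ref{prop_hardy_main}. Rewriting
\begin{equation*}
A_\sigma \phi = y^{-\kappa}\nabla\cdot[y^{2\kappa}\nabla(y^{-\kappa}\phi)] + X\cdot\nabla\phi + V_y\phi, \qquad V_y \in L^\infty(\Omega),
\end{equation*}
I would introduce on $H^1_0(\Omega)$ the bilinear form
\begin{equation*}
a(\phi,\psi) := \int_\Omega y^{2\kappa}\nabla(y^{-\kappa}\phi)\cdot\nabla(y^{-\kappa}\psi) - \int_\Omega (X\cdot\nabla\phi)\psi - \int_\Omega V_y\phi\psi.
\end{equation*}
Continuity of $a$ on $H^1_0\times H^1_0$ is immediate from Corollary \ref{prop_hardy_norm}. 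For coercivity, one chooses $\gamma > 0$ large enough to absorb $X\cdot\nabla\phi$ and $V_y\phi^2$ by Young's inequality, so that $a(\phi,\phi) + \gamma\|\phi\|_{L^2}^2 \gtrsim \|\phi\|_{H^1}^2$ on $H^1_0(\Omega)$; in the present range $\sigma < 0$ the singular term contributes non-negatively to $a$, so coercivity is comparatively easy.

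With coercivity in hand, Lax--Milgram applied to $a_\lambda := a + \lambda\,\langle\cdot,\cdot\rangle_{L^2}$ for $\lambda > \gamma$ yields, for any $f\in L^2(\Omega)$, a unique $\phi\in H^1_0(\Omega)$ with $a_\lambda(\phi,\psi) = \langle f,\psi\rangle_{L^2}$ for all $\psi\in H^1_0$. Interpreting this distributionally gives $(\lambda I - A_\sigma)\phi = f$ in $\mathcal{D}'(\Omega)$, so in particular $A_\sigma\phi\in L^2$ and $\phi\in\mf{D}(A_\sigma)$. Testing $a_\lambda(\phi,\phi) = \langle f,\phi\rangle$ and using coercivity produces the resolvent bound \eqref{resolvent}. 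Density of $\mf{D}(A_\sigma)$ in $L^2$ is immediate from $C^\infty_c(\Omega)\subset\mf{D}(A_\sigma)$, while closedness follows from the resolvent identity. The Hille--Yosida theorem then delivers the $\gamma$-contractive semigroup \eqref{semigroup}.

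For the regularity statement, $H^2_{\mathrm{loc}}(\Omega)$-regularity is classical, since $\sigma y^{-2}$ is smooth on any subdomain compactly contained in $\Omega$. The first summand of \eqref{elliptic_H2} is obtained by simply rearranging
\begin{equation*}
y^{-\kappa}\nabla\cdot[y^{2\kappa}\nabla(y^{-\kappa}\phi)] = A_\sigma\phi - X\cdot\nabla\phi - V_y\phi,
\end{equation*}
and taking $L^2$-norms. For the $\|\nabla\phi\|_{L^2}$ bound, I would pair $A_\sigma\phi = f$ with $\phi$ and integrate by parts to produce
\begin{equation*}
\int_\Omega y^{2\kappa}|\nabla(y^{-\kappa}\phi)|^2 = -\int_\Omega \phi f + \int_\Omega(X\cdot\nabla\phi)\phi + \int_\Omega V_y\phi^2,
\end{equation*}
bound the right-hand side by Cauchy--Schwarz with a small-$\varepsilon$ Young's inequality absorbing the first-order term, and then convert the twisted seminorm on the left back to $\|\nabla\phi\|_{L^2}^2$ via Corollary \ref{prop_hardy_norm}.

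The principal technical point is coercivity, whose viability depends on the sharp Hardy constant $\tfrac14$ from Proposition \ref{prop_hardy_main} afforded by the convexity of $\Gamma$; in the range $\sigma\in(-\tfrac14,0)$ the singular potential poses no obstruction, and the subtlest issue reduces to absorbing $X\cdot\nabla\phi$ (and the slightly-singular potential $V_y$ inherited from $V\in\mathcal{Z}$) into the $\gamma$-shifted $L^2$-term.
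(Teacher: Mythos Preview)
Your treatment of the resolvent bound and the semigroup generation matches the paper's approach essentially line for line, and is fine.

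The gap is in the regularity estimate \eqref{elliptic_H2}. The quantity on the left is $\|y^{-\kappa}\nabla[y^{2\kappa}\nabla(y^{-\kappa}\phi)]\|_{L^2}$, i.e.\ the $L^2$-norm of the full (matrix-valued) twisted Hessian; your rearrangement only recovers the scalar $y^{-\kappa}\nabla\cdot[y^{2\kappa}\nabla(y^{-\kappa}\phi)]$, which is its trace. Controlling the divergence by $\|A_\sigma\phi\|_{L^2}+\|\nabla\phi\|_{L^2}$ is trivially true but does not give the full second-order bound required here (and used downstream, e.g.\ in Proposition~\ref{neumann_trace}). To get the off-diagonal and pure-normal components one needs an additional elliptic argument near $\Gamma$: the paper splits into tangential and normal parts, multiplies $A_\sigma\phi$ by the tangential Laplacian $\slashed{\Delta}\phi$ and integrates by parts to control $y^{-\kappa}\slashed{\nabla}[y^{2\kappa}\nabla(y^{-\kappa}\phi)]$, then reads off the purely normal piece $y^{-\kappa}D_y[y^{2\kappa}D_y(y^{-\kappa}\phi)]$ from the equation itself. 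Without some such step your proof of \eqref{elliptic_H2} is incomplete.

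A minor point: closedness ``from the resolvent identity'' is legitimate (a bounded inverse forces closedness), though the paper instead derives it from the very estimate \eqref{elliptic_H2} you have not yet established; either route works once the gap above is fixed.
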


\begin{proof}[Proof sketch.]
First, note by the computations in Remark \ref{wp_y}, we have
\begin{equation}
\label{A_sigma_ex} A_\sigma = y^{-\kappa} \nabla \cdot ( y^{2\kappa} \nabla y^{-\kappa} ) + X \cdot \nabla + V_y \text{.}
\end{equation}

We begin with the resolvent estimate \eqref{resolvent}.
Note $-A_\sigma$ can be associated with the bilinear form $\mc{B}_\sigma: H^1_0 ( \Omega ) \times H^1_0 ( \Omega ) \rightarrow \R$, given by
\begin{equation}
\label{elliptic_bilinear} \mc{B}_\sigma ( \phi, \psi ) := \int_\Omega [ y^\ka \nabla ( y^{-\ka} \phi ) \cdot y^\ka \nabla ( y^{-\ka} \psi ) - ( X \cdot \nabla \phi ) \, \psi - V_y \phi \psi ] \text{.}
\end{equation}
By Definition \ref{lower_order_def} and \eqref{hardy_norm_equiv}, there exist $c > 0$ and $\gamma \geq 0$ such that
\begin{equation}
\label{elliptic_energy} \mc{B}_\sigma ( \phi, \phi ) \geq c \| \phi \|_{ H^1 ( \Omega ) }^2 - \gamma \| \phi \|_{ L^2 ( \Omega ) }^2 \text{.}
\end{equation}
In particular, when $\lambda > \gamma$, the Lax-Milgram theorem and \eqref{elliptic_energy} imply that for any $f \in L^2 ( \Omega )$, there exists a unique $\phi \in H^1_0 ( \Omega )$ such that 
\begin{equation}
\label{elliptic_weak} \lambda \int_\Omega \phi \psi + \mc{B}_\sigma ( \phi, \psi ) = \int_\Omega f \psi \text{,} \qquad \psi \in H^1_0 ( \Omega ) \text{.}
\end{equation}
Applying an integration by parts to \eqref{elliptic_weak}, we see that $f = ( \lambda I - A_\sigma ) \phi$ (at least in a weak sense).
Moreover, setting $\psi := \phi$ in \eqref{elliptic_weak} and recalling \eqref{elliptic_energy} yields
\[
( \lambda - \gamma ) \| \phi \|_{ L^2 ( \Omega ) }^2 \leq \| f \|_{ L^2 ( \Omega ) } \| \phi \|_{ L^2 ( \Omega ) } \text{,}
\]
from which the resolvent inequality \eqref{resolvent} immediately follows.

The next step is to obtain the $H^2$-estimate \eqref{elliptic_H2}.
The $H^1$-bound
\begin{equation}
\label{elliptic_H1} \| \nabla \phi \|_{ L^2 ( \Omega ) } \lesssim \| A_\sigma \phi \|_{ L^2 ( \Omega ) } + \| \phi \|_{ L^2 ( \Omega ) }
\end{equation}
is a consequence of \eqref{elliptic_bilinear}, \eqref{elliptic_energy}, and an integration by parts.
Moreover, interior regularity for $A_\sigma$ follows from standard elliptic theory (see \cite[Section 6.3]{Evans}), since all the coefficients of $A_\sigma$ are bounded on any compact subset of $\Omega$.
In particular, $\phi \in \mf{D} ( A_\sigma )$ implies $\phi \in H^2_{loc} ( \Omega )$, and hence it suffices to bound $y^{-\kappa} \nabla [ y^{2\kappa} \nabla ( y^{-\kappa} \phi ) ]$ in \eqref{elliptic_H2} while assuming that $\phi$ is supported sufficiently near $\Gamma$.

Let $\snabla$ and $\sDe$ denote the gradient and Laplacian on the level sets of $y$, respectively.
The informal idea is then to integrate by parts the identity
\begin{equation}
\label{elliptic_H2_pre} \int_\Omega A_\sigma \phi \sDe \phi = \int_\Omega \{ y^{-\kappa} \nabla \cdot [ y^{2\kappa} \nabla ( y^{-\kappa} \phi ) ] + X \cdot \nabla \phi + V_y \phi \} \, \sDe \phi \text{.}
\end{equation}
In particular, estimating lower-order terms using Definition \ref{lower_order_def} and \eqref{hardy_main}, and noting that $\snabla \phi$ and $\snabla^2 \phi$ have zero trace on $\Gamma$, we obtain the estimate
\begin{align*}
&\| y^{-\kappa} \snabla [ y^{2\kappa} \nabla ( y^{-\kappa} \phi ) ] \|_{ L^2 ( \Omega ) }^2 \\
&\quad \lesssim \| A_\sigma \phi \|_{ L^2 ( \Omega ) } \| \sDe \phi \|_{ L^2 ( \Omega ) } + \| \phi \|_{ L^2 ( \Omega ) } \| y^{-\kappa} \snabla [ y^{2\kappa} \nabla ( y^{-\kappa} \phi ) ] \|_{ L^2 ( \Omega ) } \\
&\quad\qquad + \| \phi \|_{ H^1 ( \Omega ) }^2 + \| y^{-1} \phi \|_{ L^2 ( \Omega ) } \| \sDe \phi \|_{ L^2 ( \Omega ) } \\
&\quad \lesssim [ \| A_\sigma \phi \|_{ L^2 ( \Omega ) } + \| \phi \|_{ H^1 ( \Omega ) } ] \| y^{-\kappa} \snabla [ y^{2\kappa} \nabla ( y^{-\kappa} \phi ) ] \|_{ L^2 ( \Omega ) } + \| \phi \|_{ H^1 ( \Omega ) }^2 \text{.}
\end{align*}
(Formally, there is not enough regularity to carry out the above manipulations, and one must approximate, e.g., by replacing $\sDe \phi$ in \eqref{elliptic_H2_pre} with appropriate difference quotients; see \cite[Section 6.3]{Evans}.)
The above then implies
\begin{equation}
\label{elliptic_H2_1} \| y^{-\kappa} \snabla [ y^{2\kappa} \nabla ( y^{-\kappa} \phi ) ] \|_{ L^2 ( \Omega ) }^2 \lesssim \| A_\sigma \phi \|_{ L^2 ( \Omega ) }^2 + \| \phi \|_{ H^1 ( \Omega ) }^2 \text{.}
\end{equation}

In addition, for normal derivatives, we bound, using \eqref{hardy_main}, \eqref{A_sigma}, and \eqref{elliptic_H2_1},
\begin{align}
\label{elliptic_H2_2} \| y^{-\kappa} D_y [ y^{2\kappa} D_y ( y^{-\kappa} \phi ) ] \|_{ L^2 ( \Omega ) } &\lesssim \| A_\sigma \phi \|_{ L^2 ( \Omega ) } + \| \sDe \phi \|_{ L^2 ( \Omega ) } + \| \phi \|_{ H^1 ( \Omega ) } \\
\notag &\lesssim \| A_\sigma \phi \|_{ L^2 ( \Omega ) } + \| \phi \|_{ H^1 ( \Omega ) } \text{.}
\end{align}
The desired estimate \eqref{elliptic_H2} now follows by combining \eqref{elliptic_H1}, \eqref{elliptic_H2_1}, and \eqref{elliptic_H2_2}.

It remains to prove the semigroup property for $-A_\sigma$.
By the Hille--Yosida theorem (see, e.g., the discussions in \cite[Section 7.4]{Evans}), this is a consequence of \eqref{resolvent}, provided we show that $A_\sigma$ is closed and densely defined.
The latter property holds, since $\mf{D} ( A_\sigma )$ contains $C^\infty_0 ( \Omega )$ and hence is dense in $L^2 ( \Omega )$.

Finally, to see that $A_\sigma$ is closed, consider a sequence $( \phi_k )$ in $\mf{D} ( A_\sigma )$ such that
\begin{equation}
\label{closed_cauchy} \lim_{ k \rightarrow \infty } \phi_k = \phi \text{,} \qquad \lim_{ k \rightarrow \infty } A_\sigma \phi_k = \psi \text{,}
\end{equation}
with both limits in $L^2 ( \Omega )$.
Then, all the $\phi_k$'s lie in $H^2_{loc} ( \Omega )$, and \eqref{elliptic_H2} yields that
\begin{align*}
&\| y^{-\kappa} \nabla [ y^{2\kappa} \nabla ( y^{-\kappa} ( \phi_k - \phi_l ) ) ] \|_{ L^2 ( \Omega ) } + \| \nabla ( \phi_k - \phi_l ) \|_{ L^2 ( \Omega ) } \\
&\quad \lesssim \| A_\sigma \phi_k - A_\sigma \phi_l \|_{ L^2 ( \Omega ) } + \| \phi_k - \phi_l \|_{ L^2 ( \Omega ) } \text{,}
\end{align*}
for any $k, l \in \N$.
Since the right-hand side of the above goes to zero as $k, l \rightarrow \infty$ by \eqref{closed_cauchy}, then $( \phi_k )$ is a Cauchy sequence in a weighted $H^2$-space, and
\[
\lim_{ k \rightarrow \infty } \nabla \phi_k = \nabla \phi \text{,} \qquad \lim_{ k \rightarrow \infty } y^{-\kappa} \nabla [ y^{2\kappa} \nabla ( y^{-\kappa} \phi_k ) ] = y^{-\kappa} \nabla [ y^{2\kappa} \nabla ( y^{-\kappa} \phi ) ] \text{.}
\]
The above then implies $\psi = A_\sigma \phi$, and hence $A_\sigma$ is indeed closed.
\end{proof}

\begin{remark}
Hardy's inequality ensures the usual Sobolev space $H^1_0 ( \Omega )$ suffices for working at the level of first derivatives.
However, the situation changes for second derivatives, as the left-hand side of \eqref{elliptic_H2} is no longer comparable to the $H^2$-norm.
\end{remark}

\subsection{Strict Solutions}

Following the discussions in \cite{Biccari, BZuazua}, we now define two notions of solutions of \eqref{heat_ex}, and we state the corresponding well-posedness results:

\begin{definition} \label{mild_soln}
Given $u_T \in L^2 ( \Omega )$ and $F \in L^2 ( (0, T) \times \Omega )$, we call
\[
u \in C^0 ( [ 0, T ]; L^2 ( \Omega ) ) \cap L^2 ( ( 0, T ); H^1_0 ( \Omega ) )
\]
a \emph{mild solution} of Problem (OI) iff the following holds:
\begin{equation}
\label{mild_duhamel} u (t) = e^{(T-t) A_\sigma} u_T - \int_t^T e^{(s-t) A_\sigma} F (s) \, ds \text{,} \qquad t \in [ 0, T ] \text{.}
\end{equation}
\end{definition}

\begin{proposition} \label{wp_mild}
Suppose $u_T \in L^2 ( \Omega )$ and $F \in L^2 ( (0, T) \times \Omega )$.
Then, there is a unique mild solution $u$ to Problem (OI).
Furthermore, $u$ satisfies the estimate
\begin{align}
\label{energy_mild} &\| u \|_{ L^\infty ( [0, T]; L^2 ( \Omega ) ) }^2 + \| y^\ka \nabla ( y^{-\ka} u ) \|_{ L^2 ( ( 0, T ) \times \Omega ) }^2 \\
\notag &\quad \lesssim \| u_T \|_{ L^2 ( \Omega ) }^2 + \| F \|_{ L^2 ( ( 0, T ) \times \Omega ) }^2 \text{,}
\end{align}
with the constant of the inequality depending only on $\Omega$, $\sigma$, $X$, $V$.
\end{proposition}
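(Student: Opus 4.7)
The plan is to deduce existence and uniqueness of the mild solution from the abstract semigroup framework of Proposition \ref{elliptic}, and then to establish the energy estimate \eqref{energy_mild} by an approximation/Gr\"onwall argument built on the coercivity bound \eqref{elliptic_energy}.

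For existence and uniqueness, I would first reduce to a forward-in-time problem by setting $\tilde u (s) := u ( T - s )$ and $\tilde F ( s ) := -F ( T - s )$, so that \eqref{heat_ex} becomes the Cauchy problem $\partial_s \tilde u + ( - A_\sigma ) \tilde u = \tilde F$, with $\tilde u ( 0 ) = u_T$, for the generator $-A_\sigma$ of the semigroup $e^{-sA_\sigma}$ given by Proposition \ref{elliptic}. Standard semigroup theory (see \cite{Evans}, Section 7.4) then yields that the Duhamel formula \eqref{mild_duhamel} defines the unique $C^0 ( [0,T]; L^2 ( \Omega ) )$ mild solution, and the contraction estimate \eqref{semigroup} together with Bochner-integration gives $\| u \|_{ L^\infty ( [0,T]; L^2 ( \Omega ) ) } \lesssim \| u_T \|_{ L^2 } + \| F \|_{ L^2 ( ( 0, T ) \times \Omega ) }$.

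The main step is obtaining the $H^1$-part of \eqref{energy_mild} and establishing the regularity $u \in L^2 ( ( 0, T ); H^1_0 ( \Omega ) )$. To do this, I would first approximate the data by sequences $u_T^k \in \mf{D} ( A_\sigma )$ and $F^k \in C^1 ( [0,T]; L^2 ( \Omega ) )$ converging to $u_T$ and $F$ in $L^2$; for these, classical semigroup theory provides strict (classical) solutions $u^k \in C^1 ( [0,T]; L^2 ( \Omega ) ) \cap C^0 ( [0,T]; \mf{D} ( A_\sigma ) )$ on which one can perform integrations by parts rigorously. For each such $u^k$, pair \eqref{heat_ex} with $u^k$ in $L^2$ and use the identity $\langle A_\sigma u^k, u^k \rangle_{ L^2 } = - \mc{B}_\sigma ( u^k, u^k )$, which follows from integrating by parts the $y$-twisted form of $A_\sigma$ in Remark \ref{wp_y}; the boundary contributions vanish since $u^k \in H^1_0 ( \Omega )$. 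This yields
\begin{equation*}
- \tfrac{1}{2} \tfrac{d}{dt} \| u^k ( t ) \|_{ L^2 }^2 + \mc{B}_\sigma ( u^k, u^k ) = \int_\Omega F^k \, u^k \text{.}
\end{equation*}
Invoking the coercivity \eqref{elliptic_energy}, Cauchy--Schwarz on the right-hand side, and integrating from $t$ to $T$ gives
\begin{equation*}
\tfrac{1}{2} \| u^k ( t ) \|_{ L^2 }^2 + c \int_t^T \| u^k \|_{ H^1 }^2 \, ds \leq \tfrac{1}{2} \| u_T^k \|_{ L^2 }^2 + C \int_t^T \| u^k \|_{ L^2 }^2 \, ds + C \| F^k \|_{ L^2 ( ( 0, T ) \times \Omega ) }^2 \text{.}
\end{equation*}
A backwards Gr\"onwall argument applied to $\| u^k ( t ) \|_{ L^2 }^2$ then absorbs the middle term and yields the bound \eqref{energy_mild} for $u^k$, after converting $\| u^k \|_{ H^1 }$ to $\| y^\kappa \nab ( y^{-\kappa} u^k ) \|_{ L^2 }$ via Corollary \ref{prop_hardy_norm}.

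The final step is to pass to the limit $k \to \infty$. Linearity and the $L^\infty L^2$-bound already established for mild solutions imply $u^k \to u$ in $C^0 ( [ 0, T ]; L^2 ( \Omega ) )$; the uniform $H^1$-bound from the estimate above then gives weak convergence of $y^\kappa \nab ( y^{-\kappa} u^k )$ in $L^2 ( ( 0, T ) \times \Omega )$, and the estimate \eqref{energy_mild} descends to $u$ by weak lower semicontinuity. The principal subtlety here is ensuring the boundary-term cancellation when integrating the twisted form of $A_\sigma$ by parts against $u^k$, since the conjugated operator has coefficients that blow up at $\Ga$; this is where I expect most of the technical work, and it hinges on the density of strict solutions and the $H^1_0$ vanishing built into the domain of $A_\sigma$.
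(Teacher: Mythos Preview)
Your proposal is correct and follows essentially the same approach as the paper: both derive existence and uniqueness directly from the semigroup framework and the Duhamel formula, then obtain the energy estimate by testing the equation against the solution for regularized data, integrating by parts via the $y$-twisted form of $A_\sigma$, and applying a Gr\"onwall argument before passing to the limit. The only cosmetic difference is that the paper expands the integration by parts explicitly (obtaining the term $(\nabla\cdot X - 2V_y)u^2$) rather than packaging it through the bilinear form $\mc{B}_\sigma$ and the coercivity bound \eqref{elliptic_energy} as you do, and the paper only approximates $u_T$ rather than also $F$; neither affects the substance of the argument.
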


\begin{proof}[Proof sketch.]
Both existence and uniqueness are immediate from \eqref{mild_duhamel}.
For \eqref{energy_mild}, we only consider when $u_T \in \mf{D} ( A_\sigma )$ (so that $u (t) \in \mf{D} ( A_\sigma )$ and $\partial_t u (t) \in L^2 ( \Omega )$ for every $t \in [ 0, T )$); the general case then follows by approximation.

By the fundamental theorem of calculus, \eqref{heat_ex_y}, and integrations by parts,
\begin{align*}
&\| u (T) \|_{ L^2 ( \Omega ) }^2 - \| u (t) \|_{ L^2 ( \Omega ) }^2 \\
&\quad = 2 \int_t^T \int_\Omega u \{ F - y^{-\ka} \nabla \cdot [ y^{2\ka} \nabla ( y^{-\ka} u ) ] - X \cdot \nabla u - V_y u \} \big|_{t=s} ds \\
&\quad = 2 \int_t^T \int_\Omega F u |_{t=s} ds + 2 \int_t^T \int_\Omega | y^\ka \nabla ( y^{-\ka} u ) |^2 \big|_{ t = s } ds \\
&\quad\qquad + \int_t^T \int_\Omega ( \nabla \cdot X - 2 V_y ) u^2 \big|_{t=s} ds \text{,}
\end{align*}
for any $t \in [ 0, T )$.
Rearranging and recalling Definition \ref{lower_order_def}, we obtain that
\begin{align*}
&\| u (t) \|_{ L^2 ( \Omega ) }^2 + 2 \int_t^T \int_\Omega | y^\ka \nabla ( y^{-\ka} u (s) ) |^2 \, ds \\
&\quad \leq \| u_T \|_{ L^2 ( \Omega ) }^2 + \int_t^T [ \| F (s) \|_{ L^2 ( \Omega ) } + \| y^{-1} u (s) \|_{ L^2 ( \Omega ) } ] \| u (s) \|_{ L^2 ( \Omega ) } ds \text{.}
\end{align*}
Applying \eqref{hardy_main}, \eqref{hardy_norm_equiv}, and absorbing then yields,
\begin{align*}
\| u (t) \|_{ L^2 ( \Omega ) }^2 + \| u \|_{ L^2 ( ( 0, T ); H^1 ( \Omega ) ) }^2 &\lesssim \| u_T \|_{ L^2 ( \Omega ) }^2 + \| F \|_{ L^2 ( (0, T) \times \Omega ) }^2 \\
&\qquad + \int_t^T \| u (s) \|_{ L^2 ( \Omega ) }^2 ds \text{,}
\end{align*}
and the result follows from Gronwall's inequality.
\end{proof}

\begin{definition} \label{strict_soln}
Given $u_T \in H^1_0 ( \Omega )$ and $F \in L^2 ( (0, T) \times \Omega )$, we call
\[
u \in C^0 ( [ 0, T ]; H^1_0 ( \Omega ) ) \cap H^1 ( ( 0, T ); L^2 ( \Omega ) ) \cap L^2 ( ( 0, T ); \mf{D} ( A_\sigma ) )
\]
a \emph{strict solution} of Problem (OI) iff:
\begin{itemize}
\item $( \partial_t + \Delta_\sigma + X \cdot \nabla + V ) u = F$ almost everywhere on $( 0, T ) \times \Omega$.

\item $u ( T ) = u_T$ holds as an equality in $H^1_0 ( \Omega )$.
\end{itemize}
\end{definition}

\begin{proposition} \label{wp_strict}
Suppose $u_T \in H^1_0 ( \Omega )$ and $F \in L^2 ( (0, T) \times \Omega )$.
Then, the mild solution $u$ from Proposition \ref{wp_mild} is also the unique strict solution to Problem (OI).
Furthermore, $u$ satisfies the energy inequality
\begin{align}
\label{energy_strict} &\| u \|_{ L^\infty ( [0, T]; H^1 ( \Omega ) ) }^2 + \| y^{-\ka} \nabla [ y^{2\ka} \nabla ( y^{-\ka} u ) ] \|_{ L^2 ( ( 0, T ) \times \Omega ) }^2 \\
\notag &\quad \lesssim \| u_T \|_{ H^1 ( \Omega ) }^2 + \| F \|_{ L^2 ( ( 0, T ) \times \Omega ) }^2 \text{,}
\end{align}
again with the constant depending only on $\Omega$, $\sigma$, $X$, $V$.
\end{proposition}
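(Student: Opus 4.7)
The plan is to upgrade the mild solution of Proposition \ref{wp_mild} to a strict one via a two-stage argument: first derive the sharper regularity and the estimate \eqref{energy_strict} for smooth data using an energy identity, then pass to the limit by density. Throughout, I exploit the splitting $A_\si = A_\si^{\mathrm{sym}} + X \cdot \nab + V_y$, where $A_\si^{\mathrm{sym}} \phi := y^{-\ka} \nab \cdot [ y^{2\ka} \nab ( y^{-\ka} \phi ) ]$ is the formally self-adjoint principal part (see Remark \ref{wp_y}) and $( X, V_y ) \in \mc{Z}$ collects the lower-order perturbation.

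Assume first that $u_T \in \mf{D} ( A_\si )$ and that $F \in C^1 ( [0, T]; L^2(\Om) )$ takes values in $\mf{D} ( A_\si )$. The standard semigroup theory from Proposition \ref{elliptic}, combined with the time reversal of Remark \ref{time_reverse}, then ensures that the mild solution \eqref{mild_duhamel} in fact lies in $C^0 ( [0,T]; \mf{D} ( A_\si ) ) \cap C^1 ( [0,T]; L^2(\Om) )$ and satisfies $\pd_t u + A_\si u = F$ as a pointwise $L^2$-identity, so it is already strict. Pairing the PDE with $\pd_t u$ in $L^2 ( \Om )$ and integrating by parts the symmetric piece---the boundary contributions vanish because $\ka \in (-\tfrac{1}{2}, 0)$ forces $y^{-\ka}$ to vanish on $\Ga$---produces the key differential identity
\[
\| \pd_t u \|_{L^2(\Om)}^2 + \tfrac{1}{2} \tfrac{d}{dt} \| y^\ka \nab ( y^{-\ka} u ) \|_{L^2(\Om)}^2 = \langle F, \pd_t u \rangle - \langle ( X \cdot \nab + V_y ) u, \pd_t u \rangle \text{.}
\]
Young's inequality absorbs the two right-hand side inner products into half of $\| \pd_t u \|^2$; integrating from $t$ to $T$, exchanging $\| y^\ka \nab ( y^{-\ka} \cdot ) \|_{L^2}$ for $\| \cdot \|_{H^1}$ via Corollary \ref{prop_hardy_norm}, and running a backward Gronwall argument then yields bounds on $\| u \|_{L^\infty_t H^1}$ and $\| \pd_t u \|_{L^2_{t,x}}$.

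With $\pd_t u$ controlled in $L^2$, I recover the second-derivative bound by rewriting $A_\si u = F - \pd_t u$ and feeding this slice-by-slice into the elliptic regularity estimate \eqref{elliptic_H2}; squaring and integrating in $t$ delivers the full \eqref{energy_strict} and shows $u \in L^2 ( (0,T); \mf{D}(A_\si) )$. For general $u_T \in H^1_0 ( \Om )$ and $F \in L^2 ( (0,T) \times \Om )$, I approximate by smooth sequences $u_T^{(k)} \to u_T$, $F^{(k)} \to F$ satisfying the regularity of the smooth-data step, and apply the same two steps to the differences: the smooth solutions form a Cauchy sequence in every norm appearing in \eqref{energy_strict}, whose limit inherits the regularity and the estimate and agrees with the mild solution of Proposition \ref{wp_mild} by the $L^2$-continuity of \eqref{mild_duhamel}. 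Uniqueness of the strict solution is inherited from that of the mild one: applying the variation of constants formula to $w (t) := u (t) - e^{(T-t) A_\si} u_T + \int_t^T e^{(s-t) A_\si} F (s) \, ds$ shows any strict solution is mild. Finally, $u \in C^0 ( [0, T]; H^1_0 ( \Om ) )$ follows from the uniform convergence of the approximations in $L^\infty_t H^1_x$.

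The main technical obstacle is justifying the integration by parts for $A_\si^{\mathrm{sym}}$ against $\pd_t u$: the potential is singular at $\Ga$ and $\pd_t u$ is only square-integrable in time, so the manipulation must be performed at the smooth-data level and transferred by density. The hypothesis $\si < 0$ (equivalently $\ka < 0$) is essential here, as it makes $y^{-\ka} u$ have a genuine vanishing trace on $\Ga$ and thus kills the boundary contributions from the twisted Green's identity. In contrast, the non-self-adjoint lower-order piece $X \cdot \nab + V_y$ is harmless under Young's inequality and Gronwall, precisely because $( X, V_y ) \in \mc{Z}$ is bounded relative to the $H^1$-norm via Hardy.
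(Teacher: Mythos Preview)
Your proof is correct and follows essentially the same energy-method approach as the paper: both pair the equation with $\partial_t u$ (equivalently, differentiate $\| y^\ka \nabla ( y^{-\ka} u ) \|_{L^2}^2$ in time), work first at the level $u_T \in \mf{D}(A_\sigma)$, and pass to general data by density. The only cosmetic difference is that the paper substitutes $\partial_t u = F - A_\sigma u$ into the energy identity to control $\| A_\sigma^{\mathrm{sym}} u \|_{L^2_{t,x}}$ directly, whereas you first bound $\| \partial_t u \|_{L^2_{t,x}}$ and then recover the second-order term from the equation via \eqref{elliptic_H2}; these are equivalent rearrangements of the same computation.
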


\begin{proof}[Proof sketch.]
That the mild solution is also the strict solution is immediate.
For \eqref{energy_strict}, we again need only consider $u_T \in \mf{D} ( A_\sigma )$.

By the fundamental theorem of calculus, integrations by parts, and \eqref{heat_ex_y},
\begin{align*}
&\| y^\ka \nabla ( y^{-\ka} u (T) ) \|_{ L^2 ( \Omega ) }^2 - \| y^\ka \nabla ( y^{-\ka} u (t) ) \|_{ L^2 ( \Omega ) }^2 \\
&\quad = - 2 \int_t^T \int_\Om \partial_t u \,\, y^{-\ka} \nabla \cdot [ y^{2\ka} \nabla ( y^{-\ka} u ) ] \big|_{t=s} ds \\
&\quad = 2 \int_t^T \int_\Om ( - F + X \cdot \nabla u + V_y u ) \,\, y^{-\ka} \nabla \cdot [ y^{2\ka} \nabla ( y^{-\ka} u ) ] \big|_{t=s} ds \\
&\quad\qquad + 2 \int_t^T \int_\Om | y^{-\ka} \nabla \cdot [ y^{2\ka} \nabla ( y^{-\ka} u ) ] |^2 \big|_{t=s} ds \text{.}
\end{align*}
Rearranging the above and applying Hardy's inequality then yields
\begin{align*}
&\| u \|_{ L^\infty ( [0, T]; H^1 ( \Omega ) ) }^2 + \| y^{-\ka} \nabla \cdot [ y^{2\ka} \nabla ( y^{-\ka} u ) ] \|_{ L^2 ( ( 0, T ) \times \Omega ) }^2 \\
&\quad \lesssim \| u_T \|_{ H^1 ( \Omega ) }^2 + \| F \|_{ L^2 ( ( 0, T ) \times \Omega ) }^2 \text{.}
\end{align*}
The desired \eqref{energy_strict} now follows from the above and from \eqref{elliptic_H2}.
\end{proof}

\begin{remark}
While our well-posedness theory only applies when the lower-order coefficients $X$ and $V$ are time-independent, this restriction is not essential.
In fact, one can also treat time-dependent $X$ and $V$ using a Galerkin method approach; see \cite{Warnick}, which develops this theory for critically singular hyperbolic equations.
\end{remark}

\subsection{The Neumann Trace} \label{sec.neumann}

From now on, we will focus mainly be on strict solutions to Problem (OI), which are particularly relevant as this level of regularity is sufficient to define and control the Neumann boundary trace.

\begin{proposition} \label{neumann_trace}
Fix $u_T \in H^1_0 ( \Omega )$ and $F \in L^2 ( (0, T) \times \Omega )$, and let $u$ denote the strict solution to Problem (OI) (with this $u_T$ and $F$).
Then, the Neumann trace $\mc{N}_\sigma u$ is well-defined in $L^2 ( (0, T) \times \Gamma )$ and satisfies the bound
\begin{equation}
\label{hidden_reg} \| \mc{N}_\sigma u \|_{ L^2 ( ( 0, T ) \times \Gamma ) }^2 \lesssim \| u_T \|_{ H^1 ( \Omega ) }^2 + \| F \|_{ L^2 ( ( 0, T ) \times \Omega ) }^2 \text{,}
\end{equation}
where the constant in the above depends on $\Omega, \sigma, X, V$.

Furthermore, the following limit holds in $L^2 ( ( 0, T ) \times \Gamma )$:
\begin{equation}
\label{hidden_dirichlet} \lim_{ d_\Ga \rightarrow 0 } d_\Ga^{-1+\kappa} u = \tfrac{1}{ 1 - 2 \kappa } \mc{N}_\sigma u \text{.}
\end{equation}
\end{proposition}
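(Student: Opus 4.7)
The plan is to work in Fermi coordinates $(r,\om)$ in the tubular neighborhood $\{d_\Ga<d_0\}$, where $r=d_\Ga$ and $\nab d_\Ga=\pd_r$; there the boundary defining function $y$ agrees with $d_\Ga$. I set $w:=d_\Ga^{-\ka}u$ and $g:=d_\Ga^{2\ka}\pd_r w$, so that $\mc{N}_\si u = g|_{r=0}$ by definition. Since $\ka\in(-\tfrac12,0)$, both $d_\Ga^{-\ka}$ and $u\in H^1_0(\Om)$ vanish at $\Ga$, hence $w$ extends continuously with $w|_\Ga=0$. A cutoff reduces the problem to $\{d_\Ga<d_0\}$, as away from $\Ga$ the Neumann trace is trivially controlled by the bulk $H^1$-regularity of $u$.

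The key input is the strict-solution estimate \eqref{energy_strict} in its full tensorial form: $y^{-\ka}\nab[y^{2\ka}\nab(y^{-\ka}u)]\in L^2((0,T)\times\Om)$ with norm bounded by $\|u_T\|_{H^1}+\|F\|_{L^2}$. In Fermi coordinates near $\Ga$, the $rr$-component of this weighted Hessian is exactly $r^{-\ka}\pd_r g$, so I would extract that $r^{-\ka}\pd_r g\in L^2((0,T)\times\{d_\Ga<d_0\})$. For $0<r_*<r_0\ll d_0$, Cauchy-Schwarz in $r$ gives pointwise
\[
|g(r_0,\om,t)-g(r_*,\om,t)|^2 \leq \Big(\int_{r_*}^{r_0} r^{2\ka}\,dr\Big)\int_{r_*}^{r_0} r^{-2\ka}(\pd_r g)^2\,dr \leq C\,r_0^{1+2\ka}\int_{r_*}^{r_0} r^{-2\ka}(\pd_r g)^2\,dr,
\]
where $\int_{r_*}^{r_0}r^{2\ka}\,dr$ is finite precisely because $1+2\ka>0$, i.e. $\si>-\tfrac34$. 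Integrating over $\Ga\times(0,T)$ and using absolute continuity of the bulk $L^2$-norm of $r^{-\ka}\pd_r g$ on the shrinking shell, the family $(g(r,\cdot,\cdot))_{r>0}$ is Cauchy in $L^2((0,T)\times\Ga)$. Its limit is $\mc{N}_\si u$, and combining with a standard trace estimate on a fixed interior slice $\{d_\Ga=r_0\}$ (where $g$ has bulk $H^1$-control from the same energy bound) yields \eqref{hidden_reg}.

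For the identification \eqref{hidden_dirichlet}, I would integrate $\pd_r w = r^{-2\ka}g$ starting from $w(0,\om,t)=0$ to obtain $w(r,\om,t)=\int_0^r s^{-2\ka}g(s,\om,t)\,ds$. Since $u=d_\Ga^\ka w$, splitting $g(s)=g(0)+[g(s)-g(0)]$ and evaluating $\int_0^r s^{-2\ka}\,ds = r^{1-2\ka}/(1-2\ka)$ gives
\[
d_\Ga^{-1+\ka}u = r^{-1+2\ka}w(r,\om,t) = \tfrac{1}{1-2\ka}\,g(0,\om,t) + r^{-1+2\ka}\int_0^r s^{-2\ka}[g(s,\om,t)-g(0,\om,t)]\,ds.
\]
A further Cauchy-Schwarz estimate of the remainder, combined with the quantitative Cauchy bound $\|g(s)-g(0)\|_{L^2((0,T)\times\Ga)}^2 \lesssim s^{1+2\ka}\,\varepsilon(s)$ with $\varepsilon(s)\to 0$ (from the previous paragraph), yields $L^2$-convergence at rate $O(r^{(1+2\ka)/2})$, establishing \eqref{hidden_dirichlet}.

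The main obstacle is identifying the right piece of regularity to exploit: the natural bound $y^\ka\nab w\in L^2$ is not enough to produce the trace, and one must read the $rr$-component $r^{-\ka}\pd_r g$ of the weighted Hessian directly from \eqref{energy_strict}. The restriction $\si>-\tfrac34$ then enters at the sharp threshold $1+2\ka>0$ making $r^{2\ka}$ integrable at $r=0$, consistent with the conceptual role of this bound discussed in the introduction. Secondary technical nuisances are the Fermi-coordinate geometric corrections (mean-curvature and tangential Laplacian), which are lower-order relative to the radial principal part and absorb harmlessly into the modified potential $V_y$ from Remark \ref{wp_y}.
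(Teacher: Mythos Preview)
Your proposal is correct and follows essentially the same approach as the paper. Both arguments extract the $rr$-component $r^{-\ka}\pd_r[r^{2\ka}\pd_r(r^{-\ka}u)]$ from the weighted $H^2$-estimate \eqref{energy_strict}, use Cauchy--Schwarz with the splitting $r^{\ka}\cdot r^{-\ka}$ (relying precisely on $1+2\ka>0$) to show that the traces on level sets form a Cauchy family in $L^2((0,T)\times\Ga)$, and then integrate $\pd_r(y^{-\ka}u)$ from the boundary condition $u|_\Ga=0$ to obtain \eqref{hidden_dirichlet}. The only cosmetic differences are that the paper uses a cutoff $\chi$ to derive the bound \eqref{hidden_reg} in one stroke (rather than combining the Cauchy argument with a separate interior-slice trace), and uses Minkowski's inequality rather than Cauchy--Schwarz on the remainder for \eqref{hidden_dirichlet}; both variants work and yield the same conclusions.
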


\begin{proof}
For any $x \in \Ga$ and $0 < y_0 < 2 d_0$, we let $\eta_{ y_0 } (x)$ denote the point on the level set $\{ y = y_0 \}$ that is reached from $x$ along the integral curve of $\nabla y$.
Letting $d S$ be the surface measure on $\Ga$, then for any $0 < y_0' < y_0 < 2 d_0$,
\begin{align*}
&\int_{ ( 0, T ) \times \Ga } \Big[ y^{2\ka} D_y ( y^{-\ka} u ) \big|_{ ( t, \eta_{ y_0 } (x) ) } - y^{2\ka} D_y ( y^{-\ka} u ) \big|_{ ( t, \eta_{ y_0' } (x) ) } \Big]^2 dS (x) \, dt \\
&\quad = \int_{ ( 0, T ) \times \Ga } \bigg( \int_{ y_0' }^{ y_0 } D_y [ y^{2\ka} D_y ( y^{-\ka} u ) ] \big|_{ ( t, \eta_y (x) ) } dy \bigg)^2 dS (x) dt \\
&\quad \leq \int_{ y_0' }^{ y_0 } y^{2\ka} dy \cdot \int_{ ( 0, T ) \times \Ga } \int_{ y_0' }^{ y_0 } \Big| y^{-\ka} D_y [ y^{2\ka} D_y ( y^{-\ka} u ) ] \big|_{ ( t, \eta_y (x) ) } \Big|^2 dy \, dS (x) dt \\
&\quad \lesssim (1+2\ka)^{-1} y_0^{1+2\ka} \cdot \int_0^T \int_\Om | y^{-\ka} \nabla [ y^{2\ka} \nabla ( y^{-\ka} u (s) ) ] |^2 \, ds \text{,}
\end{align*}
where we used that $2 \ka > -1$ and that $y = d_\Ga$ near $\Ga$.
By the inequality \eqref{energy_strict}, the right-hand side of the above vanishes when $y_0 \searrow 0$, and it hence follows that $\mc{N}_\sigma u$ exists as an element of $L^2 ( ( 0, T ) \times \Gamma )$.

Next,	let $\chi: \R \rightarrow [ 0, 1 ]$ be a cutoff function satisfying
\[
\chi (s) = \begin{cases} 1 & s < d_0 \text{,} \\ 0 & s > \tfrac{3 d_0}{2} \text{.} \end{cases}
\]
Then, a similar computation as before, again using that $2 \ka > -1$, yields 
\begin{align*}
\int_{ ( 0, T ) \times \Ga } ( \mc{N}_\sigma u)^2 &= \int_{ ( 0, T ) \times \Ga } \bigg( \int_{0}^{2 d_0} D_y [ \chi (y) \, y^{2\ka} D_y(y^{-\ka } u) ] \big|_{ ( t, \eta_y (x) ) } dy \bigg)^2 dS(x) dt \\
&\lesssim d_0^{1+2\ka} \int_{ ( 0, T ) \times \Ga } \int_0^{2 d_0} \Big| y^{-\ka} D_y [ y^{2\ka} D_y (y^{-\ka} u) ] \big|_{ ( t, \eta_y (x) ) } \Big|^2 dy \, dS (x) dt \\
&\qquad + d_0^{1+2\ka} \int_{ ( 0, T ) \times \Ga } \int_0^{2 d_0} \Big| y^\ka D_y (y^{-\ka} u) \big|_{ ( t, \eta_y (x) ) } \Big|^2 dy \, dS (x) dt \\
&\lesssim \int_0^T \int_\Om | y^{-\ka} \nabla [ y^{2\ka} \nabla ( y^{-\ka} u (s) ) ] |^2 \, ds + \| u \|_{ L^2 ( ( 0, T ); H^1 ( \Omega ) ) }^2 \text{,}
\end{align*}
where we also used \eqref{hardy_norm_equiv}.
The bound \eqref{hidden_reg} follows from \eqref{energy_strict} and the above.

Next, for \eqref{hidden_dirichlet}, we first note, for any $0 < y_0 < 2 d_0$, that
\begin{align*}
&\int_{ ( 0, T ) \times \Ga } \Big( y^{\ka-1} u \big|_{ ( t, \eta_{ y_0 } (x) ) } - \tfrac{1}{1-2\ka} \mc{N}_\sigma u \big|_{ ( t, x ) } \Big)^2 dS (x) dt \\
&\quad = \int_{ ( 0, T ) \times \Ga } \bigg[ y_0^{2\ka-1} \int_0^{y_0} D_y (y^{-\ka} u) \big|_{ ( t, \eta_y (x) ) } dy - \tfrac{1}{1-2\ka} \mc{N}_\sigma u \big|_{ ( t, x ) } \bigg]^2 dS(x) dt \\
&\quad = \int_{ ( 0, T ) \times \Ga } \bigg( y_0^{2\ka-1}\int_{0}^{y_0}y^{-2\ka} \big[ y^{2\ka} D_y (y^{-\ka} u) \big|_{ ( t, \eta_y (x) ) } - \mc{N}_\sigma u \big|_{ ( t, x ) } \big] dy \bigg)^2 dS(x) dt \text{,}
\end{align*}
where we used the boundary condition $u = 0$ on $\Ga$ from \eqref{heat_ex}, and that $\ka < 0$.
We next employ Minkowski's inequality on the above to derive
\begin{align*}
&\bigg[ \int_{ ( 0, T ) \times \Ga } \Big( y^{\ka-1} u \big|_{ ( t, \eta_y (x) ) } - \tfrac{1}{1-2\ka} \mc{N}_\sigma u \big|_{ ( t, x ) } \Big)^2 dS (x) dt \bigg]^\frac{1}{2} \\
&\quad \lesssim y_0^{2\ka-1} \int_0^{y_0} y^{-2\ka} \bigg[ \int_{ ( 0, T ) \times \Ga } \Big( y^{2\ka} D_y (y^{-\ka} u) \big|_{ ( t, \eta_y (x) ) } - \mc{N}_\sigma u \big|_{ ( t, x ) } \Big)^2 dS(x) dt \bigg]^\frac{1}{2} d y \\
&\quad \lesssim \sup_{ 0 < y < y_0 } \bigg[ \int_{ ( 0, T ) \times \Ga } \Big( y^{2\ka} D_y (y^{-\ka} u) \big|_{ ( t, \eta_y (x) ) } - \mc{N}_\sigma u \big|_{ ( t, x ) } \Big)^2 dS(x) dt \bigg]^\frac{1}{2} \text{.}
\end{align*}
Since $\mc{N}_\sigma u \in L^2 ( ( 0, T ) \times \Ga )$, its definition \eqref{bdry_data} implies the above converges to $0$ as $y_0 \searrow 0$.
This immediately implies the desired limit \eqref{hidden_dirichlet}.
\end{proof}

\begin{remark}
With some modification, one can extend the preceding well-posedness theory (Propositions \ref{wp_mild} and \ref{wp_strict}) and Proposition \ref{neumann_trace} to $0 \leq \sigma < \frac{1}{4}$.
\end{remark}

Next, we prove a technical result, roughly stating that the least regular boundary term in the Carleman estimate \eqref{intC} indeed vanishes in our present setting:

\begin{proposition} \label{stupid_boundary}
Let $u_T \in H^1_0 ( \Omega )$, and let $u$ denote the strict solution to Problem (O) (that is, Problem (OI) without forcing term $F \equiv 0$).
Then, $u$ satisfies
\begin{align}
\label{stupid_limit} \lim_{ \delta \searrow 0 } \int_{ (0,T) \times \{ y = \delta \} } e^{-2 \lambda \mc{F}} \, \partial_t ( y^{-\ka} u ) \, y^{2\ka} D_y ( y^{-\ka} u ) &= 0 \text{,} \\
\notag \lim_{ \delta \searrow 0 } \int_{ (0,T) \times \{ y = \delta \} } e^{-2 \lambda \mc{F}} \, \partial_t ( y^{-\ka} u ) \, y^{-1+\ka} u &= 0 \text{,}
\end{align}
for any $\lambda > 0$, where $\mc{F}$ denotes the weight
\begin{equation}
\label{stupid_F} \mc{F} ( t, x ) := \tfrac{1}{ t (T - t) } [ y (x)^{ 1 + 2 \ka } + \beta ] \text{,} \qquad \beta > 0 \text{.}
\end{equation}
\end{proposition}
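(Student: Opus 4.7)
The plan is to transfer the time derivative off of $u$ via integration by parts in $t$, and then to apply the boundary asymptotics from Proposition \ref{neumann_trace} on the level hypersurface $\{y = \delta\}$. Throughout, I will set $w := y^{-\ka} u$; since $y$ is time-independent, $\partial_t(y^{-\ka} u) = y^{-\ka} \partial_t u = \partial_t w$.

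For the second limit in \eqref{stupid_limit}, observe that on $\{y = \delta\}$ we have $y^{-1+\ka} u = \delta^{-1+2\ka} w$, so the integrand equals $\tfrac{1}{2} \delta^{-1+2\ka} e^{-2\la \mc{F}} \partial_t(w^2)$. Integration by parts in $t$ produces no temporal boundary contribution, because $\theta \to \infty$ at $t = 0, T$ forces $e^{-2\la \mc{F}}$ to vanish there to all orders. What remains is
\[
\la \delta^{-1+2\ka} \int_{(0,T) \times \{y = \delta\}} \partial_t \mc{F} \, e^{-2\la \mc{F}} \, w^2 \text{.}
\]
By Proposition \ref{neumann_trace} we have $u \sim y^{1-\ka} (1-2\ka)^{-1} \mc{N}_\si u$ near $\Ga$, so $w^2|_{y=\delta}$ is of order $\delta^{2-4\ka}$ in $L^1((0,T) \times \Ga)$, while $\partial_t \mc{F} \, e^{-2\la \mc{F}}$ is uniformly bounded in $\delta$ and $t$. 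The whole integral is thus $O(\delta^{1-2\ka})$, which vanishes as $\delta \searrow 0$ since $\ka < \tfrac{1}{2}$.

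For the first limit, the product $\partial_t w \cdot y^{2\ka} D_y w$ is not a pure time derivative, so I instead integrate by parts in $t$ to obtain (with no temporal boundary contribution, as above)
\[
\int_0^T e^{-2\la \mc{F}} \partial_t w \cdot y^{2\ka} D_y w \, dt = -\int_0^T w \, \partial_t [e^{-2\la \mc{F}} y^{2\ka} D_y w] \, dt \text{.}
\]
The contribution from $\partial_t e^{-2\la \mc{F}}$ takes the form $\la \partial_t \mc{F} \, e^{-2\la \mc{F}} \cdot w \cdot y^{2\ka} D_y w$, which on $\{y = \delta\}$ is $O(\delta^{1-2\ka})$ since $w|_{y=\delta} = O(\delta^{1-2\ka})$ and $y^{2\ka} D_y w \to \mc{N}_\si u$ in $L^2$ by Proposition \ref{neumann_trace}. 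The remaining piece $-\int w \, e^{-2\la \mc{F}} \, y^{2\ka} D_y \partial_t w$ brings $\partial_t u$ back through $y^{2\ka} D_y \partial_t w = y^{2\ka} D_y(y^{-\ka} \partial_t u)$; I would eliminate it using the equation $\partial_t u = -\De_\si u - X \cdot \nab u - V u$ from Problem (O) and rewrite $\De_\si u$ in the twisted form from Remark \ref{wp_y}. Each resulting contribution carries the small factor $w|_{y=\delta} = O(\delta^{1-2\ka})$ paired against quantities controlled by Proposition \ref{neumann_trace} and Definition \ref{lower_order_def}, so the total remains $O(\delta^{1-2\ka}) \to 0$.

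The main technical obstacle is that $\partial_t u$ is only $L^2$ in space-time for strict solutions, so it has no \emph{a priori} trace on $\{y = \delta\}$, and the pointwise manipulations in the first limit are formally justified only in a regularized setting. I plan to circumvent this by first establishing both limits for smooth approximating data $u_T^\ep \in C^\infty_c(\Om)$---for which $u^\ep$ is classical in the interior and the trace identities hold pointwise---and then passing to the limit $\ep \to 0$, using that the $O(\delta^{1-2\ka})$ bounds are uniform in $\ep$ by the quantitative estimates of Propositions \ref{wp_strict} and \ref{neumann_trace}.
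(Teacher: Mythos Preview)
Your argument for the second limit is fine, but the first limit has a genuine gap. After your integration by parts in $t$, the residual term
\[
-\int_{(0,T)\times\{y=\delta\}} e^{-2\la\mc{F}}\, w \cdot y^{2\ka} D_y \partial_t w
\]
is the real difficulty, and substituting the equation $\partial_t u = -\Delta_\sigma u - X\cdot\nabla u - Vu$ does not help: the leading piece becomes $y^{2\ka} D_y(y^{-\ka}\Delta_\sigma u)$, which is a \emph{third}-order spatial quantity in $u$ (equivalently, it is $\mc{N}_\sigma$ applied to $\Delta_\sigma u$). Neither Proposition~\ref{wp_strict} nor Proposition~\ref{neumann_trace} controls this---those results stop at the weighted second-order norm $\|y^{-\ka}\nabla[y^{2\ka}\nabla(y^{-\ka}u)]\|_{L^2}$. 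Your claim that the resulting bounds are uniform in the smoothing parameter $\ep$ therefore fails precisely here: for smooth $u^\ep$ the trace exists pointwise, but its size is governed by a norm one order higher than anything bounded by $\|u_T\|_{H^1}$, so you cannot pass to the limit.

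The paper avoids this obstruction by a different mechanism. Rather than trying to show the surface integral is small in $\delta$, it converts the surface integral into a volume integral via the divergence theorem in $x$, so that one spatial derivative is absorbed by the integration and only second-order regularity of $u$ is needed to see that the expression defines a \emph{continuous} functional $\mc{B}_1:H^1_0(\Omega)\to\R$. It then checks that $\mc{B}_1$ vanishes on the dense subspace $\mf{D}(A_\sigma)$: for $u_T\in\mf{D}(A_\sigma)$, the semigroup identity shows $\partial_t u$ is itself a mild solution with $L^2$ data, hence $\partial_t u \in L^2((0,T);H^1_0)$ and $\mc{D}_\sigma(\partial_t u)=0$, making the surface integrand vanish outright. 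Continuity plus density then gives the result for all $u_T\in H^1_0$. This two-step structure---bound first, vanish on a dense set second---is what allows the proof to stay within the available regularity.
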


\begin{proof}[Proof sketch.]
Define the bilinear maps $\mc{B}_1, \mc{B}_2: H^1_0 ( \Omega ) \rightarrow \R$ by
\begin{align}
\label{stupid_0} \mc{B}_1 ( u_T ) &:= \lim_{ \delta \searrow 0 } \int_{ ( 0, T ) \times \{ y = \delta \} } e^{-2 \lambda \mc{F}} \, \partial_t ( y^{-\kappa} u ) \, y^{2\kappa} D_y ( y^{-\kappa} u ) \text{,} \\
\notag \mc{B}_2 ( u_T ) &:= \lim_{ \delta \searrow 0 } \int_{ ( 0, T ) \times \{ y = \delta \} } e^{-2 \lambda \mc{F}} \, \partial_t ( y^{-\kappa} u ) \, y^{-1+\kappa} u \text{,}
\end{align}
where $u$ is the strict solution to Problem (O) with final data $u_T$.
It then suffices to show that both $\mc{B}_1$ and $\mc{B}_2$ are everywhere vanishing.

The main step is to show that both $\mc{B}_1$ and $\mc{B}_2$ are well-defined and finite.
The process is similar to the proof of Proposition \ref{neumann_trace}, except we need more care with regularity.
Somewhat informally, we use the divergence theorem to bound $\mc{B}_1$ by
\begin{align}
\label{stupid_10} | \mc{B}_1 ( u_T ) | &= \bigg| \lim_{ \delta \searrow 0 } \int_{ ( 0, T ) \times \{ y > \delta \} } \nabla \cdot [ e^{-2 \lambda \mc{F}} \, \partial_t ( y^{-\kappa} u ) \, y^{2\kappa} \nabla ( y^{-\kappa} u ) ] \bigg| \\
\notag &\leq \tfrac{1}{2} \limsup_{ \delta \searrow 0 } \bigg| \int_{ ( 0, T ) \times \{ y > \delta \} } e^{-2 \lambda \mc{F}} \partial_t [ | y^\ka \nabla ( y^{-\ka} u ) |^2 ] \bigg| \\
\notag &\qquad + C \int_{ ( 0, T ) \times \Om } | \partial_t u | \big| y^{-\ka} \nabla \cdot [ y^{2\ka} \nabla ( y^{-\ka} u ) ] \big| \\
\notag &\qquad + C \int_{ ( 0, T ) \times \Om } | \partial_t u | \, y^\ka | y^{2\ka} D_y ( y^{-\ka} u ) | \\
\notag &\lesssim \sup_{ t \in [ 0, T ] } \int_\Om | y^\ka \nabla ( y^{-\ka} u (t) ) |^2 + \int_{ ( 0, T ) \times \Om } y^{2\ka} | y^{2\ka} D_y ( y^{-\ka} u ) |^2 \\
\notag &\qquad + \int_{ ( 0, T ) \times \Om } \big( | \partial_t u |^2 + \big| y^{-\ka} \nabla \cdot [ y^{2\ka} \nabla ( y^{-\ka} u ) ] \big|^2 \big) \\
\notag &= I_{1, 1} + I_{1, 2} + I_{1, 3} \text{,}
\end{align}
where all constants here and below can depend on $\Om$, $\sigma$, $X$, $V$, as well as $\beta, \lambda$, and where we also noted that $\lambda^3 t^{-3} (T - t)^{-3} e^{-2 \la \mc{F}}$ is bounded.

For $I_{1, 1}$, we apply Corollary \ref{prop_hardy_norm} and Proposition \ref{wp_strict}, which yield
\begin{equation}
\label{stupid_11} I_{1, 1} \lesssim \| u_T \|_{ H^1 ( \Omega ) }^2 \text{.}
\end{equation}
For $I_{ 1, 3 }$, we recall the heat equation \eqref{heat_obs}, Proposition \ref{prop_hardy_main}, and \eqref{energy_strict} to obtain
\begin{equation}
\label{stupid_12} I_{1, 3} \lesssim \| u_T \|_{ H^1 ( \Omega ) }^2 \text{.}
\end{equation}
For $I_{ 1, 2 }$, we integrate the pointwise Hardy inequality of Lemma \ref{pointHardy}, with parameters $q := 1 + \ka$ and $v := y^{2\ka} D_y ( y^{-\ka} u )$, and we recall Propositions \ref{wp_strict} and \ref{neumann_trace}:
\begin{align*}
I_{1, 2} &\lesssim \int_{ ( 0, T ) \times \Om } \big( y^{2+2\ka} | D_y [ y^{2\ka} D_y ( y^{-\ka} u ) ] |^2 + y^{1+2\ka} | y^{2\ka} D_y ( y^{-\ka} u ) |^2 \big) \\
&\qquad + \lim_{ \delta \searrow 0 } \int_{ ( 0, T ) \times \{ y = \delta \} } y^{1+2\ka} | y^{2\ka} D_y ( y^{-\ka} u ) |^2 \\
&\lesssim \| u_T \|_{ H^1 ( \Omega ) }^2 + \int_{ ( 0, T ) \times \Om } y^{1+2\ka} | y^{2\ka} D_y ( y^{-\ka} u ) |^2 \text{.}
\end{align*}
(Formally, the region near the critical point of $y$, where $| \nabla y | = 0$, can be trivially treated.)
Integrating Lemma \ref{pointHardy} again, now with $q := \frac{3}{2} + \ka$, yields
\begin{equation}
\label{stupid_13} I_{1, 2} \lesssim \| u_T \|_{ H^1 ( \Omega ) }^2 \text{.}
\end{equation}
Combining \eqref{stupid_10}--\eqref{stupid_13}, we see that
\[
\mc{B}_1 ( u_T ) \lesssim \| u_T \|_{ H^1 ( \Omega ) }^2 \text{.}
\]
Also, applying the above to differences of final data, we see that $\mc{B}_1$ is continuous.

Next, for $\mc{B}_2$, we have
\begin{align*}
| \mc{B}_2 ( u_T ) | &= \tfrac{1}{2} \left| \int_{ ( 0, T ) \times \Gamma } e^{-2 \lambda \mc{F}} \partial_t ( y^{-1} u^2 ) \right| \\
&\lesssim \sup_{ t \in [ 0, T ] } \lim_{ \delta \searrow 0 } \int_{ \{ y = \delta \} } y^{-1} | u (t) |^2 + \lim_{ \delta \searrow 0 } \int_{ ( 0, T ) \times \{ y = \delta \} } y^{2\ka} u^2 \text{.}
\end{align*}
The last term in the above vanishes by Proposition \ref{neumann_trace}.
For the remaining term, we again integrate Lemma 2.6, with $q = 0$, which yields
\begin{align*}
| \mc{B}_2 ( u_T ) | &\lesssim \sup_{ t \in [ 0, T ] } \int_\Om ( | D_y u |^2 + y^{-1} u^2 ) \\
&\lesssim \| u_T \|_{ H^1 ( \Omega ) }^2 \text{,}
\end{align*}
where in the last step, we also applied Propositions \ref{prop_hardy_main} and \ref{wp_strict}.
Like for $\mc{B}_1$, the above suffices to imply the finiteness and continuity of $\mc{B}_2$.

(Formally, to rigorously show $\mc{B}_1 ( u_T )$, $\mc{B}_2 ( u_T )$ are well-defined, we would need, as in the proof of Proposition 3.14, to estimate differences of the associated integrals over $( 0, T ) \times \{ y = \delta_1 \}$ and $( 0, T ) \times \{ y = \delta_2 \}$, with $\delta_1, \delta_2 \searrow 0$.
However, we skip this step here, as the details of this are analogous to the above.)

Finally, by continuity, it suffices to show $\mc{B}_1$ and $\mc{B}_2$ vanish on a dense subspace of $H^1_0 ( \Omega )$.
For this, we consider the domain $\mf{D} ( A_\sigma )$ from Proposition \ref{elliptic}.
Observe in particular that if $u_T \in \mf{D} ( A_\sigma )$, then the relations (see \eqref{mild_duhamel})
\[
u (t) = e^{ (T-t) A_\sigma } u_T \text{,} \qquad \partial_t u (t) = e^{ (T-t) A_\sigma } ( -A_\sigma u_T )
\]
imply that $\partial_t u$ is a mild solution to Problem (O), with final data $- A_\sigma u_T \in L^2 ( \Omega )$.
Moreover, Proposition \ref{wp_mild} yields $\partial_t u \in L^2 ( ( 0, T ); H^1_0 ( \Omega ) )$, and hence $\mc{D}_\sigma ( \partial_t u ) = 0$ as an element of $L^2 ( ( 0, T ) \times \Gamma )$.
Applying the above to \eqref{stupid_0}, we now have both $\mc{B}_1 ( u_T ) = 0$ and $\mc{B}_2 ( u_T ) = 0$ whenever $u_T \in \mf{D} ( A_\sigma )$, as desired.
\end{proof}

\subsection{Observability}

Lastly, we state the key observability inequality and unique continuation property satisfied by solutions of Problem (O):

\begin{theorem} \label{obs}
Let $u_T \in H^1_0 ( \Omega )$, and let $u$ be the corresponding strict solution to Problem (O).
Then, the following observability estimate holds,
\begin{equation}
\label{obs_ineq} \| u (0) \|_{ H^1 ( \Omega ) }^2 \lesssim \| \mc{N}_\sigma u \|_{ L^2 ( ( 0, T ) \times \Gamma ) }^2 \text{,}
\end{equation}
with the constant of the inequality depending on $\Omega, \sigma, X, V$.

In particular, if $\mc{N}_\sigma u \equiv 0$ on $( 0, T ) \times \Gamma$, then $u \equiv 0$ on $[ 0, T ] \times \Omega$.
\end{theorem}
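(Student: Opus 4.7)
The plan is to apply the global Carleman estimate of Theorem \ref{T.preciseC} with the sharp choice $p = \tfrac{1}{2} + \kappa \in (0, \tfrac{1}{2})$, for which $2p = 1 + 2\kappa$ and $p^2 - 2p + \sigma = -\tfrac{3}{4}$ (using $\sigma = \kappa(1-\kappa)$). This is the unique value of $p$ tuned so that the Carleman weights generate the Neumann trace $\mc{N}_\sigma u$ on $\Ga$. Before applying \eqref{intC}, note that since $u$ solves Problem (O), one has $(\pd_t + \De_{\sigma, y_j}) u = \sigma(y_j^{-2} - d_\Ga^{-2}) u - X \cdot \nab u - V u$; the right-hand side is pointwise bounded by $|\nab u| + |u|$ (using that $y_j^{-2} - d_\Ga^{-2}$ is bounded on $\Om$ and that $(X,V)\in \mc{Z}$), and its square is absorbed into the positive bulk terms on the right of \eqref{intC} by taking $\lambda$ sufficiently large.

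Next, I would identify the boundary integrals in \eqref{intC} with $\| \mc{N}_\sigma u \|_{L^2((0,T)\times\Ga)}^2$. Writing $D_{y_j} u = \ka y_j^{\ka-1} u + y_j^\ka D_{y_j}(y_j^{-\ka} u)$ near $\Ga$ (where $|\nab y_j| = 1$) and combining Proposition \ref{neumann_trace} with \eqref{hidden_dirichlet} yields
\[
\limsup_{\delta \searrow 0} \int_{(0,T) \times \{y_j = \delta\}} e^{-2\la F_j} \big[ \la\te y_j^{-1+2p} (D_{y_j} u)^2 + \la^3\te^3 y_j^{-3+2p} u^2 \big] \lesssim C(\la) \| \mc{N}_\sigma u \|_{L^2}^2 \text{.}
\]
The remaining, least-regular boundary term decomposes near $\Ga$ as
\[
\pd_t u \, D_{y_j} u = \ka \pd_t(y_j^{-\ka} u) \cdot y_j^{-1+\ka} u + y_j^{2\ka} \pd_t(y_j^{-\ka} u) \, D_{y_j}(y_j^{-\ka} u) \text{,}
\]
and its contribution to \eqref{intC} vanishes in the limit by Proposition \ref{stupid_boundary} (after absorbing the factor $\tfrac{1}{1+2\ka}$ in $F_j$ into the parameter $\lambda$ used there).

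With these reductions, restricting to the middle sub-interval $t \in [T/4, 3T/4]$---on which $\te$ and $F_j$ are uniformly bounded so that $e^{-2\la F_j} \geq e^{-c\la}$, and where (since $\ka < 0$) each of $y_j^{-1+2p}$, $y_j^{-4+6p}$, $y_j^{-3+2p}$ is bounded below by a positive constant on $\Om$---yields
\[
\int_{T/4}^{3T/4} \| u(t) \|_{H^1(\Om)}^2 \, dt \lesssim \| \mc{N}_\sigma u \|_{L^2((0,T) \times \Ga)}^2 \text{.}
\]
For any $t^\ast \in [T/4, 3T/4]$, viewing $u$ on $[0, t^\ast]$ as the strict solution to Problem (O) with final data $u(t^\ast)$, Proposition \ref{wp_strict} gives $\| u(0) \|_{H^1}^2 \lesssim \| u(t^\ast) \|_{H^1}^2$; averaging in $t^\ast$ over $[T/4, 3T/4]$ produces the observability inequality \eqref{obs_ineq}. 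The unique continuation statement then follows by applying this estimate on $[t_0, T]$ for each $t_0 \in [0, T)$.

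The main obstacle is the $\pd_t u D_{y_j} u$ boundary contribution: among all boundary quantities produced by the multiplier it carries the highest effective order (two spatial derivatives in parabolic scaling), so one cannot hope to control it directly by the trace estimates of Proposition \ref{neumann_trace} but must instead rely on the density/duality argument of Proposition \ref{stupid_boundary}. The remaining steps---absorption of lower-order coefficients, bounding the singular boundary terms in terms of $\mc{N}_\sigma u$, and propagating the bulk control to $t = 0$ via the parabolic energy estimate---are more routine given the careful well-posedness theory built up in Section \ref{S.obs}.
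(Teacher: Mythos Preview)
Your proposal is correct and follows essentially the same approach as the paper: apply Theorem \ref{T.preciseC} with $p=\tfrac12+\kappa$, control the boundary terms via Proposition \ref{neumann_trace} and \eqref{hidden_dirichlet}, kill the $\partial_t u\,D_{y_j}u$ contribution by Proposition \ref{stupid_boundary}, absorb the lower-order terms into the bulk for large $\lambda$, and then pass from the bulk $H^1$-control to $\|u(0)\|_{H^1}$ via the energy inequality \eqref{energy_strict}. The only cosmetic differences are that the paper keeps the full time interval with the weight $e^{-c\lambda\theta(t)}$ (rather than restricting to $[T/4,3T/4]$) and reads off unique continuation directly from the vanishing of the Carleman bulk (rather than re-applying the observability estimate on $[t_0,T]$); both variants are standard and equivalent.
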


\begin{proof}
Applying the global Carleman estimate of Theorem \ref{T.preciseC}, with $\sigma := \kappa ( 1 - \kappa )$ from Problem (O) and with $p := \kappa + \frac{1}{2}$, so that
\[
p \in \big( 0, \tfrac{1}{2} \big) \text{,} \qquad p^2 - 2 p + \tfrac{3}{4} = \sigma \text{,}
\]
we see that there exists a boundary defining pair $( y_1, y_2 )$ (again, the values of the associated constants are not important) such that for sufficiently large $\lambda > 0$,
\begin{align}
\label{obs_ineq_1} &\sum_{j=1}^2 \int_{ (0,T) \times \Om } \la \te e^{-2 \la F_j} \big( |\nab u|^2 + y_j^{-2} u^2 \big) \\
\notag &\quad \leq \limsup_{ \delta \searrow 0 } \sum_{ j = 1 }^2 \int_{ (0,T) \times \{ y_j = \delta \} } \la^3 \te^3 e^{-2 \lambda F_j} \big[ y_j^{2\ka} \, ( D_{ y_j } u )^2 + y_j^{-2+2\ka} \, u^2 \big] \\
\notag &\quad\qquad + \limsup_{ \delta \searrow 0 } \sum_{ j = 1 }^2 \bigg| \int_{ (0,T) \times \{ y_j = \delta \} } e^{-2 \lambda F_j} \, \partial_t u D_{ y_j } u \bigg| \\
\notag &\quad\qquad + \sum_{j=1}^2 \int_{ (0,T) \times \Om } e^{-2 \la F_j} ( \partial_t + \Delta_{ \sigma, y_j } u )^2 \text{.}
\end{align}
In the above, $( \theta, F_1, F_2 )$ are defined from $( y_1, y_2 )$ via \eqref{weightmu}, and the left-hand side of \eqref{intC} was further simplified by recalling that $\ka < 0$.
(While \eqref{intC} holds for classical regular solutions, this can be extended to strict solutions via approximation.)

For the first boundary term in \eqref{obs_ineq_1}, we apply Proposition \ref{neumann_trace} to obtain
\begin{align}
\label{obs_ineq_2} &\limsup_{ \delta \searrow 0 } \sum_{ j = 1 }^2 \int_{ (0,T) \times \{ y_j = \delta \} } \la^3 \te^3 e^{-2 \lambda F_j} \big[ y_j^{2\ka} \, ( D_{ y_j } u )^2 + y_j^{-2+2\ka} \, u^2 \big] \\
\notag &\quad \lesssim \limsup_{ \delta \searrow 0 } \sum_{ j = 1 }^2 \int_{ (0,T) \times \{ y_j = \delta \} } \la^3 \te^3 e^{-2 \lambda F_j} \big[ | y_j^{2\ka} D_{ y_j } ( y_j^{-\ka} u ) |^2 + y_j^{-2+2\ka} \, u^2 \big] \\
\notag &\quad \lesssim \int_{ (0,T) \times \Ga } ( \mc{N}_\sigma u )^2 \text{,}
\end{align}
where we also noted in the last step that $\la^3 \te^3 e^{-2\la F_j}$ is bounded.
Moreover, from Proposition \ref{stupid_boundary}, the remaining boundary term in \eqref{obs_ineq_1} vanishes.

Next, from Definition \ref{lower_order_def} and the definition of $y$, we have
\begin{align}
\label{obs_ineq_3} &\sum_{j=1}^2 \int_{ (0,T) \times \Om } e^{-2 \la F_j} ( \partial_t + \Delta_{ \sigma, y_j } u )^2 \\
\notag &\qquad \lesssim \sum_{j=1}^2 \int_{ (0,T) \times \Omega } e^{ -2 \lambda F_j } \big( | \nabla u |^2 + y_j^{-2} u^2 \big) \text{.}
\end{align}
Thus, combining \eqref{obs_ineq_1}--\eqref{obs_ineq_3} and taking $\lambda$ sufficiently large yields
\begin{equation}
\label{obs_ineq_10} \sum_{j=1}^2 \int_{ (0,T) \times \Om } e^{-2 \la F_j} \big( |\nab u|^2 + y_j^{-2} u^2 \big) \lesssim \int_{ (0,T) \times \Ga } ( \mc{N}_\sigma u )^2 \text{.}
\end{equation}
Note \eqref{obs_ineq_10} implies unique continuation---if $\mc{N}_\sigma u \equiv 0$, then $u \equiv 0$ on $[ 0, T ] \times \Omega$.

Finally, applying the Hardy inequality \eqref{hardy_main} to \eqref{obs_ineq_10}, we have
\[
\int_0^T e^{ -c \lambda \theta (t) } \| u (t) \|_{ H^1 ( \Omega ) } \, dt \lesssim \int_{ (0,T) \times \Ga } ( \mc{N}_\sigma u )^2 \text{,} \qquad c > 0 \text{.}
\]
Applying the inequality \eqref{energy_strict} on each interval $( 0, t )$ (with $F \equiv 0$) in the left-hand side of the above, we can estimate the $H^1$-norm of $u (t)$ from below by the $H^1$-norm of $u (0)$.
Since $e^{-c \lambda \theta}$ is clearly integrable on $( 0, T )$, then \eqref{obs_ineq} follows.
\end{proof}

\section{Boundary Controllability} \label{S.control}

In this section, we apply the Neumann regularity (Proposition \ref{neumann_trace}) and boundary observability (Theorem \ref{obs}) for the backward heat equation to prove our main boundary controllability result for the forward heat equation.
In particular, here we are primarily concerned with the setting of Problem (C):
\begin{align*}
( -\partial_t + \Delta_\sigma + Y \cdot \nabla + W ) v = 0 &\quad \text{on $( 0, T ) \times \Omega$,} \\
v ( 0 ) = v_0 &\quad \text{on $\Omega$,} \\
\mc{D}_\sigma v = f &\quad \text{on $( 0, T ) \times \Gamma$.}
\end{align*}
As usual, we adopt the same setting as described in previous sections.

\subsection{Regular Solutions}

The first step is to briefly discuss how solutions of Problem (C) with nonzero Dirichlet data are constructed for sufficiently regular data.

\begin{proposition} \label{reg_dir}
Given $v_0 \in H^1_0 ( \Om )$ and $f \in C^\infty_0 ( ( 0, T ) \times \Ga )$, there exists
\[
v \in C^0 ( [ 0, T ]; H^1_{\text{loc}} ( \Om ) ) \cap H^1 ( ( 0, T ) \times L^2 ( \Om ) ) \cap L^2 ( ( 0, T ); H^2_{\text{loc}} ( \Om ) )
\]
that solves Problem (C) in the following sense:
\begin{itemize}
\item $( -\partial_t + \Delta_\sigma + Y \cdot \nabla + W ) v = 0$ almost everywhere on $( 0, T ) \times \Omega$.

\item $v ( 0 ) = v_0$ holds as an equality in $H^1_{ \text{loc} } ( \Omega )$.

\item $\mc{D}_\sigma v = f$ holds in the trace sense in $C^0 ( [ 0, T ]; L^2 ( \Gamma ) )$.
\end{itemize}
Moreover, if $u_T \in H^1_0 ( \Om )$ and $F \in L^2 ( ( 0, T ) \times \Om )$, and if $u$ is the corresponding strict solution of Problem (OI), with lower-order coefficients given by
\begin{equation}
\label{coeff_duality} ( X, V ) := ( -Y, \, W - \nabla \cdot Y ) \in \mc{Z} \text{,}
\end{equation}
then the following identity holds:
\begin{equation}
\label{duality} \int_{ ( 0, T ) \times \Omega } F v = \int_\Omega u_T \, v (T) - \int_\Omega u (0) \, v_0 + \int_{ ( 0, T ) \times \Gamma } \mc{N}_\sigma u \, f \text{.}
\end{equation}
\end{proposition}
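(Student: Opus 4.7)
The plan has two stages: construct $v$ by first lifting the boundary datum $f$ into the interior with an ansatz that cancels the worst singular contributions, then solving a zero-Dirichlet problem for the remainder via the strict-solution theory of Section~\ref{S.obs}; and afterwards derive \eqref{duality} by pairing $v$ against $u$ and integrating by parts in spacetime. The choice $(X,V)=(-Y,\,W-\nabla\cdot Y)$ is precisely the $L^2$-adjoint pair for $-\partial_t + \Delta_\sigma + Y\cdot\nabla + W$, so after integration by parts the bulk terms collapse into $\int vF$, leaving only a surface contribution involving $\mc{N}_\sigma u$ and $f$.

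For the lift, I would extend $f$ to a smooth function $a_0(t,x)$ in a tubular neighborhood of $\Gamma$ that is constant along integral curves of $\nabla d_\Gamma$ (so $D_{d_\Gamma}a_0 \equiv 0$ near $\Gamma$), and set
\[
\phi(t,x) := \chi(d_\Gamma(x))\bigl[d_\Gamma(x)^\kappa\,a_0(t,x) + d_\Gamma(x)^{\kappa+1}\,a_1(t,x)\bigr]\,,
\]
with $\chi$ a cutoff supported in $\{d_\Gamma<d_0\}$ and $a_1 := -\tfrac{1}{2}a_0(\Delta d_\Gamma + Y\cdot\nabla d_\Gamma)$ selected by a single Frobenius step. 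Then $\mc{D}_\sigma\phi = f$ by construction, and $\phi(0)\equiv 0$ since $f$ vanishes near $t=0$. Direct computation shows that the defining identity $\sigma = \kappa(1-\kappa)$ annihilates the would-be $O(d_\Gamma^{\kappa-2})$ part of $(-\partial_t + \Delta_\sigma + Y\cdot\nabla + W)\phi$ automatically, while the remaining $O(d_\Gamma^{\kappa-1})$ piece—generated by $\kappa a_0(\Delta d_\Gamma + Y\cdot\nabla d_\Gamma)$ from both $\Delta_\sigma\phi$ and $Y\cdot\nabla\phi$—is killed by the choice of $a_1$, leaving an $O(d_\Gamma^\kappa)$ residual. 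This residual $F_\phi$ lies in $L^2((0,T)\times\Omega)$ precisely because $\sigma > -\tfrac{3}{4}$, i.e., $\kappa > -\tfrac{1}{2}$. The remainder $w := v - \phi$ must then solve the forward heat equation with $L^2$ forcing $-F_\phi$, initial data $v_0 \in H^1_0(\Omega)$, and boundary condition $w = 0$ on $\Gamma$ (equivalent to $\mc{D}_\sigma w = 0$ since $\kappa<0$). The forward version of Proposition~\ref{wp_strict} via Remark~\ref{time_reverse} produces a unique strict $w$, and $v := w + \phi$ inherits the claimed local regularity, since $\phi$ is smooth on every compact subset of the interior of $\Omega$.

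For \eqref{duality}, I would multiply the equation for $v$ by $u$ and integrate over $(0,T)\times\Omega$. Temporal integration by parts contributes $-\int_\Omega u_T\,v(T) + \int_\Omega u(0)\,v_0 + \int v\,\partial_t u$; the drift term yields $-\int v\,Y\cdot\nabla u - \int uv\,\nabla\cdot Y$ plus a $\Gamma$-integral of $uv\,(Y\cdot\nu)$ that vanishes because $uv \sim d_\Gamma^{1-\kappa}\cdot d_\Gamma^\kappa = d_\Gamma$ near $\Gamma$; and the $W$-contribution is symmetric. The delicate piece is the spatial pairing against $\Delta_\sigma$, which I would handle by first integrating on $\{y>\delta\}$ (with $y$ a boundary defining function from Section~\ref{S.Carleman}), rewriting $\Delta_\sigma$ near $\Gamma$ in the twisted form $y^{-\kappa}\nabla\cdot(y^{2\kappa}\nabla(y^{-\kappa}\,\cdot\,))$ modulo a symmetric zeroth-order term that cancels in the difference, and then sending $\delta\searrow 0$ using the $L^2(\Gamma)$-limits $y^{-\kappa}v \to f$ and $d_\Gamma^{-1+\kappa}u \to (1-2\kappa)^{-1}\mc{N}_\sigma u$ supplied by Proposition~\ref{neumann_trace}. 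Recognizing the resulting bulk integrand $v\bigl[\partial_t u + \Delta_\sigma u - Y\cdot\nabla u + (W-\nabla\cdot Y)u\bigr] = vF$ and collecting all contributions yields \eqref{duality}. The principal obstacle is this final boundary limit: on $\{y=\delta\}$ the integrand is a product of factors that individually diverge or vanish in $\delta$ at compensating rates, so isolating the surviving surface contribution $\int_{(0,T)\times\Gamma} \mc{N}_\sigma u\,f$ requires careful bookkeeping of the Frobenius asymptotics of the nonvanishing Dirichlet branch of $v$ and the pure Neumann branch of $u$, together with the quantitative convergence statements of Proposition~\ref{neumann_trace}.
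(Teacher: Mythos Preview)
Your proposal is correct and follows essentially the same approach as the paper. Both construct the solution by a two-term Frobenius lift of $f$ (the paper writes $v_f := y^\kappa f_\Gamma - \tfrac{1}{2\kappa}y^{1+\kappa}(\kappa Y\cdot\nabla y + yW_y)f_\Gamma$, which, once $W_y$ is unpacked, agrees with your $a_1$ up to harmless $O(y^\kappa)$ terms), then solve for the remainder via the time-reversed strict-solution theory of Proposition~\ref{wp_strict}; and both derive \eqref{duality} by integrating by parts in spacetime through the twisted form $y^{-\kappa}\nabla\cdot(y^{2\kappa}\nabla(y^{-\kappa}\,\cdot\,))$, using Proposition~\ref{neumann_trace} to identify the surviving boundary contribution---the only cosmetic difference being that the paper starts from $u$'s equation multiplied by $v$ rather than the reverse.
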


\begin{proof}[Proof sketch.]
For convenience, we adopt the shorthand
\begin{equation}
\label{B_sigma} B_\sigma := \Delta_\sigma + Y \cdot \nabla + W \text{.}
\end{equation}
First, we construct a suitable extension $v_f: ( 0, T ) \times \Omega \rightarrow \R$ of $f$:
\begin{itemize}
\item We extend $f$ to a sufficiently small neighborhood $( 0, T ) \times U_\Ga$ of $( 0, T ) \times \Ga$ by defining it to be constant along the integral curves of $\nabla y$ at each time.
Calling this extension $f_\Ga$, we then define, on $( 0, T ) \times U_\Ga$, the function
\begin{equation}
\label{vf} v_f := y^\ka f_\Ga - \tfrac{1}{ 2 \ka } y^{1+\ka} ( \ka Y \cdot \nabla y + y W_y ) f_\Ga \in C^2 ( ( 0, T ) \times U_\Ga ) \text{.}
\end{equation}

\item $v_f$ is then extended arbitrarily to all of $( 0, T ) \times \Om$ as a $C^2$-function.
\end{itemize}
Furthermore, observe that since $f \in C^\infty_0 ( ( 0, T ) \times \Ga )$, we can also arrange such that $v_f$ smoothly extends to $t = 0$ by the condition $v_f |_{ t = 0 } \equiv 0$.

The key observation is that $( -\partial_t + B_\sigma ) v_f$ lies in $L^2 ( ( 0, T ) \times \Om )$.
To confirm this, we need only check that \eqref{vf} has this property on $( 0, T ) \times U_\Ga$, on which we can assume $y = d_\Ga$.
For the first term on the right-hand side of \eqref{vf}, we have
\begin{align*}
( -\partial_t + B_\sigma ) ( y^\ka f_\Ga ) &= y^{-\ka} \nabla \cdot ( y^{2\ka} \nabla f_\Ga ) + Y \cdot \nabla ( y^\ka f_\Ga ) + W_y \, y^\ka f_\Ga + O ( y^\ka ) \\
&= 2 \kappa y^{\ka-1} \nabla y \cdot \nabla f_\Ga + y^{\ka-1} ( \ka Y \cdot \nabla y + y W_y ) f_\Ga + O ( y^\ka ) \text{,}
\end{align*}
since $f_\Ga$ and its derivatives are bounded up to $\Ga$ by definition.
As $\nabla y \cdot \nabla f_\Ga$ vanishes (again by the definition of $f_\Ga$), we hence obtain
\begin{equation}
\label{duality_1} ( -\partial_t + B_\sigma ) ( y^\ka f_\Ga ) = y^{\ka-1} ( \ka Y \cdot \nabla y + y W_y ) f_\Ga + O ( y^\ka ) \text{.}
\end{equation}
In addition, since $Y$ and $y W_y$ are $C^2$ at $\Ga$ (by Definition \ref{lower_order_def}), we have
\begin{align}
\label{duality_2} &( -\partial_t + B_\sigma ) \big[ - \tfrac{1}{ 2 \ka } y^{1+\ka} ( \ka Y \cdot \nabla y + y W_y ) f_\Ga \big] \\
\notag \quad &= - \tfrac{1}{ 2 \ka } [ y^{-\ka} \nabla \cdot ( y^{2\ka} \nabla y ) ] \, ( \ka Y \cdot \nabla y + y W_y ) f_\Ga + O ( y^\ka ) \\
\notag \quad &= - y^{\ka-1} ( \ka Y \cdot \nabla y + y W_y ) f_\Ga + O ( y^\ka ) \text{.}
\end{align}
Summing \eqref{duality_1} and \eqref{duality_2}, and recalling that $\ka \in ( -\frac{1}{2}, 0 )$, we conclude that
\begin{equation}
\label{vf_L2} ( -\partial_t + B_\sigma ) v_f = O ( y^\ka ) \in L^2 ( ( 0, T ) \times \Om ) \text{.}
\end{equation}

Next, we define $v_h$ as the strict solution to the following problem:
\begin{align}
\label{vh} ( -\partial_t + B_\sigma ) v_h = - ( -\partial_t + B_\sigma ) v_f &\quad \text{on $( 0, T ) \times \Omega$,} \\
\notag v_h ( T ) = v_0 &\quad \text{on $\Omega$,} \\
\notag v_h = 0 &\quad \text{on $( 0, T ) \times \Gamma$.}
\end{align}
Note that the existence of $v_h$ follows from Proposition \ref{wp_strict} (adapted to the forward heat equation---see Remark \ref{time_reverse}) along with \eqref{vf_L2}.
Finally, observe that 
\begin{equation}
\label{v_dir} v := v_h + v_f \text{,}
\end{equation}
which lies in the required space, suffices as our desired solution to Problem (C).

Lastly, given $u_T$, $F$, and $u$ as in the hypotheses, we write
\begin{align}
\label{duality_10} \int_{ ( 0, T ) \times \Omega } F v &= \int_{ ( 0, T ) \times \Omega } ( \partial_t u + y^{-\ka} \nabla \cdot [ y^{2\ka} \nabla ( y^{-\ka} u ) ] + X \cdot \nabla u + V_y u ) v \\
\notag &= I_1 + I_2 + I_3 + I_4 \text{,}
\end{align}
and we integrate each term on the right-hand side of \eqref{duality_10} by parts.
First,
\begin{align}
\label{duality_11} I_1 &= \int_{ ( 0, T ) \times \Omega } u ( -\partial_t v ) + \int_\Omega u (T) v (T) - \int_\Omega u (0) v (0) \\
\notag &= \int_{ ( 0, T ) \times \Omega } u ( -\partial_t v ) + \int_\Omega u_T v (T) - \int_\Omega u (0) v_0 \text{.}
\end{align}
To see that the right-hand side of \eqref{duality_11} is well-defined, we consider, for instance,
\[
\int_\Omega u_T v (T) = \int_\Omega u_T \, v_h (T) + \int_\Omega u_T \, v_f (T) \text{.}
\]
Note that both terms on the right-hand side are finite, since $u_T, v_h, v_f \in L^2 ( \Omega )$ by Proposition \ref{wp_strict}, \eqref{vf}, and the assumption $\ka > -\frac{1}{2}$; the remaining term involving $u (0) \, v_0$ is treated similarly.
Next, observe from \eqref{Vy} and \eqref{coeff_duality} that
\begin{align}
\label{duality_12} I_3 + I_4 &= \int_{ ( 0, T ) \times \Omega } u ( Y \cdot \nabla v + W_y v ) - \int_{ ( 0, T ) \times \Gamma } ( Y \cdot \nabla y ) \, \Big[ \lim_{ y \searrow 0 } y^\ka u \Big] \mc{D}_\sigma v \\
\notag &= \int_{ ( 0, T ) \times \Omega } u ( Y \cdot \nabla v + W_y v ) \text{,}
\end{align}
where the boundary term in \eqref{duality_12} vanishes due to \eqref{hidden_dirichlet}.

For the remaining term $I_2$, we first obtain
\begin{align*}
I_2 &= -\int_{ ( 0, T ) \times \Omega } y^\ka \nabla ( y^{-\ka} u ) \cdot y^\ka \nabla ( y^{-\ka} v ) + \int_{ ( 0, T ) \times \Gamma } \mc{N}_\sigma u \mc{D}_\sigma v \\
&= -\int_{ ( 0, T ) \times \Omega } y^\ka \nabla ( y^{-\ka} u ) \cdot y^\ka \nabla ( y^{-\ka} v ) + \int_{ ( 0, T ) \times \Gamma } \mc{N}_\sigma u \, f \text{,}
\end{align*}
where we note that $\mc{N}_\sigma u$ is well-defined by Proposition \ref{neumann_trace}, and where we also note that $y^\ka \nabla ( y^{-\ka} u )$, $y^\ka \nabla ( y^{-\ka} v_h )$, and $y^\ka \nabla ( y^{-\ka} v_f )$ all lie in $C ( [ 0, T ]; L^2 ( \Omega ) )$, by Corollary \ref{prop_hardy_norm}, Proposition \ref{wp_strict}, and \eqref{vf}.
Integrating by parts again then yields
\begin{equation}
\label{duality_13} I_2 = \int_{ ( 0, T ) \times \Omega } u \, \nabla \cdot [ y^{2\ka} \nabla ( y^{-\ka} v ) ] + \int_{ ( 0, T ) \times \Gamma } \mc{N}_\sigma u \, f - \int_{ ( 0, T ) \times \Gamma } \mc{D}_\sigma u \mc{N}_\sigma v \text{.}
\end{equation}
We claim the last term in the right-hand side of \eqref{duality_13} vanishes.
However, treating this (informally written) term properly requires additional comments:
\begin{itemize}
\item For the Neumann trace ``$\mc{N}_\sigma v$", we first notice from Proposition \ref{neumann_trace} that $\mc{N}_\sigma v_h$ is well-defined, with a finite value on $( 0, T ) \times \Ga$.
Also, from \eqref{vf}, we see directly that $\mc{N}_\sigma v_f$ (or, more accurately, $y^{2\ka} D_y ( y^{-\ka} v_f )$ in the limit $y \rightarrow 0$) blows up like $O ( y^{2\ka} )$ at $( 0, T ) \times \Ga$.

\item By the second part of Proposition \ref{neumann_trace}, the Dirichlet trace $\mc{D}_\sigma u$ exists and vanishes to order $O ( y^{1-\ka} )$ at $( 0, T ) \times \Ga$.
\end{itemize}
Thus, the informally stated product ``$\mc{D}_\sigma u \mc{N}_\sigma v$" vanishes at $( 0, T ) \times \Ga$ like $O ( y^{1+\ka} )$, which is a positive power of $y$ since $\ka > -\frac{1}{2}$.

Combining \eqref{Vy} and \eqref{duality_10}--\eqref{duality_13} then yields
\begin{align*}
\int_{ ( 0, T ) \times \Omega } F v &= \int_{ ( 0, T ) \times \Omega } u ( -\partial_t v + B_\sigma v ) + \int_{ ( 0, T ) \times \Gamma } \mc{N}_\sigma u \, f \\
\notag &\qquad + \int_\Omega u_T v (T) - \int_\Omega u (0) v_0 \text{,}
\end{align*}
and the desired identity \eqref{duality} follows from the equation satisfied by $v$.
\end{proof}

\begin{remark}
In proving Proposition \ref{reg_dir}, the extension $v_f := d_\Ga^\ka f_\Ga$ may have seemed natural at first glance.
However, this $v_f$ runs into issues, since $( - \partial_t + B_\sigma ) v_f$ fails to lie in $L^2 ( ( 0, T ) \times \Om )$.
As a result, one requires the extra correction term in \eqref{vf} to ensure $v_f$ is sufficiently well-behaved near the boundary.
In fact, this correction term also motivates the boundary conditions imposed in Definition \ref{lower_order_def}.
\end{remark}

\subsection{Weak Solutions}

The next task is to derive, using the identity \eqref{duality}, a well-posedness theory for Problem (C) that is dual to that of Problem (OI).

\begin{definition} \label{transposition}
Given $v_0 \in H^{-1} ( \Omega )$ and $f \in L^2 ( (0, T) \times \Ga )$, we call
\[
v \in C^0 ( [ 0, T ]; H^{-1} ( \Om ) ) \cap L^2 ( ( 0, T ) \times \Omega )
\]
a \emph{weak} (or \emph{transposition}) \emph{solution} of Problem (C) iff for any $F \in L^2 ( ( 0, T ) \times \Om )$,
\begin{equation}
\label{weak_soln} \int_{ ( 0, T ) \times \Omega } F v = -\int_\Omega u (0) \, v_0 + \int_{ ( 0, T ) \times \Gamma } \mc{N}_\sigma u \, f \text{,}
\end{equation}
where $u$ is the strict solution to Problem (OI) with the above $F$, with $u_T \equiv 0$, and with lower-order coefficients $X$ and $V$ given by \eqref{coeff_duality}.
\end{definition}

\begin{proposition} \label{wp_weak}
Given $v_0 \in H^{-1} ( \Omega )$ and $f \in L^2 ( (0, T) \times \Ga )$, there exists a unique weak solution $v$ of Problem (C).
In addition, $v$ satisfies
\begin{equation}
\label{energy_weak} \| v \|_{ L^\infty ( [ 0, T ]; H^{-1} ( \Omega ) ) }^2 + \| v \|_{ L^2 ( ( 0, T ) \times \Omega ) }^2 \lesssim \| v_0 \|_{ H^{-1} ( \Omega ) }^2 + \| f \|_{ L^2 ( ( 0, T ) \times \Gamma ) }^2 \text{,}
\end{equation}
where the constant depends on $\Om$, $\sigma$, $Y$, $W$.
\end{proposition}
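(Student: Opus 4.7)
The plan is to construct $v$ by transposition duality with the backward problem (OI), first approximating the data and using Proposition~\ref{reg_dir} to obtain regular solutions, then passing to the limit.

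First I would select sequences $v_0^k \in H^1_0(\Om)$ and $f^k \in C^\infty_0((0,T) \times \Ga)$ with $v_0^k \to v_0$ in $H^{-1}(\Om)$ and $f^k \to f$ in $L^2((0,T) \times \Ga)$. Proposition~\ref{reg_dir} supplies a regular solution $v^k$ to Problem (C) for each pair $(v_0^k, f^k)$, and the regularity asserted there places $v^k$ in $C^0([0,T]; L^2(\Om)) \subset C^0([0,T]; H^{-1}(\Om))$.

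The next step is to derive uniform a priori bounds via the duality identity \eqref{duality}. For any $F \in L^2((0,T) \times \Om)$, let $u$ be the strict solution to Problem (OI) with forcing $F$, zero final data, and dual coefficients \eqref{coeff_duality}. By \eqref{energy_strict} and \eqref{hidden_reg},
\[
\| u(0) \|_{H^1(\Om)} + \| \mc{N}_\si u \|_{L^2((0,T) \times \Ga)} \lesssim \| F \|_{L^2((0,T) \times \Om)} \text{.}
\]
Applying \eqref{duality} with $u_T = 0$ then gives
\[
\bigg| \int_{(0,T) \times \Om} F v^k \bigg| \lesssim \big( \| v_0^k \|_{H^{-1}(\Om)} + \| f^k \|_{L^2((0,T) \times \Ga)} \big) \| F \|_{L^2((0,T) \times \Om)} \text{,}
\]
which controls $\| v^k \|_{L^2((0,T) \times \Om)}$ uniformly in $k$. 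An analogous pointwise estimate follows by running \eqref{duality} on the subinterval $(0,t)$ with $F = 0$ and $u_T = \phi \in H^1_0(\Om)$, yielding $\| v^k(t) \|_{H^{-1}(\Om)} \lesssim \| v_0^k \|_{H^{-1}(\Om)} + \| f^k \|_{L^2((0,T) \times \Ga)}$ with a constant uniform in $t \in [0,T]$.

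By linearity these estimates apply to differences $v^k - v^l$ with data $(v_0^k - v_0^l, f^k - f^l)$, so $(v^k)$ is Cauchy in $L^2((0,T) \times \Om) \cap L^\infty([0,T]; H^{-1}(\Om))$ and converges to some $v$ in this space, with \eqref{energy_weak} passing to the limit. Continuity of $v$ in $H^{-1}(\Om)$ is inherited because the $v^k$ already lie in $C^0([0,T]; H^{-1}(\Om))$ and the convergence is uniform in $t$. Passing to the limit in \eqref{duality} with $u_T = 0$ yields the transposition identity \eqref{weak_soln}. Uniqueness is immediate: any two weak solutions differ by a $w$ satisfying $\int F w = 0$ for every $F \in L^2((0,T) \times \Om)$, hence $w \equiv 0$ in $L^2$ and, by continuity, as an element of $C^0([0,T]; H^{-1}(\Om))$.

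The main technical obstacle will be the pointwise $H^{-1}$ estimate: it requires running Proposition~\ref{wp_strict} and Proposition~\ref{neumann_trace} on every subinterval $(0,t)$, with constants uniform in $t \in (0,T]$. The underlying constants come from the semigroup bound \eqref{semigroup} and from integrations over $(0,t)$ inside the proof of Proposition~\ref{neumann_trace}, both of which are monotone in $t$ and so cause no fundamental difficulty, but tracking them carefully, together with the limit $t \searrow 0$ needed for continuity up to the initial time, is the delicate bookkeeping of the argument.
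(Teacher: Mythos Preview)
Your argument is correct and rests on the same duality estimates as the paper's proof---namely, applying \eqref{duality} with $u_T = 0$ and arbitrary $F$ to control $\|v\|_{L^2((0,T)\times\Om)}$, and with $F = 0$ on subintervals $(0,t)$ to control $\|v(t)\|_{H^{-1}(\Om)}$, both via \eqref{energy_strict} and \eqref{hidden_reg}. The organizational difference is that you approximate the data from the outset, build regular solutions $v^k$ via Proposition~\ref{reg_dir}, and pass to the limit through a Cauchy-sequence argument, whereas the paper first invokes the Riesz representation theorem on the bounded linear functional $F \mapsto -\int_\Om u(0)\,v_0 + \int_{(0,T)\times\Ga} \mc{N}_\sigma u\, f$ to obtain existence and uniqueness of $v$ in $L^2((0,T)\times\Om)$ directly, and only afterwards appeals to approximation and Proposition~\ref{reg_dir} for the pointwise $H^{-1}$ bound. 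The Riesz route is slightly cleaner (existence, uniqueness, and the $L^2$ bound come in one stroke), while your approximation-first route is more constructive and handles the $C^0([0,T];H^{-1}(\Om))$ regularity uniformly rather than as a separate step; neither approach avoids the bookkeeping you flag about uniformity in $t$ of the constants in \eqref{energy_strict} and \eqref{hidden_reg}.
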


\begin{proof}[Proof sketch.]
Define the linear functional $S: L^2 ( ( 0, T ) \times \Omega ) \rightarrow \R$ by
\[
S F := -\int_\Omega u (0) \, v_0 + \int_{ [ 0, T ] \times \Gamma } \mc{N}_\sigma u \, f \text{,}
\]
where $u$ is the strict solution to Problem (OI) with the above $F$, with $u_T \equiv 0$, and with $X$ and $V$ given by \eqref{coeff_duality}.
Observe that $S$ is bounded, since
\begin{align}
\label{weak_bound_0} | S F |^2 &\lesssim \| u (0) \|_{ H^1 ( \Omega ) }^2 \| v_0 \|_{ H^{-1} ( \Omega ) }^2 + \| \mc{N}_\sigma u \|_{ L^2 ( ( 0, T ) \times \Gamma ) }^2 \| f \|_{ L^2 ( ( 0, T ) \times \Gamma ) }^2 \\
\notag &\lesssim ( \| v_0 \|_{ H^{-1} ( \Omega ) }^2 + \| f \|_{ L^2 ( ( 0, T ) \times \Gamma ) }^2 ) \| F \|_{ L^2 ( ( 0, T ) \times \Omega ) }^2 \text{,}
\end{align}
where in the last step, we applied \eqref{energy_strict} and \eqref{hidden_reg}.
By the Riesz representation theorem, there exists a unique $v \in L^2 ( ( 0, T ) \times \Omega )$ such that
\[
\int_{ ( 0, T ) \times \Omega } F v = S F \text{.}
\]
In particular, $v$ satisfies the desired identity \eqref{weak_soln}.

In addition, the representation theorem and \eqref{weak_bound_0} also imply the estimate
\[
\| v \|_{ L^2 ( ( 0, T ) \times \Omega ) }^2 \lesssim \| v_0 \|_{ H^{-1} ( \Omega ) }^2 + \| f \|_{ L^2 ( ( 0, T ) \times \Gamma ) }^2 \text{,}
\]
hence it remains only to obtain the $C^0 ( [ 0, T ]; H^{-1} ( \Omega ) )$-estimate for $v$.
For this, we fix any $\tau \in ( 0, T ]$ and $u_\tau \in H^1_0 ( \Omega )$, and we let $u$ be the strict solution of
\begin{align}
\label{heat_obs_tau} ( \partial_t + \Delta_\sigma u + X \cdot \nabla + V ) u = 0 &\quad \text{on $( 0, \tau ) \times \Omega$,} \\
\notag u ( \tau ) = u_\tau &\quad \text{on $\Omega$,} \\
\notag u = 0 &\quad \text{on $( 0, \tau ) \times \Gamma$.}
\end{align}
For sufficiently regular $v_0$ and $f$, uniqueness yields that $v$ must be equal to that of Proposition \ref{reg_dir}.
As a result, the identity \eqref{duality} yields
\[
\int_\Omega u_\tau \, v ( \tau ) = \int_\Omega u (0) \, v_0 - \int_{ ( 0, T ) \times \Gamma } \mc{N}_\sigma u \, f \text{,}
\]
The estimate \eqref{energy_strict} then implies
\begin{align*}
\left| \int_\Omega u_\tau \, v ( \tau ) \right| &\leq \| u (0) \|_{ H^1 ( \Omega ) } \| v_0 \|_{ H^{-1} ( \Omega ) } + \| \mc{N}_\sigma u \|_{ L^2 ( ( 0, T ) \times \Gamma ) } \| f \|_{ L^2 ( ( 0, T ) \times \Gamma ) } \\
&\lesssim \| u_\tau \|_{ H^1 ( \Omega ) } [ \| v_0 \|_{ H^{-1} ( \Omega ) } + \| f \|_{ L^2 ( ( 0, T ) \times \Gamma ) } ] \text{,}
\end{align*}
and the desired $C^0 ( [ 0, T ]; H^{-1} ( \Omega ) )$-estimate for $v$ follows.
Finally, the general case $v_0 \in H^{-1} ( \Omega )$ follows via an approximation argument.
\end{proof}

\subsection{Null Controllability}

We can now turn our attention to the main null control result.
The first step is to properly characterize the desired null control:

\begin{proposition} \label{ctl_condition}
Fix any $v_0 \in H^{-1} ( \Omega )$.
Then, $f \in L^2 ( ( 0, T ) \times \Gamma )$ is a null control for Problem (C) (that is, the weak solution $v$ to Problem (C), with the above $v_0$ and $f$, satisfies $v (T) = 0$) if and only if for any $u_T \in H^1_0 ( \Omega )$,
\[
0 = \int_{ ( 0, T ) \times \Gamma } \mc{N}_\sigma u \, f - \int_\Omega u (0) \, v_0 \text{,}
\]
where $u$ is the strict solution of Problem (O), with $( X, V )$ as in \eqref{coeff_duality}.
\end{proposition}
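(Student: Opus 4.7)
The plan is to upgrade the duality identity~\eqref{duality} from the regular setting of Proposition~\ref{reg_dir} to the weak setting of Proposition~\ref{wp_weak}, and then test against all $u_T \in H^1_0(\Omega)$ to characterize $v(T)$.

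Fix an arbitrary $u_T \in H^1_0(\Omega)$, and let $u$ be the strict solution of Problem~(O) with this final data and with $(X,V)$ given by~\eqref{coeff_duality}. The target identity is
\[
\langle v(T), u_T \rangle_{H^{-1}, H^1_0} = \int_\Omega u(0)\, v_0 - \int_{(0,T)\times\Gamma} \mc{N}_\sigma u \, f \text{,}
\]
valid for any weak solution $v$ of Problem~(C) with data $(v_0, f) \in H^{-1}(\Omega) \times L^2((0,T)\times\Gamma)$. To prove it, I would approximate by regular data: pick $v_0^k \in H^1_0(\Omega)$ with $v_0^k \to v_0$ in $H^{-1}(\Omega)$ and $f^k \in C^\infty_0((0,T)\times\Gamma)$ with $f^k \to f$ in $L^2((0,T)\times\Gamma)$, and let $v^k$ be the regular solution from Proposition~\ref{reg_dir} associated with $(v_0^k, f^k)$. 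Since~\eqref{duality} specialized to $u_T \equiv 0$ reduces to the weak formulation~\eqref{weak_soln}, each $v^k$ also solves Problem~(C) weakly with these data, so by the uniqueness in Proposition~\ref{wp_weak}, $v^k$ coincides with the corresponding weak solution. Applying~\eqref{duality} with $F \equiv 0$ to $v^k$ yields
\[
\int_\Omega u_T\, v^k(T) = \int_\Omega u(0)\, v_0^k - \int_{(0,T)\times\Gamma} \mc{N}_\sigma u\, f^k \text{.}
\]

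Now I would pass to the limit $k \to \infty$. The right-hand side converges to $\int_\Omega u(0)\, v_0 - \int_{(0,T)\times\Gamma} \mc{N}_\sigma u\, f$ by Cauchy--Schwarz, using $u(0) \in H^1_0(\Omega)$ from Proposition~\ref{wp_strict} together with $v_0^k \to v_0$ in $H^{-1}$, and $\mc{N}_\sigma u \in L^2((0,T)\times\Gamma)$ from Proposition~\ref{neumann_trace} together with $f^k \to f$ in $L^2$. For the left-hand side, applying the energy estimate~\eqref{energy_weak} to the weak solution $v^k - v^{k'}$ (with data $(v_0^k - v_0^{k'}, f^k - f^{k'})$) shows that $(v^k)$ is Cauchy in $C^0([0,T]; H^{-1}(\Omega))$; its limit must be $v$, so $v^k(T) \to v(T)$ in $H^{-1}(\Omega)$ and hence $\int_\Omega u_T\, v^k(T) \to \langle v(T), u_T\rangle$. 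The target identity follows.

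To conclude, recall that $v(T) = 0$ in $H^{-1}(\Omega)$ if and only if $\langle v(T), u_T\rangle_{H^{-1}, H^1_0} = 0$ for every $u_T \in H^1_0(\Omega)$, by the definition of the dual norm. Combined with the identity established above, this equivalence is precisely the characterization in the statement. The main obstacle is the approximation step: it requires the simultaneous convergence of interior and boundary pairings for the sequence of regular solutions, which is exactly what the continuous-dependence estimate~\eqref{energy_weak} and the hidden regularity bound~\eqref{hidden_reg} are built to deliver.
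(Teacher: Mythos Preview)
Your proposal is correct and follows essentially the same route as the paper: apply the duality identity \eqref{duality} with $F \equiv 0$ to sufficiently regular data, then pass to general $(v_0, f) \in H^{-1}(\Omega) \times L^2((0,T)\times\Gamma)$ by approximation. You have simply spelled out the approximation argument (using uniqueness of weak solutions from Proposition~\ref{wp_weak}, the continuous-dependence estimate \eqref{energy_weak}, and Proposition~\ref{neumann_trace}) that the paper leaves implicit in one sentence.
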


\begin{proof}
For sufficiently regular $v_0$ and $f$, this follows from the identity \eqref{duality} (with $F \equiv 0$).
The general case then follows by approximation.
\end{proof}

\begin{theorem}\label{T.control}
Problem (C) is boundary null controllable---more specifically, given any $v_0 \in H^{-1} ( \Omega )$, there is a null control $f \in L^2 ( ( 0, T ) \times \Gamma )$ for Problem (C).
\end{theorem}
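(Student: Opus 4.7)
The plan is to apply the Hilbert Uniqueness Method in its variational form: the observability inequality of Theorem~\ref{obs}, combined with the duality characterization of Proposition~\ref{ctl_condition}, reduces the construction of a null control to a coercive minimization problem on a suitable auxiliary Hilbert space.

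Fixing $v_0 \in H^{-1}(\Om)$, I would first introduce the Hilbert space $\mc{X}$ defined as the completion of $H^1_0(\Om)$ with respect to the norm
\[
\|u_T\|_{\mc{X}} := \|\mc{N}_\sigma u\|_{L^2((0,T) \times \Ga)},
\]
where $u$ denotes the strict solution of Problem~(O) with final data $u_T$ and lower-order coefficients $(X, V) := (-Y, W - \nab \cdot Y) \in \mc{Z}$. Proposition~\ref{neumann_trace} ensures that $\|u_T\|_{\mc{X}}$ is finite on $H^1_0(\Om)$, while the unique continuation part of Theorem~\ref{obs} shows it is genuinely a norm. Moreover, the observability estimate \eqref{obs_ineq} gives $\|u(0)\|_{H^1(\Om)} \lesssim \|u_T\|_{\mc{X}}$, so the two linear maps $u_T \mapsto u(0)$ and $u_T \mapsto \mc{N}_\sigma u$, initially defined on $H^1_0(\Om)$, extend continuously to bounded operators on $\mc{X}$ with values in $H^1_0(\Om)$ and $L^2((0,T) \times \Ga)$ respectively.

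Next, on $\mc{X}$ I would consider the functional
\[
J(u_T) := \tfrac{1}{2} \|u_T\|_{\mc{X}}^2 + \int_\Om u(0)\, v_0,
\]
where the last term is interpreted as the $H^1_0$--$H^{-1}$ duality pairing. Then $J$ is continuous, strictly convex, and coercive on $\mc{X}$: the extended observability bound together with Young's inequality yields
\[
J(u_T) \geq \tfrac{1}{2} \|u_T\|_{\mc{X}}^2 - C \|u_T\|_{\mc{X}} \|v_0\|_{H^{-1}} \geq \tfrac{1}{4} \|u_T\|_{\mc{X}}^2 - C' \|v_0\|_{H^{-1}}^2.
\]
The direct method of the calculus of variations thus produces a unique minimizer $u_T^\ast \in \mc{X}$, which satisfies the Euler--Lagrange identity
\[
\int_{(0,T) \times \Ga} \mc{N}_\sigma u^\ast \, \mc{N}_\sigma u\, dS\, dt + \int_\Om u(0)\, v_0 = 0, \qquad \text{for all } u_T \in \mc{X}.
\]

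To conclude, I would set $f := -\mc{N}_\sigma u^\ast \in L^2((0,T) \times \Ga)$; restricting the Euler--Lagrange identity to test data $u_T \in H^1_0(\Om) \subset \mc{X}$ then yields precisely the criterion of Proposition~\ref{ctl_condition}, so that $f$ is the desired null control. The main point requiring care is the construction of the abstract space $\mc{X}$ and the density-extension of $u_T \mapsto u(0)$ and $u_T \mapsto \mc{N}_\sigma u$: since $u_T^\ast$ a priori lives only in the completion and not in $H^1_0(\Om)$, one must verify that the Euler--Lagrange identity---the sole link to Proposition~\ref{ctl_condition}---genuinely tests against arbitrary $u_T \in H^1_0(\Om)$ with the Neumann trace and the initial value $u(0)$ being those produced by the strict solution theory of Section~\ref{S.obs}.
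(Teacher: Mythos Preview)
Your proposal is correct and follows essentially the same variational HUM argument as the paper: define the completion of $H^1_0(\Omega)$ under the Neumann-trace norm, use Theorem~\ref{obs} to establish that this is a genuine norm and that the functional is coercive, minimize, and read off the control from the Euler--Lagrange equation via Proposition~\ref{ctl_condition}. The only cosmetic differences are a sign convention in the functional (your $+\int u(0)\,v_0$ versus the paper's $-\int u(0)\,v_0$, compensated by your $f=-\mc{N}_\sigma u^\ast$) and that the paper handles the passage from the abstract completion back to concrete objects by explicitly taking an approximating sequence $u_{T,j}\to u_T^\ast$ rather than invoking the extended operators as you do.
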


\begin{proof}
Consider the following seminorm on $H^1_0 ( \Omega )$,
\begin{equation}
\label{norm_N} \| u_T \|_{ \mf{N} } := \| \mc{N}_\sigma u \|_{ L^2 ( ( 0, T ) \times \Gamma ) } \text{,} \qquad u_T \in H^1_0 ( \Omega ) \text{,}
\end{equation}
where $u$ is the strict solution of Problem (O), with $u_T$ as above and with $( X, V )$ as in \eqref{coeff_duality}.
Theorem \ref{obs} implies that \eqref{norm_N} defines a norm, and we can now define $\mf{N}$ to be the Hilbert space completion of $H^1_0 ( \Omega )$ with respect to \eqref{norm_N}.

Consider now the functional $I_\sigma: H^1_0 ( \Omega ) \rightarrow \R$ given by
\begin{equation}
\label{I_sigma} I_\sigma ( u_T ) := \tfrac{1}{2} \int_{ ( 0, T ) \times \Gamma } | \mc{N}_\sigma u |^2 - \int_\Omega u (0) \, v_0 \text{,}
\end{equation}
with $u$ as before.
The observability inequality \eqref{obs_ineq} then implies $I_\sigma$ extends to a continuous functional on $\mf{N}$, and this continuity also implies the estimate
\[
I_\sigma ( u_T ) \geq c \| u_T \|_{ \mf{N} }^2 - C \| v_0 \|_{ H^{-1} ( \Omega ) }^2 \text{,} \qquad u_T \in \mf{N} \text{,}
\]
with $c, C > 0$.
In particular, $I_\sigma$ is coercive, hence $I_\sigma$ has a minimizer $u_T^\ast \in \mf{N}$.

Let $\{ u_{ T, j } \}$ be a sequence in $H^1_0 ( \Omega )$ with $u_{ T, j } \rightarrow u_T^\ast$ in $\mf{N}$, and let $\{ u_j \}$ be the corresponding solutions to Problem (O).
By \eqref{obs_ineq} and \eqref{norm_N}, there exist functions $f \in L^2 ( ( 0, T ) \times \Gamma )$ and $u_0 \in H^1_0 ( \Omega )$ such that
\[
\| f - \mc{N}_\sigma u_j \|_{ L^2 ( ( 0, T ) \times \Gamma ) } \rightarrow 0 \text{,} \qquad \| u_0 - u_j (0) \|_{ H^1_0 ( \Omega ) } \rightarrow 0 \text{.}
\]
Finally, taking the first variation of $I_\sigma$ and recalling the above limits, we therefore obtain, for any $u_T \in H^1_0 ( \Omega )$ (and with $u$ as before),
\begin{align*}
0 &= \lim_{ h \rightarrow 0 } \tfrac{1}{h} [ I_\sigma ( u_T^\ast + h u_T ) - I_\sigma ( u_T^\ast ) ] \\
&= \int_{ ( 0, T ) \times \Gamma } \mc{N}_\sigma u \, f - \int_\Omega u (0) \, v_0 \text{.}
\end{align*}
As a result, by Proposition \ref{ctl_condition}, the above $f$ is the desired null control.
\end{proof}

\section*{Acknowledgments}

This work has received funding from the European Research Council under the European Union's Horizon 2020 research and innovation programme (ERC Consolidator Grant agreement 862342, A.E.). A.E.\ is supported in part by the
ICMAT--Severo Ochoa grant CEX2019-000904-S and by RED2018-102650-T funded by MCIN/AEI/10.13039/501100011033.
A.S.\ is supported by the EPSRC grant EP/R011982/1.

\raggedbottom


\begin{thebibliography}{20}\frenchspacing
	
  \bibitem{AS}
  S. Alexakis and A. Shao,
  Global uniqueness theorems for linear and nonlinear waves,
  J. Func. Anal. (11) 269 (2015) 3458--3499.

	\bibitem{Arons}
	N. Aronszajn, A unique continuation theorem for solutions of elliptic partial differential equations or inequalities of second order, J. Math. Pures Appl. (9) 36 (1957), 235--249.
	
	\bibitem{Baras}
	P. Baras, J.A. Goldstein, The heat equation with a singular potential, Trans. Amer. Math. Soc. 284  (1984) 121--139.

  \bibitem{BK0}
  M. I. Belishev and Y. V. Kurylev,
  To the reconstruction of a Riemannian manifold via its spectral data (BC-method),
  Commun. Part. Diff. Eq. 17 (1992) 767--804.
	
	\bibitem{Biccari}
	U. Biccari, Boundary controllability for a one-dimensional heat equation with a singular inverse-square potential, Math. Control Relat. Fields 9 (2019) 191--219. 
	
	\bibitem{BZuazua}
	U. Biccari and E. Zuazua,
	Null controllability for a heat equation with a singular inverse-square potential involving the distance to the boundary function,
	J. Differential Equations 261 (2016) 2809--2853.

  \bibitem{BM}
	H. Brezis and M. Marcus,
  Hardy's inequalities revisited,
  Ann. Scu. Norm. Sup. Pisa Cl. Sci. (4) 25 (1997) 217--237. 

  \bibitem{BK}
  A. L. Bukhge\u{i}m and M. V. Klibanov,
  Uniqueness in the large of a class of multidimensional inverse problems,
  Dokl. Akad. Nauk SSSR 260 (1981) 269--272.

  \bibitem{Calderon}
  A. P. Calder\'on,
  Uniqueness in the Cauchy problem for partial differential equations,
  Amer. J. Math. 80 (1958) 16--36.
	
	\bibitem{CMV}
	P. Cannarsa, P. Martinez and J. Vancostenoble,
	Null controllability of degenerate heat equations,
	Adv. Differential Equations 10 (2005) 153--190.
	
	\bibitem{CMV2}
	P. Cannarsa, P. Martinez and J. Vancostenoble,
	Carleman estimates for a class of degenerate parabolic operators,
	SIAM J. Control Optim 47 (2008) 1--19.
	
	\bibitem{CMV3}
	P. Cannarsa, P. Martinez and J. Vancostenoble,
	Carleman estimates and null controllability for boundary-degenerate parabolic operators,
	C. R. Acad. Sci. Paris, Serie I 347 (2009) 147--152.
	
	\bibitem{CMV4}
	P. Cannarsa, P. Martinez and J. Vancostenoble,
	The cost of controlling weakly degenerate parabolic equations by boundary controls,
	Mat. Control Relat. Fields 7 (2017) 171--211.
	
	\bibitem{CMV5}
	P. Cannarsa, P. Martinez and J. Vancostenoble,
	The cost of controlling strongly degenerate parabolic equations by boundary controls,
	ESAIM: Control Optim. Calc. Var. 26 (2020) 50pp.
	
	\bibitem{Carleman}
	T. Carleman, Sur un probl\'eme d'unicit\`e pour les syst\'emes
	d\`equations aux d\'eriv\'ees partielles \`a deux variables
	ind\'ependentes, Ark. Mat., Astr. Fys. 26 (1939) 1--9.
	
	\bibitem{Cazacu}
	C. Cazacu, Controllability of the heat equation with an inverse-square potential localized on the boundary, SIAM J. Control Optim. 52 (2014) 2055--2089.
	
	\bibitem{DosSantos}
	D. Dos Santos Ferreira, Sharp $L^p$ Carleman estimates and unique
	continuation, Duke Math. J. 129 (2005) 503--550.
	
	\bibitem{DZZ}
	T. Duyckaerts, X. Zhang and E. Zuazua,
	On the optimality of the observability inequalities for parabolic and
	hyperbolic systems with potentials,
	Ann. Inst. H. Poincar\'e Anal. Non Lin\'eaire 25 (2008) 1--41.
	
	\bibitem{JEMS}
	A. Enciso, A. Shao and B. Vergara,
 Carleman estimates with sharp weights and boundary observability for wave operators with critically singular potentials,
	J. Eur. Math. Soc. 23 (2021) 3459--3495.
	
	\bibitem{Ervedoza}
	S. Ervedoza, Control and stabilization properties for a singular heat equation with an inverse-square potential,
	Comm. Partial Differential Equations 33 (2008) 1996--2019.
	
	\bibitem{EF}
	L. Escauriaza and F. J. Fern\'andez,
	Unique continuation for parabolic operators,
	Ark. Mat. 41 (2003) 35--60.

  \bibitem{Evans}
  L. C. Evans,
  \emph{Partial Differential Equations},
  AMS, 2022.
	
	\bibitem{FI}
	A. V. Fursikov and O. Yu. Imanuvilov, 
	{\em Controllability of evolution equations}, Lecture Notes Series, 34, 
	Seoul National University, Research Institute of Mathematics, Global Analysis
	Research Center, Seoul, 1996.
	
	\bibitem{Ireneo}
	J. Garc\'\i a  Azorero and  I. Peral,
  Hardy inequalities and some critical elliptic and parabolic problems,
  J. Differential Equations 144 (1998)441--476.

	\bibitem{Gueye}
	M. Gueye,
	Exact boundary controllability of 1-D parabolic and hyperbolic degenerate
	equations,
	SIAM J. Control Optim. 52 (2014) 2037--2054.
	
	\bibitem{Hormander}
	L. H\"{o}rmander,
	{\em The Analysis of Linear Partial Differential Operators IV},
	Springer-Verlag, Berlin, 1985.
	
	
	\bibitem{Klibanov}
	M. V. Klibanov
	Inverse problems and Carleman estimates,
	Inverse Problems 8 (1992) 575.
	
	\bibitem{Jena}
	V. K. Jena,
	Carleman estimate for ultrahyperbolic operators and improved interior control for wave equations,
	J. Diff. Equ. 302 (2021) 273-333.
	
	\bibitem{LL}
	C. Laurent, M. L\'eautaud,
	Quantitative unique continuation for operators with partially analytic coefficients.
	Application to approximate control for waves,
	J. Eur. Math. Soc. 21 (2019) 957-1069.
	
	\bibitem{Lions1}
	J. L. Lions,
	\emph{Controlabilit\'e exacte perturbations et stabilisation de syst\`emes
		distribu\'es}, Masson, Paris, 1988.
	
	\bibitem{LLZ}
	A. L\'opez, X. Zhang and E. Zuazua,
  Null controllability of the heat equation as singular limit of the exact controllability of dissipative wave equations,
  J. Math. Pures Appl. 79 (2000) 741--808.

  \bibitem{MMP}
	M. Marcus, V. Mizel and Y. Pinchover,
  On the best constant for Hardy's inequality in $\R^n$,
  Trans. Amer. Math. Soc. 350 (1998) 3237--3255. 
	
	\bibitem{MV}
	P. Martinez and J. Vancostenoble,
	Carleman estimates for one-dimensional degenerate heat equations,
	J. Evol. Equ. 6 (2006) 325--362.

\bibitem{MM}
R. Mazzeo and R. Melrose,
Meromorphic extension of the resolvent on complete spaces with asymptotically constant negative curvature,
J. Funct. Anal. 75 (1987) 260--310.
  
  \bibitem{MZuazua}
  S. Micu and E. Zuazua,
  \emph{An introduction to the controllability of linear PDE},
  Quelques questions de th\'eorie du contr\^ole (T. Sari, ed.),
  Collection Travaux en Cours, Hermann,
  (2005) 69--157.
	
	\bibitem{ArickS}
	A. Shao,
	On Carleman and observability estimates for wave equations on time-dependent domains,
	Proc. London Math. Soc. 119 (2019) 998--1064.

  \bibitem{Tataru_Thesis}
  D. Tataru,
  A priori pseudoconvexity energy estimates in domains with boundary and applications to exact boundary controllability for conservative partial differential equations,
  PhD Thesis, University of Virginia,
  May 1992.

  \bibitem{Tataru0}
  D. Tataru,
  A-priori estimates of Carleman's type in domains with boundaries,
  J. Math. Pures Appl. 73 (1994) 355--387.

	\bibitem{Tataru1}
	D. Tataru, Unique continuation for solutions to PDE’s: between
	H\"{o}rmander's theorem and Holmgren's theorem, Comm. Partial
	Differential Equations. 20 (1995) 855--884.
	
	\bibitem{VZuazua}
	J. Vancostenoble and E. Zuazua,
	Null controllability for the heat equation with singular inverse-square potential,
	J. Funct. Anal. 254 (2008) 1864--1902.
	
	\bibitem{Vazquez}
	J. L. V\'azquez, Fundamental solution and long time behavior of the porous medium equation in hyperbolic space,
  J. Math. Pures Appl. 104 (2015) 454--484.
	
	\bibitem{VaZuazua}
	J. L. V\'azquez and E. Zuazua, The Hardy inequality and the asymptotic behaviour of the heat equation with an inverse-square potential,
  J. Funct. Anal. 173 (2000) 103--153.

  \bibitem{Warnick}
  C. M. Warnick,
  The massive wave equation in asymptotically AdS spacetimes,
  Commun. Math. Phys. 321 (2013) 85--111.
	
	
\end{thebibliography}
\end{document}